\newcommand{\N}{\mathbb{N}}
\newcommand{\Z}{\mathbb{Z}}
\newcommand{\R}{\mathbb{R}}
\newcommand{\C}{\mathbb{C}}
\newcommand{\Sp}{\overleftarrow{\mathcal{S}}}
\newcommand{\Si}{\overrightarrow{\mathcal{S}}}
\newcommand{\Sb}{\mathcal{S}}
\newcommand{\HH}{\mathbb{H}}
\newcommand{\dx}{{\rm d}x }
\newcommand{\dt}{{\rm d}t }
\newcommand{\csn}{\operatorname{csn}}
\newcommand{\beq}{\begin{eqnarray}}
\newcommand{\eeq}{\end{eqnarray}}
\newcommand{\beqs}{\begin{eqnarray*}}
\newcommand{\eeqs}{\end{eqnarray*}}
\newtheorem{theorem}{Theorem}[section]
\newtheorem{proposition}[theorem]{Proposition}
\newtheorem{lemma}[theorem]{Lemma}
\theoremstyle{definition}
\newtheorem{example}[theorem]{Example}
\theoremstyle{remark}
\newtheorem{remark}[theorem]{Remark}
\numberwithin{equation}{section}
\begin{document}
\title[The Stieltjes moment problem in Gelfand-Shilov spaces]{Solution to the Stieltjes moment problem in Gelfand-Shilov spaces}
\author[A. Debrouwere]{Andreas Debrouwere}
\address{Department of Mathematics: Analysis, Logic and Discrete Mathematics, Ghent University, Krijgslaan 281, 9000 Gent, Belgium}
\email{Andreas.Debrouwere@UGent.be}
\thanks{The author is supported by  FWO-Vlaanderen, via the postdoctoral grant 12T0519N}

\subjclass[2010]{30E05, 44A60, 46E05}
\keywords{The Stieltjes moment problem, Gelfand-Shilov spaces, the Borel-Ritt problem, spaces of ultraholomorphic functions on the upper half-plane, continuous linear right inverses}
\begin{abstract}
We characterize the surjectivity and the existence of a continuous linear right inverse of the Stieltjes moment mapping on Gelfand-Shilov spaces, both of Beurling and Roumieu type, in terms of their defining weight sequence. As a corollary, we obtain some new results about the Borel-Ritt problem in spaces of ultraholomorphic functions on the upper half-plane.
\end{abstract}

\maketitle
\section{Introduction}
In 1939, Boas \cite{Boas} and P\'olya \cite{Polya} independently showed  that for every sequence $(a_p)_{p \in \N}$ of complex numbers there is a function $F$ of bounded variation such that
$$
\int_0^\infty x^p {\rm d} F(x) =a_p,\qquad p \in \N.
$$
 A. J. Dur\'an \cite{Duran} (see also \cite{D-E}) improved this result in 1989 by showing that for every sequence  $(a_p)_{p \in \N}$ of complex numbers the infinite system of linear equations
\begin{equation}
\int_0^\infty x^p \varphi(x) \dx =a_p,\qquad p \in \N,
\label{moment-problem}
\end{equation}
admits a solution $\varphi \in \mathcal{S}(0,\infty)$ (= the space of rapidly decreasing smooth functions with support in $[0,\infty)$).  Over the past 20 years, various authors studied the \emph{(unrestricted) Stieltjes moment problem \eqref{moment-problem}} in the context of Gelfand-Shilov spaces \cite{G-S}; see \cite{C-K-Y, C-C-K,L-S08, L-S09, D-J-S}. In this article, we provide a complete solution to this problem. 

In order to be able to discuss our results, we need to introduce some notation; see Section \ref{sect-prel} for unexplained notions concerning weight sequences.
Let $(M_p)_{p \in \N}$ be a weight sequence. We define $\Sb_{(M_p)}(0,\infty)$ as the space consisting of all $\varphi \in \mathcal{S}(0,\infty)$ such that 
\begin{equation}
\sup_{p\in \N} \sup_{x \geq 0} \frac{h^px^p|\varphi^{(n)}(x)|}{M_p} < \infty
\label{norm}
\end{equation}
for all $h > 0$ and $n \in \N$. Similarly, we define $\Sb_{\{M_p\}}(0,\infty)$ as the space consisting of all  $\varphi \in \mathcal{S}(0,\infty)$ such that there is $h > 0$ for which \eqref{norm} holds for all $n \in \N$. $\Sb_{(M_p)}(0,\infty)$ and $\Sb_{\{M_p\}}(0,\infty)$ are endowed with their natural Fr\'echet space and $(LF)$-space topology, respectively. Next, we define $\Lambda_{(M_p)}$ and $\Lambda_{\{M_p\}}$ as the sequence spaces consisting of all $a = (a_p)_{p \in \N} \in \C^\N$ such that 
$$
\sup_{p \in \N} \frac{h^p|a_p|}{M_p} < \infty
$$ 
for all $h > 0$ and some $h > 0$, respectively. $\Lambda_{(M_p)}$ and $\Lambda_{\{M_p\}}$ are endowed with their natural Fr\'echet space and $(LB)$-space topology, respectively. If $(M_p)_{p \in \N}$  satisfies $(\operatorname{dc})$, the \emph{Stieltjes moment mapping}
$$
\mathcal{M}: \Sb_{\ast}(0,\infty) \rightarrow \Lambda_{\ast}: \varphi \rightarrow \left( \int_0^\infty x^p \varphi(x) \dx \right)_{p \in \N}
$$
is well-defined and continuous, where $\ast$ stands for either $(M_p)$ or $\{M_p\}$. Jim\'enez-Garrido, Sanz and the author \cite{D-J-S} characterized the surjectivity of  $\mathcal{M}: \Sb_{\{M_p\}}(0,\infty) \rightarrow \Lambda_{\{M_p\}}$ in the following way; see \cite{L-S09} for earlier work in this direction.

\begin{theorem}\label{prev-char}\cite[Thm.\ 3.5]{D-J-S}
Let $(M_p)_{p \in \N}$ be a weight sequence satisfying $\operatorname{(slc)}$ (= $(M_p/p!)_{p \in \N}$ satisfies $\operatorname{(lc)}$) and $\operatorname{(dc)}$. If the mapping $\mathcal{M}: \Sb_{\{M_p\}}(0,\infty) \rightarrow \Lambda_{\{M_p\}}$ is surjective, then $(M_p)_{p \in \N}$ satisfies 
\begin{itemize}
\item[$(\gamma_2)$] $\displaystyle \sup_{p \in \Z_+}\frac{(M_p/M_{p-1})^{1/2}}{p}\sum_{q = p}^\infty \frac{1}{(M_q/M_{q-1})^{1/2}}< \infty$.
\end{itemize}
If, in addition, $(M_p)_{p \in \N}$ satisfies $(\operatorname{mg})$, $(\gamma_2)$ implies that $\mathcal{M}: \Sb_{\{M_p\}}(0,\infty) \rightarrow \Lambda_{\{M_p\}}$ is surjective.
\end{theorem}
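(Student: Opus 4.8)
The plan is to transfer the problem, via the one-sided Fourier--Laplace transform, to an interpolation (Borel/jet) problem for holomorphic functions on the half-plane $\HH$, and then to recognise the resulting condition as $(\gamma_2)$. For $\varphi\in\Sb_{\{M_p\}}(0,\infty)$ set $(L\varphi)(z)=\int_0^\infty e^{izx}\varphi(x)\,\dx$ for $z\in\HH$. Since $\varphi$ and all its derivatives decay faster than $M_p(hx)^{-p}$ for every $p$ (some $h>0$), while $\varphi\in\mathcal S$ and $\varphi$ is flat at $0$ (being supported in $[0,\infty)$ and smooth on $\R$), $F:=L\varphi$ is holomorphic on $\HH$, extends smoothly to $\overline{\HH}$, is rapidly decreasing as $|z|\to\infty$ there, is flat at $\infty$ inside $\HH$, and satisfies the uniform bound $\sup_{z\in\HH}|F^{(p)}(z)|\le C A^p M_p$ for all $p$ (some $A,C>0$). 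I would first prove that $L$ is a topological isomorphism of $\Sb_{\{M_p\}}(0,\infty)$ onto the space $\mathcal H_{\{M_p\}}$ of all holomorphic functions on $\HH$ with exactly these properties, the asymmetry of $\Sb_{\{M_p\}}(0,\infty)$ (a weighted decay bound on $\varphi$ only, Schwartz-type control on its derivatives) being matched by an asymmetry of $\mathcal H_{\{M_p\}}$ (a near-$0$ part encoding the $(M_p)$-decay, a boundary-and-infinity part encoding $\varphi\in\mathcal S$). Since $F^{(p)}(0)=i^p\int_0^\infty x^p\varphi(x)\,\dx$, under $L$ — up to the trivial automorphism $(b_p)\mapsto(i^{-p}b_p)$ of $\Lambda_{\{M_p\}}$ — the moment mapping $\mathcal M$ becomes the Borel map $\mathcal B\colon\mathcal H_{\{M_p\}}\to\Lambda_{\{M_p\}}$, $F\mapsto(F^{(p)}(0))_{p\in\N}$; hence $\mathcal M$ is surjective if and only if $\mathcal B$ is, and the half-plane Borel--Ritt statement is just the same correspondence read the other way.

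For the necessity of $(\gamma_2)$, suppose $\mathcal B$ is surjective. The spaces $\mathcal H_{\{M_p\}}$ and $\Lambda_{\{M_p\}}$ are an $(LF)$- and an $(LB)$-space, hence webbed and ultrabornological, so De Wilde's open mapping theorem and Grothendieck's factorization theorem yield constants $A\ge B>0$ and $C>0$ such that every $a$ with $\sup_p|a_p|A^p/M_p\le1$ is the image under $\mathcal B$ of some $F\in\mathcal H_{\{M_p\}}$ with $\sup_{z\in\HH}|F^{(p)}(z)|\le CB^pM_p$ for all $p$. One then feeds $\mathcal B$ suitable extremal data: testing against the jets that are ``largest compatible with the bound $B^pM_p$'' and running the classical half-plane Cauchy-estimate / Phragm\'en--Lindel\"of argument from the theory of the Borel map on sectors shows that such an a priori estimate can hold only on a sector of opening at most $\pi$ times the growth index $\gamma(M_p)$ (in the sense of Sanz); for opening exactly $\pi$ this forces precisely $\sup_p\tfrac{(M_p/M_{p-1})^{1/2}}{p}\sum_{q\ge p}(M_q/M_{q-1})^{-1/2}<\infty$, the exponent $\tfrac12$ being the reciprocal of the opening parameter. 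Equivalently, I could invoke the known result (Thilliez; Jim\'enez-Garrido--Sanz) that surjectivity of the Borel map on a sector of opening $\pi\gamma$ forces $\gamma\le\gamma(M_p)$, together with the elementary equivalence $\gamma(M_p)>1\iff(\gamma_2)$. Only $(\operatorname{slc})$ and $(\operatorname{dc})$ enter here, as claimed.

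For the sufficiency, assume in addition $(\operatorname{mg})$. Then $(\operatorname{slc})+(\operatorname{mg})$ make $(M_p/p!)$ strongly regular, and $(\gamma_2)$ says that the opening $\pi$ of $\HH$ lies strictly below $\pi\gamma(M_p)$. In that regime a continuous linear right inverse of $\mathcal B$ can be built by a Borel--Ritt-type procedure: given $a\in\Lambda_{\{M_p\}}$, set $F(z)=\sum_{p\in\N}\frac{a_p}{p!}(iz)^p\,\chi(z/\rho_p)$, where $\chi$ is a fixed holomorphic, rapidly decreasing cut-off-type function on $\HH$ equal to $1$ near $0$ and the radii $\rho_p\asymp(M_{p-1}/M_p)^{1/2}$ (up to a factor polynomial in $p$) are calibrated to the quotients of $(M_p)$; the Cauchy estimates together with $(\operatorname{mg})$ bound every $F^{(n)}$ on $\HH$, the cut-offs leave the jet at $0$ unchanged, and $(\gamma_2)$ is exactly the summability condition making the series converge in $\mathcal H_{\{M_p\}}$. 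Transporting this operator back through $L^{-1}$ gives a continuous linear right inverse of $\mathcal M$, in particular surjectivity. Alternatively one can cite the Jim\'enez-Garrido--Sanz--Schindl theorem on right inverses of the Borel map on sectors of opening below $\pi\gamma(M_p)$.

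The main obstacle is the reduction in Step~1 — showing that $L$ maps \emph{onto} $\mathcal H_{\{M_p\}}$ and pinning down the correct two-sided (near-$0$ versus boundary-and-infinity) growth conditions, since $\Sb_{\{M_p\}}(0,\infty)$ is not a standard symmetric Gelfand--Shilov space. Downstream, the other delicate point is that the relevant domain has opening \emph{exactly} $\pi$, the borderline case of the sectorial theory: the usual statements are for openings strictly below $\pi\gamma(M_p)$, so they apply here precisely because $(\gamma_2)$ is the \emph{strict} inequality $\gamma(M_p)>1$, and one must check this borderline really is the sharp threshold — equivalently, that the cut-off construction above survives the decay requirements of $F$ along $\partial\HH$ and at $\infty$, which is where $(\operatorname{mg})$, rather than mere $(\operatorname{dc})$, is actually used.
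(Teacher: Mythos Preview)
Your strategy is precisely the one used in \cite{D-J-S} to prove the cited theorem, and it is recapitulated in the present paper as Proposition~\ref{equivalences}. The paper, however, runs the logic in the opposite direction: it first proves the Stieltjes characterisation (Theorem~\ref{main-theorem}) by a new route that avoids the half-plane entirely, and only afterwards deduces the Borel--Ritt statement on~$\HH$ as a corollary. Concretely, the paper passes from $(M_p)$ to its $2$-interpolating sequence $(N_p)$, so that $(\gamma_2)$ for $(M_p)$ becomes $(\gamma_1)$ for $(N_p)$ (Lemma~\ref{2-interpolating}); via the change of variables $x\mapsto x^2$ it reduces $\mathcal{M}$ on $\mathcal{S}_{\{M_p\}}(0,\infty)$ to $\mathcal{M}$ on $\mathcal{S}^0_{\{N_p\}}(\R)$ (Proposition~\ref{reduction-1}); and then, taking Fourier transforms, it reduces the latter to the classical Borel problem $\mathcal{B}:\mathcal{D}^{\{N_p\}}_{[-1,1]}\to\Lambda_{\{N_p\}}$ and invokes Petzsche (Proposition~\ref{moment-flat}, Theorem~\ref{Borel-P}). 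This is what lets the paper dispense with $(\operatorname{slc})$ and $(\operatorname{mg})$: Petzsche's theorem needs only $(\operatorname{lc})$ and $(\gamma)$, whereas the sectorial Borel--Ritt machinery you cite (Thilliez, Jim\'enez-Garrido--Sanz--Schindl) is proved under strong regularity.

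One genuine simplification you should make: you do \emph{not} need $L$ to be a topological isomorphism onto a carefully tailored space $\mathcal{H}_{\{M_p\}}$, and attempting to prove this is indeed the obstacle you flag. In both \cite{D-J-S} and Proposition~\ref{equivalences} the reduction is asymmetric. One direction is your Laplace map $L:\mathcal{S}_{\{M_p\}}(0,\infty)\to\mathcal{A}^{\{M_p\}}(\HH)$, which is merely continuous and intertwines $\mathcal{M}$ with $\mathcal{B}$; this suffices to transport surjectivity (or a right inverse) from $\mathcal{M}$ to $\mathcal{B}$. For the other direction one does \emph{not} invert $L$: instead one multiplies a given $f\in\mathcal{A}^{\{M_p\}}(\HH)$ by a fixed holomorphic function $G$ on $\{\Im m\,z>-1\}$ that is nonvanishing and decays like $e^{-A(h|z|)}$; then $(fG)_{|\R}$ lies in $\mathcal{S}^{\{M_p\}}_{\{A_p\}}(\R)$ and is the Fourier transform of an element of $\mathcal{S}^{\{A_p\}}_{\{M_p\}}(0,\infty)$ by a Paley--Wiener-type argument (Lemma~\ref{Fourier-char-supp}). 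A Leibniz correction at the level of the sequence spaces undoes the effect of $G$ on the jet at $0$. With this device your Step~1 becomes routine, and the rest of your outline (citing \cite{Thilliez} and \cite{JG-S-S} for the sectorial characterisation) reproduces the original proof in \cite{D-J-S}.
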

\noindent Condition $(\gamma_2)$ means that $(M^{1/2}_p)_{p \in \N}$ is strongly non-quasianalytic \cite{Komatsu}. 
The main goal of this article is to improve and complete Theorem \ref{prev-char} in the following three ways: Consider the Beurling case as well; replace $\operatorname{(slc)}$ and $\operatorname{(mg)}$ by the weaker conditions $\operatorname{(lc)}$ and $\operatorname{(dc)}$; characterize the existence of a continuous linear right inverse of $\mathcal{M}: \Sb_{\ast}(0,\infty) \rightarrow \Lambda_{\ast}$. More precisely, we show the following result; see Theorem \ref{main-theorem}.
\begin{theorem}\label{main-theorem-intro} Let $(M_p)_{p \in \N}$ be a weight sequence satisfying $(\operatorname{lc})$ and $(\operatorname{dc})$.
\begin{itemize}
\item[$(a)$] The following statements are equivalent:
\begin{itemize}
\item[$(i)$] $\mathcal{M}: \mathcal{S}_{(M_p)}(0,\infty) \rightarrow \Lambda_{(M_p)}$ is surjective.
\item[$(ii)$] $\mathcal{M}: \mathcal{S}_{(M_p)}(0,\infty) \rightarrow \Lambda_{(M_p)}$ has a continuous linear right inverse.
\item[$(iii)$] $(M_p)_{p \in \N}$ satisfies $(\gamma_2)$.
\end{itemize}
\item[$(b)$] $\mathcal{M}: \Sb_{\{M_p\}}(0,\infty) \rightarrow \Lambda_{\{M_p\}}$ is surjective if and only if $(M_p)_{p \in \N}$ satisfies $(\gamma_2)$.
\item[$(c)$]  $\mathcal{M}: \Sb_{\{M_p\}}(0,\infty) \rightarrow \Lambda_{\{M_p\}}$ has a continuous linear right inverse if and only if $(M_p)_{p \in \N}$ satisfies $(\gamma_2)$ and 
\begin{itemize}
\item[$(\beta_2)$] $\displaystyle \forall \varepsilon > 0 \, \exists n \in \Z_+ \, : \, \limsup_{p \to \infty}  \left(\frac{M_{np}}{M_p}\right)^{\frac{1}{p(n-1)}} \frac{M_{np-1}}{M_{np}} \leq \varepsilon$.
\end{itemize}
\end{itemize}
\end{theorem}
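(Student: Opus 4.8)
The plan is to transfer the four assertions, via the Laplace transform, to the corresponding statements for the asymptotic Borel map on a space of ultraholomorphic functions on the upper half-plane $\HH$, and to prove the latter; as a by-product this yields the Borel--Ritt corollary announced in the abstract. Throughout, $\ast$ stands for either $(M_p)$ or $\{M_p\}$. For $\varphi \in \Sb_\ast(0,\infty)$ put $(\mathcal L\varphi)(z) = \int_0^\infty e^{izx}\varphi(x)\dx$. Since $\varphi$ is flat at $0$ (it is smooth on $\R$ and supported in $[0,\infty)$) and rapidly decreasing, $\mathcal L\varphi$ is holomorphic on $\HH$, extends to a rapidly decreasing smooth function on $\overline{\HH}$, and $(\mathcal L\varphi)^{(p)}(0)=i^p\int_0^\infty x^p\varphi(x)\dx$. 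Using $(\operatorname{lc})$ and $(\operatorname{dc})$ I would show that $\mathcal L$ is a topological isomorphism of $\Sb_\ast(0,\infty)$ onto the space $\mathcal A_\ast(\HH)$ of holomorphic functions $f$ on $\HH$ extending to rapidly decreasing smooth functions on $\overline{\HH}$ with $\sup_{z\in\overline{\HH}} h^p|f^{(p)}(z)|/M_p<\infty$ for all $h>0$ (resp.\ for some $h>0$); surjectivity of $\mathcal L$ comes from Fourier inversion together with a contour-shift argument in $\HH$ that delivers both holomorphy and the support condition. Under $\mathcal L$ the map $\mathcal M$ is conjugate, up to the unimodular factors $i^p$, to the asymptotic Borel map $\mathcal B\colon \mathcal A_\ast(\HH)\to\Lambda_\ast$, $f\mapsto (f^{(p)}(0))_{p\in\N}$ (here $(\operatorname{dc})$ ensures $\mathcal B$ maps into $\Lambda_\ast$, just as for the well-definedness of $\mathcal M$), so each statement in $(a)$--$(c)$ reduces to the corresponding statement for $\mathcal B$.

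Next I would prove sufficiency of $(\gamma_2)$. By the remark after Theorem~\ref{prev-char}, $(\gamma_2)$ says that $(M_p^{1/2})_p$ is strongly non-quasianalytic, which is precisely the growth needed to construct nontrivial flat functions in $\mathcal A_\ast(\HH)$ — the square root being forced by the opening $\pi$ of $\HH$ (equivalently, $w\mapsto w^2$ maps $\HH$ onto a sector of opening $2\pi$, so a flat function on $\HH$ behaves like $\exp(-\omega_{M^{1/2}}(c/|z|))$). Starting from an optimal flat function built this way, I would set up a Borel--Laplace type integral operator: given $(a_p)\in\Lambda_\ast$, form a generalized formal Borel transform, sum it (convergence being guaranteed by the size of the sequence), and apply a truncated Laplace transform against a kernel adapted to $(M_p)$; this produces $f\in\mathcal A_\ast(\HH)$ with $\mathcal B f=(a_p)_p$ depending linearly and continuously on $(a_p)$, which gives at once the surjectivity in $(b)$ and the continuous linear right inverse in $(a)(ii)$. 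The main obstacle — and the point where the argument must improve on \cite[Thm.\ 3.5]{D-J-S} — is to run this construction assuming only $(\operatorname{lc})+(\operatorname{dc})$ rather than $(\operatorname{slc})$ and $(\operatorname{mg})$; I expect this to require working throughout with the associated function $\omega_M$ and the optimal-flat-function machinery in place of the multi-index splittings for which moderate growth is convenient.

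For necessity, suppose $\mathcal B$ (equivalently $\mathcal M$) is surjective. Then $\mathcal A_\ast(\HH)$ must contain functions with prescribed, extremally growing Taylor coefficients at $0$; feeding in the extremal data $a_p\asymp M_p$ and estimating the corresponding $f$ on $\overline{\HH}$ by a Phragm\'en--Lindel\"of / harmonic majorant argument, one reads off that $(M_p^{1/2})_p$ is strongly non-quasianalytic, i.e.\ $(\gamma_2)$. I would arrange this so that only $(\operatorname{lc})$ (for the monotonicity of $M_p/M_{p-1}$ and the duality $M_p\leftrightarrow\omega_M$) and $(\operatorname{dc})$ are used, replacing the appeal to $(\operatorname{slc})$ in \cite[Thm.\ 3.5]{D-J-S} by direct estimates on the quotients $M_p/M_{p-1}$. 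Together with the previous paragraph this closes the cycle $(iii)\Rightarrow(ii)\Rightarrow(i)\Rightarrow(iii)$ in $(a)$ (the middle implication being trivial) and proves $(b)$.

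Finally, for $(c)$: a continuous linear right inverse of $\mathcal M$ is in particular a section, so $(\gamma_2)$ is necessary by $(b)$; the new ingredient is $(\beta_2)$. In the Roumieu case the spaces are $(LF)$- resp.\ $(LB)$-spaces, and a surjection between such spaces need not admit a continuous linear right inverse; the obstruction is controlled by how the index $h$ in $\mathcal A_{\{M_p\}}(\HH)$ must grow relative to that of $\Lambda_{\{M_p\}}$, and quantifying this through the abstract criteria for right inverses of surjections between $(LF)$-spaces, combined with the flat-function bounds from the sufficiency step, translates precisely into $(\beta_2)$ — both for its necessity and, after refining the integral operator so that $(\beta_2)$ governs the loss of index, for its sufficiency. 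I expect this balancing of the inductive-limit bookkeeping against the flat-function estimates to be, alongside the $(\operatorname{mg})$-free construction, the most delicate part of the proof.
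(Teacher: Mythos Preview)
Your plan is precisely the route the paper is written to avoid. The introduction states that the reduction to the Borel--Ritt problem on $\HH$ was the method of \cite{D-J-S}, and that it forces the extra hypotheses $(\operatorname{slc})$ and $(\operatorname{mg})$ because the known solutions of the Borel--Ritt problem (Thilliez, Schmets--Valdivia) need them. You acknowledge this obstacle and write that you ``expect'' to get around it by working with $\omega_M$ and optimal flat functions, but that is exactly where the direct approach breaks: the construction of a sectorially flat function $e^{-\omega_{M^{1/2}}(c/|z|)}$ with the right derivative bounds uses that $(M_p/p!)_p$ is log-convex (this is what $(\operatorname{slc})$ says), and the truncated Laplace operator that turns a formal series into an element of $\mathcal A_\ast(\HH)$ with controlled norms uses $(\operatorname{mg})$ to split $M_{p+q}$. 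Without a concrete substitute for these two steps, the sufficiency half of $(a)$ and $(b)$ is a restatement of the difficulty, not a proof. Your treatment of $(c)$ has the same character: invoking ``abstract criteria for right inverses of surjections between $(LF)$-spaces'' does not by itself produce $(\beta_2)$; one needs a specific mechanism linking the inductive step loss to the sequence, and you do not supply one.

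The paper's argument goes in the opposite direction. It never attacks the ultraholomorphic problem head-on. Instead it passes to the $2$-interpolating sequence $(N_p)$ of $(M_p)$ (so that $(\gamma_2)$ for $M$ becomes $(\gamma_1)$ for $N$, and $(\beta_2)$ is preserved), and reduces $\mathcal M$ on $\mathcal S_\ast(0,\infty)$ to $\mathcal M$ on $\mathcal S^0_\ast(\R)$ for the sequence $(N_p)$ via the substitutions $x\mapsto x^2$ and $x\mapsto x^{1/2}$ (Proposition~\ref{reduction-1}). On $\mathcal S^0_\ast(\R)$ the Fourier transform converts moments into derivatives at $0$, so one is facing the classical Borel problem for compactly supported ultradifferentiable functions of class $(N_p)$, which Petzsche solved under only $(\operatorname{lc})$ and $(\gamma)$; this is what eliminates $(\operatorname{slc})$ and $(\operatorname{mg})$. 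The continuous right inverse in the Beurling case, and the appearance of $(\beta_2)$ in the Roumieu case, come from an abstract lifting lemma (Proposition~\ref{functional-analysis}) combined with an Eidelheit-type surjectivity of $\mathcal B$ on the moment-flat subspace (Proposition~\ref{Eidelheit-Borel}) and Petzsche's Theorem~\ref{Borel-P}. The Borel--Ritt statement on $\HH$ is then obtained as a \emph{corollary} of the Stieltjes result (Proposition~\ref{equivalences}), not the other way around.
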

\noindent Condition $(\beta_2)$ is due to Petzsche \cite{Petzsche} and appears in his characterization of the existence of a continuous linear right inverse of the Borel mapping on spaces of  ultradifferentiable functions of Roumieu type. We also give an analogue of Theorem \ref{main-theorem-intro} for Gelfand-Shilov spaces of type $\mathcal{S}^\dagger_\ast(0,\infty)$ (cf.\ \cite[Thm.\ 3.5]{D-J-S}); see Theorem \ref{main-theorem-1}.

In \cite{D-J-S}, Theorem \ref{prev-char} is shown by reducing it to the Borel-Ritt problem \cite{Ramis, S-V,Thilliez, JG-S-S} in spaces of ultraholomorphic functions on the upper half-plane and then using  solutions to this problem from \cite{Thilliez, JG-S-S}; a technique that goes back to A. L.  Dur\'an and Estrada \cite{D-E}. Up until now,  this  seems to be the only known method to study the Stieltjes moment problem in  Gelfand-Shilov spaces. It also explains why we had to assume $\operatorname{(slc)}$ and $\operatorname{(mg)}$ in Theorem \ref{prev-char}: These conditions are needed to solve the Borel-Ritt problem in spaces of ultraholomorphic functions \cite[Thm.\ 3.2.1]{Thilliez}. We develop here a completely new approach. Namely, we show Theorem \ref{main-theorem-intro}  by reducing it to the Borel problem in spaces of ultradifferentiable functions of class $(N_p)_{p \in \N}$, where $(N_p)_{p \in \N}$ denotes the $2$-interpolating sequence associated to $(M_p)_{p \in \N}$ \cite{S-V}, and then using  Petzsche's  classical solution to this problem \cite{Petzsche}.

As a corollary, we obtain an analogue of Theorem \ref{main-theorem-intro} for the Borel-Ritt problem in spaces of ultraholomorphic functions on the upper half-plane, thereby improving some results of Schmets and Valdivia  \cite{S-V} and Thilliez \cite{Thilliez} in the particular case of the upper half-plane; see Theorem \ref{main-theorem-2} and Remark \ref{details}. Of course, due to the distinct geometry of the upper half-plane, this special case is much simpler to handle than the Borel-Ritt problem in spaces of ultraholomorphic functions on general sectors.

The plan of this article is as follows. In Section \ref{sect-prel}, we fix the notation, introduce weight sequences and recall Petzsche's solution to the Borel problem in spaces of ultradifferentiable functions. In Section \ref{sect-GS}, we define  Gelfand-Shilov spaces of type $\mathcal{S}_\ast$ and collect several properties of these spaces that will be used later on. Next, in the auxiliary Sections \ref{sect-FA} and \ref{sub-Eidelheit}, we present an abstract result about the existence of a continuous linear right inverse and prove a Borel type theorem. These results are used in the proof of Theorem \ref{main-theorem-intro}, which is  given in Section \ref{sect-main}. Finally, in Section \ref{sect-cor}, we consider the Stieltjes moment problem in Gelfand-Shilov spaces of type $\mathcal{S}^\dagger_\ast(0,\infty)$ and the Borel-Ritt problem in spaces of ultraholomorphic functions on the upper half-plane.

\section{Preliminaries}\label{sect-prel}
\subsection{Notation} 

We set $\N = \{0,1,2, \ldots\}$ and $\Z_+ = \{1,2, \ldots \}$. The Fr\'echet space of rapidly decreasing smooth functions on $\R$ is denoted by $\mathcal{S}(\R)$. We fix the constants in the Fourier transform as follows
$$
\mathcal{F}(\varphi)(\xi) = \widehat{\varphi}(\xi) :=  \int_{-\infty}^\infty \varphi(x) e^{ix\xi} \dx, \qquad \varphi \in \mathcal{S}(\R).
$$
The $p$-th moment, $p \in \N$, of an element $\varphi \in \mathcal{S}(\R)$ is given by
$$
\mu_p(\varphi) := \int_{-\infty}^\infty x^p \varphi(x)  \dx.
$$
Notice that $\widehat{\varphi}^{(p)}(0) = i^p \mu_p(\varphi)$ for all $p \in \N$.  

We define the \emph{Borel mapping}  as
$$
\mathcal{B}: C^\infty(\R) \rightarrow \C^\N: \varphi \rightarrow (\varphi^{(p)}(0))_{p \in \N}
$$
and the \emph{Stieltjes moment mapping} as
$$
\mathcal{M}: \mathcal{S}(\R) \rightarrow \C^\N: \varphi \rightarrow (\mu_p(\varphi))_{p \in \N}.
$$

A lcHs (= locally convex Hausdorff space) $E$ is said to be an \emph{$(LF)$-space} if there is a sequence $(E_n)_{n \in \N}$ of Fr\'echet spaces with $E_n \subseteq E_{n + 1}$ and continuous inclusion mappings for all $n \in \N$ such that $E = \bigcup_{n \in \N} E_n$  and  the topology of $E$ coincides with the finest locally convex topology such that all the inclusion mappings $E_n \rightarrow E$, $n \in \N$, are continuous. We write $E = \varinjlim_{n \in \N} E_n$. If the sequence $(E_n)_{n \in \N}$ consists of Banach spaces, $E$ is called an \emph{$(LB)$-space}. Finally, a lcHs is said to be a \emph{(PLB)}-space if it can be written as the projective limit of a countable spectrum of $(LB)$-spaces.

\subsection{Weight sequences}
A sequence $(M_p)_{p \in \N}$ of positive numbers is called  a \emph{weight sequence} if  $M_0 =1$ and $m_p := M_p/M_{p-1} \rightarrow \infty$ as $p \to \infty$. The \emph{associated function} of a weight sequence $(M_p)_{p \in \N}$ is defined as $M(0):=0$ and
$$
M(t):=\sup_{p\in\N}\log\frac{t^p}{M_p},\qquad t > 0.
$$
We will make use of the following conditions on weight sequences:
\begin{enumerate}
\item[$(\operatorname{lc})$] \emph{(log-convexity)} $M_p^2 \leq M_{p-1}M_{p+1}$, $p \in \Z_+$.
\item[$(\operatorname{dc})$] \emph{(derivation-closedness)} $M_{p+1} \leq C_0H^{p+1}M_p$, $p \in \N$, for some $C_0,H \geq 1$.
\item[$(\operatorname{mg})$] \emph{(moderate growth)} $M_{p+q} \leq C_0H^{p+q}M_pM_q$, $p,q \in \N$, for some $C_0,H \geq 1$.
\item[$(\gamma)$] \emph{(non-quasianalyticity)} $\displaystyle \sum_{p = 1}^\infty \frac{1}{m_p}< \infty$.
\item[$(\gamma_1)$] \emph{(strong non-quasianalyticity)} $\displaystyle \sup_{p \in \Z_+}\frac{m_p}{p}\sum_{q = p}^\infty \frac{1}{m_q}< \infty$.
\item[$(\gamma_2)$] $\displaystyle \sup_{p \in \Z_+}\frac{m_p^{1/2}}{p}\sum_{q = p}^\infty \frac{1}{m_q^{1/2}}< \infty$.
\item[$(\beta_2)$]  $\displaystyle \forall \varepsilon > 0 \, \exists n \in \Z_+ \, : \, \limsup_{p \to \infty}  \left(\frac{M_{np}}{M_p}\right)^{\frac{1}{p(n-1)}} \frac{1}{m_{np}} \leq \varepsilon$.
\end{enumerate}
Clearly, $(\operatorname{mg}) \Rightarrow (\operatorname{dc})$ and $(\gamma_1) \Rightarrow (\gamma)$. Moreover, if $(M_p)_{p \in \N}$ satisfies $(\operatorname{lc})$, then $(\gamma_2) \Rightarrow (\gamma_1)$. The conditions $(\operatorname{lc})$, $(\operatorname{dc})$, $(\operatorname{mg})$, $(\gamma)$ and $(\gamma_1)$  are standard in the theory of ultradifferentiable functions  and their meaning is well explained in the classical work of Komatsu \cite{Komatsu}. Conditions $(\gamma_1)$ and $(\gamma_2)$ are particular instances of 
\begin{enumerate}
\item[$(\gamma_r)$] $\displaystyle \sup_{p \in \Z_+}\frac{m^{1/r}_p}{p}\sum_{q = p}^\infty \frac{1}{m^{1/r}_q}< \infty, \qquad r > 0$. 
\end{enumerate}
Condition $(\gamma_r)$ means that $(M^{1/r}_p)_{p \in \N}$ satisfies $(\gamma_1)$. These conditions, which were introduced by Schmets and Valdivia \cite{S-V}  for $r \in \N$ and by Thilliez \cite{Thilliez} for arbitrary $r > 0$, play an important role in the study of the Borel-Ritt problem in spaces of ultraholomorphic functions \cite{Ramis, S-V, Thilliez, JG-S-S}. Condition $(\beta_2)$ is due to Petzsche \cite{Petzsche} and appears in his characterization of the existence of a continuous linear right inverse of the Borel mapping on spaces of  ultradifferentiable functions of Roumieu type;  see Theorem \ref{Borel-P} below.

\begin{remark} Consider
\begin{itemize}
\item[$(\beta^0_2)$] $\displaystyle \exists n \in \Z_+ \, : \, \lim_{p \to \infty} \frac{m_{np}}{m_p} = \infty$.
\item[$(\beta^1_2)$] $\displaystyle \lim_{p \to \infty} \frac{M_p^{1/p}}{m_p} = 0$.
\end{itemize}
Petzsche has shown that $(\beta^0_2) \Rightarrow (\beta_2) \Rightarrow (\beta^1_2)$ \cite[Prop.\ 1.5(b) and Prop.\ 1.6(a)]{Petzsche} and that the converse implications are false in general \cite[Example 1.8]{Petzsche}. However, $(\beta^0_2)$ and $(\beta_2)$ are equivalent within the class of weight sequences $(M_p)_{p \in \N}$ satisfying the following mild regularity condition: There is $n \in \Z_+$ such that the set of finite limit points of the set $\{ m_{n^l}/ m_{n^{l-1}} \, | \, l \in \Z_+\}$ is bounded  \cite[Prop.\ 1.6(b)]{Petzsche}. 
\end{remark}

\begin{example}
$(i)$ The Gevrey sequence $(p!^\alpha)_{p \in \N}$, $\alpha > 0$,  satisfies $(\operatorname{lc})$ and $(\operatorname{mg})$; it satisfies $(\gamma_r)$ if and only if  $\alpha > r$; it does not satisfy $(\beta^1_2)$ and, thus, also not $(\beta_2)$.

\noindent $(ii)$ The $q$-Gevrey sequence $(q^{p^2})_{p \in \N}$, $q >1$, satisfies $(\operatorname{lc})$ and $(\operatorname{dc})$ but not $(\operatorname{mg})$; it satisfies $(\gamma_r) > 0$ for all $r > 0$; it satisfies $(\beta^0_2)$ and, thus, also $(\beta_2)$.
\end{example}

Following \cite{S-V}, we define the \emph{$2$-interpolating sequence} $(N_p)_{p \in \N}$ associated to a weight sequence $(M_p)_{p \in \N}$ as 
$$
N_p := \left\{
	\begin{array}{ll}
		M_{q}, &  \mbox{$p = 2q$, $q \in \N$}, \\ \\
		(M_qM_{q+1})^{1/2},  &    \mbox{$p = 2q+1$, $q \in \N$}.
	\end{array}
\right. 
$$
\begin{lemma}\label{2-interpolating}  \cite[Lemma 2.3]{S-V}
Let $(M_p)_{p \in \N}$ be a weight sequence satisfying $(\operatorname{lc})$. Denote by $(N_p)_{p \in \N}$ its $2$-interpolating sequence. Then, $(N_p)_{p \in \N}$ is a weight sequence satisfying $(\operatorname{lc})$. Moreover, the following statements hold:
\begin{itemize}
\item[$(a)$] $(M_p)_{p \in \N}$ satisfies $(\operatorname{dc})$ if and only if $(N_p)_{p \in \N}$ does so.
\item[$(b)$] $(M_p)_{p \in \N}$ satisfies $(\gamma_2)$ if and only if $(N_p)_{p \in \N}$ satisfies $(\gamma_1)$.
\item[$(c)$] $(M_p)_{p \in \N}$ satisfies $(\beta_2)$ if and only if $(N_p)_{p \in \N}$ does so.
\end{itemize}
\end{lemma}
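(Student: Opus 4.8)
The plan is to establish the four assertions in turn; the only one needing real work is (c). Throughout write $m_p=M_p/M_{p-1}$ and $n_p=N_p/N_{p-1}$. A direct computation from the definition of $(N_p)$ gives $n_{2q-1}=n_{2q}=m_q^{1/2}$ for all $q\in\Z_+$, so $(n_p)_{p\ge 1}$ is $(m_q^{1/2})_{q\ge1}$ with every entry written twice and $n_P=m_{\lceil P/2\rceil}^{1/2}$. Since $m_q\to\infty$ we get $n_p\to\infty$, whence $(N_p)$ is a weight sequence; and $(\operatorname{lc})$ for $(M_p)$ --- equivalently, $(m_q)$ nondecreasing --- holds iff $(n_p)$ is nondecreasing, i.e.\ iff $(N_p)$ satisfies $(\operatorname{lc})$. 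Part (a) then follows because $(\operatorname{dc})$ is equivalent to an estimate $m_p\le AH^p$ ($A,H\ge1$), and through $m_q=n_{2q}^2=n_{2q-1}^2$ such an estimate for $(m_q)$ is equivalent to one for $(n_p)$ (with $H$ replaced by $H^{1/2}$, resp.\ $H^2$).

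For (b), splitting the tail $\sum_{q\ge P}n_q^{-1}$ into consecutive pairs and using $n_{2q-1}=n_{2q}=m_q^{1/2}$ gives
\[
\sum_{q\ge\lceil P/2\rceil}\frac{1}{m_q^{1/2}}\ \le\ \sum_{q\ge P}\frac{1}{n_q}\ \le\ 3\sum_{q\ge\lceil P/2\rceil}\frac{1}{m_q^{1/2}}\qquad(P\in\Z_+).
\]
Together with $n_P=m_{\lceil P/2\rceil}^{1/2}$ and $\tfrac1P\le\tfrac1{\lceil P/2\rceil}\le\tfrac2P$, this shows $\tfrac{n_P}{P}\sum_{q\ge P}\tfrac1{n_q}$ and $\tfrac{m_j^{1/2}}{j}\sum_{q\ge j}m_q^{-1/2}$ (with $j=\lceil P/2\rceil$) are comparable up to absolute constants; since $P\mapsto\lceil P/2\rceil$ maps $\Z_+$ onto $\Z_+$, taking suprema proves $(\gamma_1)$ for $(N_p)\iff(\gamma_2)$ for $(M_p)$. (Both conditions force $\sum_q m_q^{-1/2}<\infty$, so divergence is not an issue.)

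For (c) I would write $f(n,p):=(M_{np}/M_p)^{1/(p(n-1))}m_{np}^{-1}$ and $g(\nu,P)$ for the analogous quantity for $(N_p)$, where $n,\nu\ge2$. By $(\operatorname{lc})$, $(M_{np}/M_p)^{1/(p(n-1))}$ is the geometric mean of $m_{p+1},\dots,m_{np}$, so $f,g\in(0,1]$, and $(\beta_2)$ for $(M_p)$ (resp.\ $(N_p)$) reads: $\forall\varepsilon>0\ \exists n\ge2:\ \limsup_p f(n,p)\le\varepsilon$ (resp.\ with $g$). The computations I need are: (i) for even argument, pairing $n_{2a+1},\dots,n_{2\nu a}$ via $n_{2l-1}n_{2l}=m_l$ gives $g(\nu,2a)=(M_{\nu a}/M_a)^{1/(2a(\nu-1))}m_{\nu a}^{-1/2}$, which squares to $f(\nu,a)$; and (ii) for odd $P=2a+1$ and even $\nu=2n$, putting $b:=nP$ so that $\nu P=2b$ and $n_{\nu P}=m_b^{1/2}$, isolating the leftover $n_{2a+2}=m_{a+1}^{1/2}$ and pairing the rest yields $g(2n,P)^2=\bigl(m_{a+1}^{1/2}M_b/M_{a+1}\bigr)^{1/(b-a-1/2)}m_b^{-1}$; since $(m_l)$ is nondecreasing, the bracketed factor --- a weighted geometric mean of $m_{a+1},\dots,m_b$ with $m_{a+1}$ carrying weight $\tfrac12$ --- is at most the geometric mean of $m_{a+2},\dots,m_b$, which in turn is at most that of $m_{2a+2},\dots,m_b=(M_b/M_P)^{1/((n-1)P)}$ (discarding the smallest entries only increases a geometric mean), and since $m_b=m_{nP}$ this gives $g(2n,P)^2\le f(n,P)$. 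Finally, $f$ is almost monotone in its first slot: factoring $(M_{(n+1)p}/M_p)^{1/(np)}=\bigl[(M_{np}/M_p)^{1/((n-1)p)}\bigr]^{(n-1)/n}\bigl[(M_{(n+1)p}/M_{np})^{1/p}\bigr]^{1/n}$ and using $m_{np}\le m_{(n+1)p}$ gives $f(n+1,p)\le f(n,p)^{(n-1)/n}$, hence $f(2n,p)\le f(n,p)^{(n-1)/(2n-1)}\le f(n,p)^{1/3}$ for $n\ge2$.

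With (i), (ii) and this monotonicity in hand the equivalence is immediate. If $(N_p)$ has $(\beta_2)$: for $\varepsilon\le1$ pick $\nu$ with $\limsup_P g(\nu,P)\le\sqrt\varepsilon$; then $\limsup_a f(\nu,a)=\limsup_a g(\nu,2a)^2\le\varepsilon$, so $(M_p)$ has $(\beta_2)$. Conversely, if $(M_p)$ has $(\beta_2)$: for $\varepsilon\le1$ pick $n\ge2$ with $\limsup_p f(n,p)\le\varepsilon^6$; then for odd $P$ eventually $g(2n,P)^2\le f(n,P)\le\varepsilon^2$, and for even $P=2a$ eventually $g(2n,2a)^2=f(2n,a)\le f(n,a)^{1/3}\le\varepsilon^2$; hence $\limsup_P g(2n,P)\le\varepsilon$ and $(N_p)$ has $(\beta_2)$. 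I expect the odd-argument estimate (ii) to be the main obstacle: the tempting comparison of $g(2n,2a+1)$ with $f$ near the argument $a$ (suggested by $N_{2a}=M_a$) produces uncontrollable ratios $m_{cp}/m_{c'p}$, whereas the right observation is that $2n(2a+1)=2\cdot n(2a+1)$ makes $P=2a+1$ itself the correct argument on the $M$-side, after which the half-term $m_{a+1}^{1/2}$ and the index shift from $a+1$ to $2a+1$ are absorbed purely by monotonicity of $(m_l)$. Everything else is bookkeeping.
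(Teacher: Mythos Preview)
The paper does not supply its own proof of this lemma; it is quoted verbatim from \cite[Lemma 2.3]{S-V}. Your direct verification is correct and self-contained. The identity $n_{2q-1}=n_{2q}=m_q^{1/2}$ (equivalently $n_P=m_{\lceil P/2\rceil}^{1/2}$) is right, and from it the weight-sequence property, $(\operatorname{lc})$, part $(a)$, and part $(b)$ follow just as you say.

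For $(c)$, which is the only part with content, your computations check out. The even-argument identity $g(\nu,2a)^2=f(\nu,a)$ gives the implication $(N_p)\Rightarrow(M_p)$ immediately. For the converse, your decision to work with even $\nu=2n$ so that $\nu P$ is even (hence $N_{\nu P}=M_{nP}$) is exactly the right manoeuvre for odd $P=2a+1$: it makes $P$ itself the natural argument on the $M$-side, and the stray factor $m_{a+1}^{1/2}$ together with the index shift from $a+1$ to $2a+1$ are absorbed purely by monotonicity of $(m_l)$, yielding $g(2n,P)^2\le f(n,P)$. The almost-monotonicity $f(n+1,p)\le f(n,p)^{(n-1)/n}$ is correct (the geometric mean of $m_{np+1},\dots,m_{(n+1)p}$ is at most $m_{(n+1)p}\!$, and $m_{np}\le m_{(n+1)p}$), and iterating it to $f(2n,p)\le f(n,p)^{(n-1)/(2n-1)}\le f(n,p)^{1/3}$ then handles the even-$P$ case with the doubled index $2n$. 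The final assembly, choosing $n$ so that $\limsup_p f(n,p)\le\varepsilon^6$, is clean. One harmless remark: you tacitly restrict to $n,\nu\ge 2$ when reading $(\beta_2)$; this is the intended reading, since for $n=1$ the defining expression degenerates.
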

\subsection{The Borel problem in spaces of ultradifferentiable functions}
Let $(N_p)_{p \in \N}$ be a weight sequence. For $h > 0$ we define $\mathcal{D}^{N_p,h}_{[-1,1]}$ as the Banach space consisting of all $\varphi \in C^\infty(\R)$ with $\operatorname{supp} \varphi \subseteq [-1,1]$ such that
$$
\| \varphi \|_{\mathcal{D}^{N_p,h}_{[-1,1]}} := \sup_{p \in \N} \max_{x \in [-1,1]} \frac{h^p |\varphi^{(p)}(x)|}{M_p} < \infty.
$$ 
We set 
$$
\mathcal{D}^{(N_p)}_{[-1,1]} := \varprojlim_{h \to \infty} \mathcal{D}^{N_p,h}_{[-1,1]}, \qquad \mathcal{D}^{\{N_p\}}_{[-1,1]} := \varinjlim_{h \to 0^+} \mathcal{D}^{N_p,h}_{[-1,1]}.
$$
$\mathcal{D}^{(N_p)}_{[-1,1]}$ is a Fr\'echet space, while $\mathcal{D}^{\{N_p\}}_{[-1,1]}$ is an $(LB)$-space. If $(N_p)_{p \in \N}$ satisfies $(\operatorname{lc})$, the spaces $\mathcal{D}^{(N_p)}_{[-1,1]}$ and $\mathcal{D}^{\{N_p\}}_{[-1,1]}$ are non-trivial if and only if $(N_p)_{p \in \N}$ satisfies $(\gamma)$, as follows from the Denjoy-Carleman theorem.

For $h > 0$ we define $\Lambda_{N_p,h}$ as the Banach space consisting of all sequences $a = (a_p)_{p \in \N} \in \C^\N$ such that 
$$
\| a \|_{\Lambda_{N_p,h}} := \sup_{p \in \N} \frac{h^p|a_p|}{N_p} < \infty.
$$ 
We set
$$
\Lambda_{(N_p)} := \varprojlim_{h \to \infty}\Lambda_{N_p,h}, \qquad \Lambda_{\{N_p\}} := \varinjlim_{h \to 0^+}\Lambda_{N_p,h}.
$$
$\Lambda_{(N_p)}$ is a Fr\'echet space, while $\Lambda_{\{N_p\}}$ is an $(LB)$-space. The mappings 
$$
\mathcal{B}: \mathcal{D}^{(N_p)}_{[-1,1]}  \rightarrow \Lambda_{(N_p)}, \qquad \mathcal{B}: \mathcal{D}^{\{N_p\}}_{[-1,1]}   \rightarrow \Lambda_{\{N_p\}}
$$ 
are well-defined and continuous. Petzsche characterized the surjectivity and the existence of a continuous linear right inverse of these mappings in the following way.
\begin{theorem}  \label{Borel-P} Let $(N_p)_{p \in \N}$ be a weight sequence satisfying $(\operatorname{lc})$ and $(\gamma)$. 
\begin{itemize}
\item[$(a)$]  \emph{(\cite[Thm.\ 3.4]{Petzsche})} The following statements are equivalent:
\begin{itemize}
\item[$(i)$] $(N_p)_{p \in \N}$ satisfies $(\gamma_1)$.
\item[$(ii)$] $\mathcal{B}: \mathcal{D}^{(N_p)}_{[-1,1]}  \rightarrow \Lambda_{(N_p)}$
has a continuous linear right inverse.
\item[$(iii)$]  $\mathcal{B}: \mathcal{D}^{(N_p)}_{[-1,1]}  \rightarrow \Lambda_{(N_p)}$
is surjective.
\end{itemize}
\item[$(b)$]  \emph{(\cite[Thm.\ 3.5]{Petzsche})} $(N_p)_{p \in \N}$ satisfies $(\gamma_1)$ if and only if $\mathcal{B}: \mathcal{D}^{\{N_p\}}_{[-1,1]}  \rightarrow \Lambda_{\{N_p\}}$ is surjective.
\item[$(c)$]  \emph{(\cite[Thm.\ 3.1(a)]{Petzsche})}\footnote{As pointed out in \cite[p.\ 223]{S-V}, the statement of \cite[Thm.\ 3.1(a)]{Petzsche} contains a mistake, namely, one should read ``$(\gamma_1)$ and $(\beta_2)$" instead of ``$(\beta_2)$".}  $(N_p)_{p \in \N}$ satisfies $(\gamma_1)$ and $(\beta_2)$  if and only if $\mathcal{B}: \mathcal{D}^{\{N_p\}}_{[-1,1]}  \rightarrow \Lambda_{\{N_p\}}$ has a continuous linear right inverse.
\end{itemize}
\end{theorem}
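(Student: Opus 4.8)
The plan is to prove the three parts separately, the dividing line being that part (a) takes place in the Fr\'echet category while parts (b) and (c) take place in the (LB)/(DF) category. Throughout one uses that $(\gamma)$ makes the spaces non-trivial (Denjoy--Carleman) and that $(\operatorname{lc})$ allows one to pass freely between $(N_p)_{p\in\N}$ and its associated function $N(t)$, which is essentially the Legendre conjugate of the convex sequence $p\mapsto\log N_p$; abbreviate $n_p:=N_p/N_{p-1}$.

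For part (a), the implication $(ii)\Rightarrow(iii)$ is trivial. To obtain $(i)\Rightarrow(ii)$ I would build an explicit \emph{linear} Borel extension operator by Borel's cut-off device: fix $\theta\in C^\infty(\R)$ with $\operatorname{supp}\theta\subseteq[-1,1]$ and $\theta\equiv1$ near $0$, and for $a=(a_p)_{p\in\N}$ set $R(a)(x):=\sum_{p=0}^\infty\frac{a_p}{p!}x^p\theta(r_px)$ with radii $1\le r_p\uparrow\infty$ still to be chosen. A Leibniz computation, using that $\theta$ is locally constant near $0$, gives $R(a)^{(q)}(0)=a_q$, so $\mathcal{B}\circ R=\mathrm{id}$. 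The whole point is to choose the $r_p$ so that $R$ maps $\Lambda_{(N_p)}$ continuously into $\mathcal{D}^{(N_p)}_{[-1,1]}$; estimating the norm of $x^p\theta(r_p\,\cdot)$ in each Banach step $\mathcal{D}^{N_q,h}_{[-1,1]}$ by means of $N(t)$ shows that the tail of the series is controlled in \emph{every} step precisely when $n_q\sum_{p\ge q}1/n_p=O(q)$, that is, under $(\gamma_1)$. For $(iii)\Rightarrow(i)$, since $\mathcal{B}$ is a continuous surjection of Fr\'echet spaces, the open mapping theorem yields, for each $h$, constants $h'$ and $C_h$ such that every $a\in\Lambda_{(N_p)}$ admits a preimage $\varphi$ with $\|\varphi\|_{\mathcal{D}^{N_p,h}_{[-1,1]}}\le C_h\|a\|_{\Lambda_{N_p,h'}}$. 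Testing this on the extremal sequences $a=(0,\dots,0,N_k,0,\dots)$ and using the lower bound (again via $N(t)$) on the growth forced on a function with prescribed $k$-th derivative at $0$, a computation extracts exactly $(\gamma_1)$; the cleanest bookkeeping routes through its standard integral reformulation $\int_t^\infty\frac{N(s)}{s^2}\,{\rm d}s=O\!\left(\frac{N(t)}{t}\right)$, which the preimage estimate delivers directly.

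For parts (b) and (c): the ``if'' half of the surjectivity statement (b) re-uses the construction above, but now, given $a\in\Lambda_{N_p,h_0}$, one only needs $R(a)$ to converge in \emph{some} $\mathcal{D}^{N_p,h}_{[-1,1]}$; $(\gamma_1)$ still permits the required choice of $(r_p)$, although its dependence on $h_0$ is exactly what prevents this $R$ from being continuous and linear, so one claims surjectivity only. The ``only if'' half of (b): the target (LB)-space being ultrabornological, a Baire-category / factorization argument singles out one pair of Banach steps between which the solvability-with-estimate property holds, and then the computation from the necessity part of (a) re-runs to give $(\gamma_1)$. For (c) I would invoke the functional-analytic criterion for a continuous linear surjection of (DF)/(LB)-spaces to admit a continuous linear right inverse --- a splitting-type condition on $\ker\mathcal{B}$ --- which unwinds to a quantitative three-level interpolation estimate among the norms $\|\cdot\|_{\Lambda_{N_p,h}}$; translating that estimate back through $N(t)$ produces $(\gamma_1)$ together with Petzsche's self-similarity condition $(\beta_2)$ for $(N_p)$, in which the quotients $N_{np}/N_p$ and the ratios $N_{np-1}/N_{np}$ enter.

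The genuinely delicate step is part (c). One must first identify the correct abstract criterion for the existence of a continuous linear right inverse in this precise (LB)-setting and verify its hypotheses, and then --- the harder task --- translate the resulting interpolation condition faithfully into the elementary statement $(\beta_2)$, which is where the exponent $1/(p(n-1))$, the quotients $N_{np}/N_p$ and the exact order of the quantifiers $\forall\varepsilon\,\exists n$ all come from. For the necessity direction one has to go the other way: starting from a failure of $(\beta_2)$, manufacture a concrete sequence in $\Lambda_{\{N_p\}}$ whose preimages cannot be selected continuously and linearly, which calls for a delicate resonance construction along the scales $p,np,n^2p,\dots$ --- this is the part I expect to be the real work.
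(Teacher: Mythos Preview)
The paper does not contain a proof of this theorem: it is stated as a citation of Petzsche's results \cite[Thm.\ 3.4, Thm.\ 3.5, Thm.\ 3.1(a)]{Petzsche} and is used as a black box throughout the rest of the article. There is therefore no ``paper's own proof'' against which to compare your attempt.

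That said, your outline is a reasonable high-level sketch of the strategy in Petzsche's original paper. A few remarks on accuracy: for $(i)\Rightarrow(ii)$ in part (a), the cut-off construction $R(a)(x)=\sum_p \frac{a_p}{p!}x^p\theta(r_p x)$ is indeed the right idea, and the link between the choice of radii $r_p$ and $(\gamma_1)$ is the heart of the matter, though the precise bookkeeping (which Petzsche routes through an auxiliary ``associated'' sequence rather than directly through $N(t)$) is more involved than you suggest. For $(iii)\Rightarrow(i)$ the open-mapping argument you describe is correct in spirit. For part (b) the necessity direction is more subtle than a generic Baire/factorization step: Petzsche's actual argument passes through a characterization of when the quotient $\mathcal{D}^{\{N_p\}}_{[-1,1]}/\ker\mathcal{B}$ has a specific topological property. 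For part (c) you are right that this is the genuinely hard part, and your description of the difficulty --- identifying the splitting criterion and then translating it into the arithmetic condition $(\beta_2)$ --- is accurate; Petzsche's proof of the equivalence with $(\beta_2)$ is indeed lengthy and relies on a delicate analysis of the sequence along geometric scales, essentially as you anticipate.

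In short: nothing to compare here, but your sketch is a fair approximation of Petzsche's line of argument, with the caveat that parts (b) and especially (c) hide substantially more work than your outline acknowledges.
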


\section{Gelfand-Shilov spaces of type $\mathcal{S}_\ast$}\label{sect-GS}
Let $(M_p)_{p \in \N}$ be a weight sequence. For $n \in \N$ and $h > 0$ we write $\mathcal{S}^n_{M_p,h}(\R)$ for the Banach space consisting of all $\varphi \in C^{n}(\R)$ such that
$$
\| \varphi\|_{\mathcal{S}^n_{M_p,h}} := \max_{m \leq n} \sup_{p \in \N} \sup_{x \in \R} \frac{h^p |x^p\varphi^{(m)}(x)|}{M_p} < \infty.
$$
Notice that
$$
\| \varphi\|_{\mathcal{S}^n_{M_p,h}} = \max_{m \leq n} \sup_{x \in \R} |\varphi^{(m)}(x)|e^{M(h|x|)}, \qquad \varphi \in \mathcal{S}^n_{M_p,h}(\R).
$$
We set
\begin{gather*}
\mathcal{S}_{(M_p)}(\R) := \varprojlim_{n \to \infty}\mathcal{S}^n_{M_p,n}(\R), \\
\Si_{\{M_p\}}(\R) := \varinjlim_{h \to 0^+}\varprojlim_{n \to \infty}\mathcal{S}^n_{M_p,h}(\R),\\
\Sp_{\{M_p\}}(\R) := \varprojlim_{n \to \infty}\varinjlim_{h \to 0^+}\mathcal{S}^n_{M_p,h}(\R).
\end{gather*}
$\mathcal{S}_{(M_p)}(\R)$ is a Fr\'echet space, $\Si_{\{M_p\}}(\R)$ is an $(LF)$-space, while $\Sp_{\{M_p\}}(\R)$ is a $(PLB)$-space. 
In the sequel, we shall sometimes use $\mathcal{S}_\ast(\R)$ as a common notation for $\mathcal{S}_{(M_p)}(\R)$, $\Si_{\{M_p\}}(\R)$ and $\Sp_{\{M_p\}}(\R)$; a similar convention will be used for other spaces. If $(M_p)_{p \in \N}$ satisfies $(\operatorname{dc})$, the mapping
$$
\mathcal{M} : \mathcal{S}_{\ast}(\R)  \rightarrow \Lambda_{\ast}
$$
is well-defined and continuous. The following result will be used later on.
\begin{proposition} \label{complete}
The $(LF)$-space $\Si_{\{M_p\}}(\R)$ is complete.
\end{proposition}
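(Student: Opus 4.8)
The plan is to exhibit $\Si_{\{M_p\}}(\R)$ as a countable inductive limit of Fr\'echet spaces whose linking maps transform bounded sets into relatively compact sets, and then to appeal to the standard fact that such an inductive limit is regular and complete. For $h > 0$ set $E_h := \varprojlim_{n \to \infty} \mathcal{S}^n_{M_p,h}(\R)$; this is a Fr\'echet space, and a set $B \subseteq E_h$ is bounded precisely when for every $m \in \N$ there is $C_m > 0$ with $|\varphi^{(m)}(x)| \le C_m e^{-M(h|x|)}$ for all $\varphi \in B$ and $x \in \R$. By definition $\Si_{\{M_p\}}(\R) = \varinjlim_{k \to \infty} E_{1/k}$, and for $0 < h' < h$ the inclusion $E_h \hookrightarrow E_{h'}$ is continuous since $\|\varphi\|_{\mathcal{S}^n_{M_p,h'}} \le \|\varphi\|_{\mathcal{S}^n_{M_p,h}}$. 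Hence it suffices to prove: \emph{for $0 < h' < h$, every bounded $B \subseteq E_h$ is relatively compact in $E_{h'}$.}

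To this end, fix such a $B$ and a sequence $(\varphi_j) \subseteq B$. The uniform bounds $|\varphi_j^{(m)}(x)| \le C_m e^{-M(h|x|)}$ make each derivative family uniformly bounded and, via the bound on $\varphi_j^{(m+1)}$, equi-Lipschitz, hence equicontinuous; a diagonal Arzel\`a--Ascoli argument then yields a subsequence with $\varphi_j^{(m)} \to \psi^{(m)}$ locally uniformly for all $m$, for some $\psi \in C^\infty(\R)$, and passing the bounds to the pointwise limit gives $\psi \in E_h \subseteq E_{h'}$. To upgrade this to convergence in each seminorm $\|\cdot\|_{\mathcal{S}^n_{M_p,h'}}$, split at radius $R$: on $\{|x| \le R\}$ the local uniform convergence of $\varphi_j^{(m)}$ ($m \le n$) together with the boundedness of $e^{M(h'|x|)}$ there forces the corresponding supremum to $0$, while on $\{|x| \ge R\}$,
\[
\sup_{|x| \ge R} |\varphi_j^{(m)}(x) - \psi^{(m)}(x)|\, e^{M(h'|x|)} \le 2 C_m \sup_{|x| \ge R} e^{M(h'|x|) - M(h|x|)},
\]
and since $m_p \to \infty$ one has $e^{M(ht) - M(h't)} \to \infty$ as $t \to \infty$ (take $p$ nearly optimal in $M(h't) = \sup_p \log(t^p h'^p/M_p)$; then $e^{M(ht)}/e^{M(h't)}$ exceeds $(h/h')^p$, and this $p \to \infty$ as $t \to \infty$), so the tail is small uniformly in $j$ once $R$ is large. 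Thus $\varphi_j \to \psi$ in every seminorm of $E_{h'}$, i.e.\ $B$ is relatively (sequentially, hence topologically) compact in the metrizable space $E_{h'}$, as required.

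The only non-formal ingredient is this relative-compactness claim, and inside it the point that a strictly smaller weight parameter $h' < h$ beats the growth of $e^{M(h|x|)}$ at infinity; this is exactly why one gains compactness by passing to the next step of the inductive spectrum rather than remaining inside a single $E_h$, and it uses nothing beyond $m_p \to \infty$, so no further conditions on $(M_p)_{p \in \N}$ are needed. The remaining task is bookkeeping: quoting the general theorem that a countable inductive limit of (not necessarily Banach) Fr\'echet spaces with linking maps sending bounded sets to relatively compact sets is boundedly retractive, together with the fact that boundedly retractive $(LF)$-spaces are complete; one should check the cited version is stated for Fr\'echet steps, or else reduce to the Banach case by writing each $E_h$ as a projective limit of Banach spaces and noting that the convergence obtained above holds with respect to every defining seminorm.
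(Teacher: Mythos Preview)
Your argument is correct and takes a genuinely different route from the paper. The paper identifies $\Si_{\{M_p\}}(\R)$ with a weighted $(LF)$-space $\mathcal{B}_{\mathcal{V}}(\R)$ from \cite{D-V} and verifies an interpolation condition~$(\Omega)$ on the weight system $v_N = e^{M(|\cdot|/N)}$; this reduces to the convexity of $t\mapsto M(e^t)$, which holds because $M(e^t)=\sup_p (pt-\log M_p)$ is a supremum of affine functions. Completeness then comes from the black-box theorem \cite[Thm.~3.4]{D-V}. You instead prove by hand, via Arzel\`a--Ascoli and the tail estimate $M(ht)-M(h't)\to\infty$, that bounded subsets of each step $E_h$ are relatively compact in the next step $E_{h'}$ ($h'<h$), and then invoke a structural theorem for $(LF)$-spaces. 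Both approaches ultimately pass through acyclicity, but yours makes the compactness mechanism explicit, at the cost of importing a different non-trivial result at the end.

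Your hedging about that last step is warranted: the implication ``linking maps send bounded sets to relatively compact sets $\Rightarrow$ boundedly retractive'' is \emph{not} the classical compact-$(LB)$ statement, since your linking maps $E_h\hookrightarrow E_{h'}$ are not compact operators in the usual sense (a $0$-neighbourhood in $E_h$ controls only finitely many derivatives and is not even bounded). What you actually obtain is \emph{bounded stability}: for each $n$ there is $m=n+1$ such that for all $k\ge m$ the topologies of $E_m$ and $E_k$ agree on every bounded subset of $E_n$ (the closure in $E_m$ is compact, and a coarser Hausdorff topology on a compact set coincides with the original one). The passage from bounded stability to acyclicity, regularity and completeness for general $(LF)$-spaces is Wengenroth's theorem (Studia Math.\ \textbf{120} (1996), 247--258; or his LNM~1810). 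Citing this explicitly closes the gap you flagged; your proposed fallback of ``reducing to the Banach case'' would not work directly, since the steps are genuinely non-normable.
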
 
\begin{proof}
In the notation of \cite{D-V}, we have that $\Si_{\{M_p\}}(\R) = \mathcal{B}_{\mathcal{V}}(\R)$, where $\mathcal{V} = (v_N)_{N \in \N}$ with $v_N = e^{M(|\, \cdot \,|/N)}$ for $N \in \N$. By \cite[Thm.\ 3.4]{D-V}, it suffices to show that $\mathcal{V}$ satisfies $(\Omega)$, that is,
\begin{gather*}
\forall N \, \exists L \geq N \, \forall K \geq L \, \exists \theta \in (0,1) \, \exists C > 0 \, \forall x \in \R \, : \\
 v_L(x) \leq C (v_N(x))^{1-\theta} (v_K(x))^\theta.
\end{gather*}
The latter follows from the fact that the function $t \rightarrow M(e^t)$ is increasing and convex on $[0,\infty)$.
\end{proof}
Next, we discuss the Fourier transform on $\mathcal{S}_\ast(\R)$. For $n \in \N$ and $h > 0$ we write $\mathcal{S}^{M_p,h}_n(\R)$ for the Banach space consisting of all $\varphi \in C^{\infty}(\R)$ such that
$$
\| \varphi\|_{\mathcal{S}_n^{M_p,h}} := \max_{m \leq n}\sup_{p \in \N} \sup_{x \in \R} \frac{h^p |x^m\varphi^{(p)}(x)|}{M_p} < \infty.
$$
We set
\begin{gather*}
\mathcal{S}^{(M_p)}(\R) := \varprojlim_{n \to \infty}\mathcal{S}_n^{M_p,n}(\R), \\
\Si^{\{M_p\}}(\R) := \varinjlim_{h \to 0^+}\varprojlim_{n \to \infty}\mathcal{S}_n^{M_p,h}(\R),\\
\Sp^{\{M_p\}}(\R) := \varprojlim_{n \to \infty}\varinjlim_{h \to 0^+}\mathcal{S}^{M_p,h}_n(\R).
\end{gather*}
$\mathcal{S}^{(M_p)}(\R)$ is a Fr\'echet space, $\Si^{\{M_p\}}(\R)$ is an $(LF)$-space, while $\Sp^{\{M_p\}}(\R)$ is a $(PLB)$-space.  
If $(M_p)_{p \in \N}$ satisfies $(\operatorname{lc})$ and $(\operatorname{dc})$, the Fourier transform is a topological isomorphism from $\mathcal{S}_{\ast}(\R)$ onto $\mathcal{S}^{\ast}(\R)$ (cf.\ \cite[Sect.\ IV.6]{G-S}). 

We now introduce Gelfand-Shilov spaces of type $\mathcal{S}_\ast(0,\infty)$. Let $n \in \N$ and $h > 0$. We define the following closed subspaces of $\mathcal{S}^n_{M_p,h}(\R)$
\begin{gather*}
\mathcal{S}^n_{M_p,h}(0,\infty) := \{ \varphi \in \mathcal{S}^n_{M_p,h}(\R) \, | \, \operatorname{supp} \varphi \subseteq  [0, \infty) \}, \\
\mathcal{S}^{n,0}_{M_p,h}(\R) :=  \{ \varphi \in \mathcal{S}^n_{M_p,h}(\R)  \, | \, \varphi^{(m)}(0) = 0 \mbox{ for all $m = 0, \ldots, n$} \}, 
\end{gather*}
and endow them with the norm $\| \, \cdot \, \|_{\mathcal{S}^n_{M_p,h}}$.  Hence, they become Banach spaces. We set
\begin{gather*}
\mathcal{S}_{(M_p)}(0,\infty) := \varprojlim_{n \to \infty} \mathcal{S}^n_{M_p,n}(0,\infty), \quad \mathcal{S}^0_{(M_p)}(\R) := \varprojlim_{n \to \infty} \mathcal{S}^{n,0}_{M_p,n}(\R), \\
\Si_{\{M_p\}}(0,\infty) := \varinjlim_{h \to 0^+}\varprojlim_{n \to \infty} \mathcal{S}^n_{M_p,h}(0,\infty), \quad \Si^0_{\{M_p\}}(\R) := \varinjlim_{h \to 0^+}\varprojlim_{n \to \infty} \mathcal{S}^{n,0}_{M_p,h}(\R), \\
\Sp_{\{M_p\}}(0,\infty) := \varprojlim_{n \to \infty}\varinjlim_{h \to 0^+} \mathcal{S}^n_{M_p,h}(0,\infty), \quad \Sp^0_{\{M_p\}}(\R) := \varprojlim_{n \to \infty}\varinjlim_{h \to 0^+}\mathcal{S}^{n,0}_{M_p,h}(\R).
\end{gather*}
Notice that $\Si_{\{M_p\}}(0,\infty)$ was denoted by $\Sb_{\{M_p\}}(0,\infty)$ in the introduction. $\mathcal{S}_{(M_p)}(0,\infty)$ and $\mathcal{S}^0_{(M_p)}(\R)$ are Fr\'echet spaces, $\Si_{\{M_p\}}(0,\infty)$ and $\Si^0_{\{M_p\}}(\R)$ are $(LF)$-spaces, while $\Sp_{\{M_p\}}(0,\infty)$ and  $\Sp^0_{\{M_p\}}(\R)$ are $(PLB)$-spaces. We have that
\begin{gather}
\mathcal{S}_{\ast}(0,\infty) = \{ \varphi \in \mathcal{S}_{\ast}(\R)  \, | \, \operatorname{supp} \varphi \subseteq  [0, \infty) \}, \label{eq-1}  \\
\mathcal{S}_{\ast}^0(\R) =  \{ \varphi \in  \mathcal{S}_{\ast}(\R)  \, | \, \varphi^{(n)}(0) = 0 \mbox{ for all $n \in \N$} \} \label{eq-2},
\end{gather}
as sets. 
\begin{lemma} \label{ind-proj-description}  
Let $\mathcal{S}_{\ast}(\R) = \mathcal{S}_{(M_p)}(\R)$ or $\mathcal{S}_{\ast}(\R) = \Si_{\{M_p\}}(\R)$. Then, the equalities \eqref{eq-1} and \eqref{eq-2} hold topologically if the spaces at the right-hand side are endowed with the relative topology induced by $\mathcal{S}_{\ast}(\R)$. 
\end{lemma}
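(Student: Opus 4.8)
The statement to prove is Lemma \ref{ind-proj-description}: the set-theoretic equalities \eqref{eq-1} and \eqref{eq-2} are also topological equalities, where the subspaces on the right carry the relative topology from $\mathcal{S}_\ast(\R)$. The strategy is to identify each side as the projective (or inductive) limit of the corresponding Banach-space pieces and then to check that the natural identity maps are topological isomorphisms by comparing seminorms/bounded sets directly.

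**Step 1: reduce to the Banach-space level.** For the Beurling case $\mathcal{S}_\ast(\R) = \mathcal{S}_{(M_p)}(\R) = \varprojlim_n \mathcal{S}^n_{M_p,n}(\R)$, note that a subspace of a projective limit carries the relative topology precisely the projective limit of the traces on the steps; so it suffices to show that for each fixed $n$ and $h$, the space $\mathcal{S}^n_{M_p,h}(0,\infty)$ (resp. $\mathcal{S}^{n,0}_{M_p,h}(\R)$) as defined in the excerpt \emph{is} the closed subspace of $\mathcal{S}^n_{M_p,h}(\R)$ consisting of functions supported in $[0,\infty)$ (resp. vanishing to order $n$ at $0$), with the induced norm. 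But that is immediate: the excerpt defines these Banach spaces exactly as those closed subspaces with the norm $\|\cdot\|_{\mathcal{S}^n_{M_p,h}}$. Hence at the Banach level there is literally nothing to prove, and $\mathcal{S}_{(M_p)}(0,\infty) = \varprojlim_n \mathcal{S}^n_{M_p,n}(0,\infty)$ carries the relative topology from $\mathcal{S}_{(M_p)}(\R)$. The same reasoning handles $\mathcal{S}^0_{(M_p)}(\R)$.

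**Step 2: the Roumieu case.** For $\mathcal{S}_\ast(\R) = \Si_{\{M_p\}}(\R) = \varinjlim_{h\to 0^+}\varprojlim_n \mathcal{S}^n_{M_p,h}(\R)$, one first observes that $\Si_{\{M_p\}}(0,\infty) = \varinjlim_{h\to 0^+}\varprojlim_n \mathcal{S}^n_{M_p,h}(0,\infty)$ is set-theoretically the subspace of functions with support in $[0,\infty)$, by \eqref{eq-1}. For the topological identification I would invoke the fact that each $\varprojlim_n \mathcal{S}^n_{M_p,h}(0,\infty)$ is a closed subspace of $\varprojlim_n \mathcal{S}^n_{M_p,h}(\R)$ (support in a closed set and vanishing of derivatives at a point are closed conditions, stable under the limit), and then use that taking the inductive limit of a sequence of (topological) subspaces $F_h \subseteq E_h$ of an inductive sequence $(E_h)$ gives the subspace topology on $\varinjlim F_h$ inside $\varinjlim E_h$ — provided the inductive limit is \emph{regular}, i.e. bounded sets in the limit are bounded in some step. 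This is where the defining conditions enter: for the weighted spaces $\mathcal{S}^n_{M_p,h}(\R)$ the inductive spectrum over $h$ is compactly regular (indeed $\Si_{\{M_p\}}(\R)$ is an $(LF)$-space and, by Proposition \ref{complete}, complete), so the identification is valid. Concretely, a bounded set $B$ of $\Si_{\{M_p\}}(0,\infty)$ is bounded in $\Si_{\{M_p\}}(\R)$, hence contained and bounded in some step $\varprojlim_n \mathcal{S}^n_{M_p,h}(\R)$; since every element of $B$ is supported in $[0,\infty)$, $B$ is a bounded subset of $\varprojlim_n \mathcal{S}^n_{M_p,h}(0,\infty)$, which gives that the inclusion of the limits is a topological embedding on bounded sets and hence (for (LF)-spaces with the same underlying set) a topological isomorphism. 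The argument for $\Si^0_{\{M_p\}}(\R)$ versus \eqref{eq-2} is identical, using that "$\varphi^{(m)}(0)=0$ for all $m$" is the intersection over $n$ of the closed conditions defining $\mathcal{S}^{n,0}_{M_p,h}(\R)$.

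**Main obstacle.** The only genuine subtlety — and the place I would be most careful — is the interchange of the inductive limit over $h$ with passage to a subspace in the Roumieu case: this is not automatic for arbitrary $(LF)$-spaces and requires the regularity of the inductive spectrum. I would make this explicit by citing completeness of $\Si_{\{M_p\}}(\R)$ (Proposition \ref{complete}) together with the standard fact that a closed subspace of a regular $(LF)$-space, written as the inductive limit of the traces on the steps, carries the induced topology; Beurling case needs no such care since projective limits commute with closed subspaces unconditionally. I do not expect the PLB cases $\Sp^0_{\{M_p\}}$ to be needed here, as the lemma is stated only for $\mathcal{S}_{(M_p)}(\R)$ and $\Si_{\{M_p\}}(\R)$.
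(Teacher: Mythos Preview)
Your Beurling case is correct and coincides with the paper's one-word argument (``Obvious''): projective limits commute with passage to closed subspaces.

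The Roumieu case, however, has a genuine gap. You invoke as a ``standard fact'' that a closed subspace of a regular $(LF)$-space, written as the inductive limit of the traces on the steps, carries the induced topology. This is not true in general, and it is precisely the nontrivial content of the limit-subspace problem. Your argument shows only that the $(LF)$-topology on $\Si_{\{M_p\}}(0,\infty)$ and the relative topology have the same bounded sets; since the $(LF)$-topology is bornological but the relative topology on an arbitrary closed subspace of an $(LF)$-space need not be (nor need it be ultrabornological, which is what De Wilde's open mapping theorem would require), equality of bounded sets does not force equality of topologies. It is also unclear that completeness of $\Si_{\{M_p\}}(\R)$---which is all Proposition~\ref{complete} asserts---implies regularity for $(LF)$-spaces; the proof of that proposition does go through an $(\Omega)$-type condition that yields more, but you do not use this.

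The paper circumvents the issue by two targeted lemmas rather than a general principle. For $\Si^0_{\{M_p\}}(\R)$ it observes that this space is $\ker\mathcal{B}$ for the surjective Borel map $\mathcal{B}:\Si_{\{M_p\}}(\R)\to\C^\N$ (surjectivity by the classical Borel theorem), and then applies a result of Vogt (Lemma~\ref{limit-1}, from \cite[Prop.~1.2]{Vogt}): the kernel of a continuous linear surjection from an $(LF)$-space onto a Fr\'echet space is always a limit subspace. For $\Si_{\{M_p\}}(0,\infty)$ it then notes that this is a \emph{complemented} subspace of $\Si^0_{\{M_p\}}(\R)$ (the projection $\varphi\mapsto\varphi\chi_{[0,\infty)}$ is well-defined and continuous once all derivatives vanish at $0$); complemented subspaces inherit ultrabornologicality, so De Wilde's open mapping theorem applies (Lemma~\ref{UB}), and one concludes that $\Si_{\{M_p\}}(0,\infty)$ is a limit subspace of $\Si^0_{\{M_p\}}(\R)$ and hence of $\Si_{\{M_p\}}(\R)$.
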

We need some preparation for the proof of Lemma \ref{ind-proj-description}. Let $E = \varinjlim_{n \in \N} E_n$ be an $(LF)$-space. A subspace $L$ of $E$ is called a \emph{limit subspace of E} if $L = \varinjlim_{n \in \N} L \cap E_n$ topologically, where $L$ is endowed with the relative topology induced by $E$ and $L \cap E_n$, $n \in \N$, is endowed with the relative topology induced by $E_n$. The following result is a consequence of \cite[Prop.\ 1.2]{Vogt} and the fact that every Fr\'echet space is an acyclic $(LF)$-space; we refer to \cite{Vogt} for the definition of an acyclic $(LF)$-space.
\begin{lemma}\label{limit-1} \emph{(cf.\ \cite[Prop.\ 1.2]{Vogt})}
Let $E$ be an $(LF)$-space, let $F$ be a Fr\'echet space and let $T: E \rightarrow F$ be a surjective continuous linear mapping. Then, $\operatorname{ker} T$ is a limit subspace of $E$. 
\end{lemma}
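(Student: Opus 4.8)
The plan is to reduce the statement to Vogt's Proposition 1.2 \cite{Vogt} by exhibiting the target $F$ as an \emph{acyclic} $(LF)$-space and then applying that proposition to the surjection $T$. First I would unwind what must be shown. Writing $E = \varinjlim_{n} E_n$, the subspace $L := \ker T$ is closed in $E$ (since $T$ is continuous and $F$ is Hausdorff), so each $L \cap E_n$ is closed in the Fréchet space $E_n$ and is therefore itself Fréchet; thus $L$ carries a genuine inductive spectrum of Fréchet spaces and the phrase ``limit subspace'' is meaningful. Because the inclusions $L \cap E_n \hookrightarrow L$ are continuous for the relative topology $\tau_{\mathrm{rel}}$ induced by $E$, that topology is always coarser than the inductive-limit topology $\tau_{\mathrm{ind}} := \varinjlim_{n}(L \cap E_n)$; the content of the lemma is the reverse inequality, namely that $\tau_{\mathrm{rel}} = \tau_{\mathrm{ind}}$.

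Next I would set up the reduction. Every Fréchet space $F$ may be regarded as an $(LF)$-space via the constant (stationary) inductive spectrum $F_n := F$ with identity linking maps, for which $\varinjlim_{n} F_n = F$ topologically; as recorded in the text, this constant spectrum is acyclic, so $F$ is an acyclic $(LF)$-space. Under this identification $T \colon E \to F$ is exactly a continuous linear surjection from an $(LF)$-space onto an acyclic $(LF)$-space, which is precisely the hypothesis of \cite[Prop.\ 1.2]{Vogt}. Applying that proposition then yields that $\ker T$ is a limit subspace of $E$, i.e.\ $L = \varinjlim_{n}(L \cap E_n)$ topologically, as claimed.

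The two imported ingredients are of very different weight. Acyclicity of a constant Fréchet spectrum is essentially formal: the spectrum is stationary, so the inductive limit is attained and every regularity condition entering the definition of acyclicity (vanishing of the relevant $\operatorname{Proj}^1$, equivalently the Retakh/regularly-decreasing condition on the spectrum) holds trivially. The substantive input is Vogt's Proposition 1.2 itself, whose proof exploits that a continuous surjection onto an acyclic $(LF)$-space behaves, in a homological sense, like an open map at the level of the spectra, forcing the kernel spectrum to be regular and hence $\tau_{\mathrm{ind}}$ and $\tau_{\mathrm{rel}}$ on $\ker T$ to agree. I expect the main obstacle, were one to prove the lemma from scratch instead of by citation, to be exactly this openness/regularity step: controlling a $\tau_{\mathrm{ind}}$-neighborhood of $0$ — equivalently, a neighborhood in each $L \cap E_n$ simultaneously — by the trace on $L$ of a \emph{single} neighborhood in $E$. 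This is where the acyclicity of the target is indispensable, and it is precisely the part that Vogt's proposition packages for us.
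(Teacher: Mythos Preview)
Your proposal is correct and follows exactly the route indicated in the paper: view the Fr\'echet space $F$ as an acyclic $(LF)$-space (via the constant spectrum) and then invoke \cite[Prop.\ 1.2]{Vogt}. There is nothing to add.
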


\begin{lemma}\label{UB}
Let $E$ be an $(LF)$-space. Every complemented subspace of $E$ is a limit subspace of $E$.
\end{lemma}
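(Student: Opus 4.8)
I would unwind the definition: writing $E = \varinjlim_{n} E_{n}$, the point is to show that the locally convex inductive limit topology $\tau$ of the spaces $L \cap E_{n}$ (each with the topology induced by $E_{n}$) coincides with the topology $L$ carries as a subspace of $E$; since $L = \bigcup_{n}(L \cap E_{n})$ as sets, this is exactly the assertion that $L$ is a limit subspace. One inclusion is automatic: each $L \cap E_{n} \hookrightarrow L$ factors as $L \cap E_{n} \hookrightarrow E_{n} \hookrightarrow E$ with range in $L$, hence is continuous for the subspace topology, so the subspace topology is coarser than $\tau$. The content lies in the reverse inclusion. I would then fix a continuous linear projection $P \colon E \to E$ with range $L$ (which exists because $L$ is complemented); note that $P|_{L} = \operatorname{id}_{L}$ and that $L = \ker(\operatorname{id}_{E} - P)$ is closed in $E$, so each $L \cap E_{n}$ is a Fr\'echet space and $(L,\tau)$ is again an $(LF)$-space.

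The crux is the claim that $P$ is continuous as a map $E \to (L,\tau)$. I would prove this via the universal property of the inductive limit $E = \varinjlim_{n} E_{n}$: it suffices to check that each restriction $P|_{E_{n}} \colon E_{n} \to (L,\tau)$ is continuous. Now $P|_{E_{n}} \colon E_{n} \to E$ is a continuous linear map from a Fr\'echet space into the $(LF)$-space $E$, so by Grothendieck's factorization theorem there is $m \geq n$ with $P(E_{n}) \subseteq E_{m}$ and $P|_{E_{n}} \colon E_{n} \to E_{m}$ continuous; since also $P(E_{n}) \subseteq L$, this means $P|_{E_{n}} \colon E_{n} \to L \cap E_{m}$ is continuous, and composing with the continuous inclusion $L \cap E_{m} \hookrightarrow (L,\tau)$ gives the claim.

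Granting the claim, I would finish by restricting $P \colon E \to (L,\tau)$ to the subspace $L$ with its subspace topology from $E$: since $P|_{L} = \operatorname{id}_{L}$, this restriction is precisely the identity map of $L$ from the subspace topology to $\tau$, and it is continuous; hence $\tau$ is coarser than the subspace topology. Together with the opposite inclusion from the first step, the two topologies coincide, so $L$ is a limit subspace of $E$.

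I expect the only genuinely non-formal ingredient to be the appeal to Grothendieck's factorization theorem, i.e.\ that the continuous linear map $P|_{E_{n}}$ out of the Fr\'echet step $E_{n}$ actually takes its values in, and is continuous into, a single later step $E_{m}$; the two topology comparisons and the reduction to the steps are routine. It is worth noting that no regularity or acyclicity assumption on $E$ is needed here, since Grothendieck's theorem holds for arbitrary $(LF)$-spaces.
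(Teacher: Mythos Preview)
Your proof is correct but takes a different route from the paper's. The paper argues in one line via De Wilde's open mapping theorem: $(L,\tau)$ is an $(LF)$-space, hence webbed; $L$ with the subspace topology is ultrabornological, being a complemented subspace of the ultrabornological space $E$; the continuous linear bijection $(L,\tau)\to (L,\text{subspace})$ is therefore open by De Wilde, and the two topologies coincide. Your argument instead avoids the webbed/ultrabornological machinery entirely and works directly with the projection $P$: using Grothendieck's factorization theorem (valid for maps from a Fr\'echet space into any countable inductive limit of Fr\'echet spaces, with no regularity hypothesis), you show that each $P|_{E_n}$ factors continuously through some step $L\cap E_m$, whence $P\colon E\to (L,\tau)$ is continuous and restricting to $L$ gives the missing inequality of topologies. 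Your approach is more hands-on and relies on a lighter tool, while the paper's is a quick appeal to a strong black box; both are standard, and your final remark that no regularity of the $(LF)$-space is needed is correct for exactly the reason you give.
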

\begin{proof}
Since $(LF)$-spaces are webbed and ultrabornological \cite[Remark 24.36]{M-V} and the class of ultrabornological lcHs is closed under taking complemented subspaces, this follows from De Wilde's open mapping theorem \cite[Thm.\ 24.30]{M-V}.
\end{proof}
\begin{proof}[Proof of Lemma \ref{ind-proj-description}]
$\mathcal{S}_{\ast}(\R) = \mathcal{S}_{(M_p)}(\R)$: Obvious.

$\mathcal{S}_{\ast}(\R) = \Si_{\{M_p\}}(\R)$: It suffices to show that $\Si_{\{M_p\}}(0,\infty)$ and $\Si^0_{\{M_p\}}(\R)$ are limit subspaces of $\Si_{\{M_p\}}(\R)$.  We first consider $\Si^0_{\{M_p\}}(\R)$. By Borel's theorem, the continuous linear mapping $\mathcal{B}: \Si_{\{M_p\}}(\R) \rightarrow \C^\N$ is surjective. Clearly,  $\Si^0_{\{M_p\}}(\R) = \ker \mathcal{B}$. Hence, the result follows from Lemma \ref{limit-1}. Next, we deal with $\Si_{\{M_p\}}(0,\infty)$. Since $\Si_{\{M_p\}}(0,\infty)$ is a complemented subspace of $\Si^0_{\{M_p\}}(\R)$,  Lemma \ref{UB} yields that 
$\Si_{\{M_p\}}(0,\infty)$ is a limit subspace of $\Si^0_{\{M_p\}}(\R)$. Consequently, as we already have shown that $\Si^0_{\{M_p\}}(\R)$ is a limit subspace of $\Si_{\{M_p\}}(\R)$, $\Si_{\{M_p\}}(0,\infty)$ is a limit subspace of $\Si_{\{M_p\}}(\R)$.
\end{proof}
Finally, we present two technical lemmas that will play an important role later on.
\begin{lemma}\label{reduction-2-0}
Let $(N_p)_{p \in \N}$ be a weight sequence satisfying $(\operatorname{lc})$ and $(\operatorname{dc})$.
\begin{itemize}
\item[$(a)$] $T: \mathcal{S}_{(N_p)}(0,\infty) \rightarrow \mathcal{S}_{(N_p)}(0,\infty)$ and  $T: \Si_{\{N_p\}}(0,\infty) \rightarrow \Si_{\{N_p\}}(0,\infty)$ are well-defined continuous mappings, where 
$$
T(\varphi)(x)=
\left\{
	\begin{array}{ll}
		\frac{\varphi(x)}{x},  &  \mbox{$x > 0$}, \\ \\
		0,  &   \mbox{$x \leq 0$}.
	\end{array}
\right. 
$$
\item[$(b)$] $T: \mathcal{S}_{(N_p)}(0,\infty) \rightarrow \mathcal{S}_{(N_p)}(0,\infty)$ and  $T: \Sp_{\{N_p\}}(0,\infty) \rightarrow \Sp_{\{N_p\}}(0,\infty)$ are well-defined continuous  mappings, where 
$$
T(\varphi)(x)=
\left\{
	\begin{array}{ll}
		x\varphi(x),  &  \mbox{$x > 0$}, \\ \\
		0,  &   \mbox{$x \leq 0$}.
	\end{array}
\right. 
$$
\end{itemize}
\end{lemma}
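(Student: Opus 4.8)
The plan is to verify both parts by direct norm estimates on the defining Banach spaces, exploiting the explicit description of the seminorms of the Gelfand-Shilov spaces in terms of the associated function $M(h|x|)$, together with $(\operatorname{dc})$. The key point to settle first is the smoothness of $T(\varphi)$ across $x=0$; here is where we use that $\varphi$ (together with all its derivatives) vanishes to infinite order at $0$. For part $(a)$ this says that $T(\varphi)(x)=\varphi(x)/x$ extends to a smooth function on all of $\R$ with support in $[0,\infty)$, because $\varphi$ is flat at $0$; indeed $\varphi(x)=\int_0^1 \varphi'(tx)\,x\,{\rm d}t$ so $\varphi(x)/x=\int_0^1 \varphi'(tx)\,{\rm d}t$, and more generally $(\varphi/x)^{(m)}(x)$ is a combination of $\int_0^1 t^k\varphi^{(k+1)}(tx)\,{\rm d}t$'s. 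For part $(b)$, $x\varphi(x)$ is trivially smooth, and multiplication by $x$ preserves the support condition $[0,\infty)$; the content there is the growth estimate.

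For part $(b)$ (the easier one) I would argue as follows. Writing $\psi(x)=x\varphi(x)$ for $x>0$ and $0$ otherwise, Leibniz gives $\psi^{(p)}(x)=x\varphi^{(p)}(x)+p\varphi^{(p-1)}(x)$, so that for $m\le n$ and $q\in\N$,
\[
|x^q\psi^{(p)}(x)| \le |x^{q+1}\varphi^{(p)}(x)| + p\,|x^q\varphi^{(p-1)}(x)|.
\]
Estimating each term by $\|\varphi\|_{\mathcal{S}^{N_p,h}_n}$-type norms and using $(\operatorname{dc})$ in the form $N_{q+1}\le C_0 H^{q+1}N_q$ to absorb the shift in $q$, and $N_p\le C_0 H^p N_{p-1}$ together with $p \le $ const$\cdot H'^{p}$ (which follows since $N_p^{1/p}\to\infty$, so $p = o(N_p/N_{p-1})$ — actually one just bounds $p$ crudely) to absorb the factor $p$, one obtains $\|\psi\|_{\mathcal{S}^{N_p,h'}_n}\le C\|\varphi\|_{\mathcal{S}^{N_p,h}_n}$ for a suitable $h'\le h$ and constant $C$ independent of $\varphi$. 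Tracking how $h'$ depends on $h$ and $n$ shows the map is well-defined and continuous on the Fréchet space $\mathcal{S}_{(N_p)}(0,\infty)=\varprojlim \mathcal{S}^{N_p,n}_n(0,\infty)$ and on the $(PLB)$-space $\Sp_{\{N_p\}}(0,\infty)=\varprojlim_n\varinjlim_{h\to 0^+}$.

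For part $(a)$, I would use the integral representation $T(\varphi)(x)=\int_0^1\varphi'(tx)\,{\rm d}t$ for $x>0$ and, differentiating under the integral sign, $(T\varphi)^{(m)}(x)=\int_0^1 t^m\varphi^{(m+1)}(tx)\,{\rm d}t$; this already shows $T(\varphi)\in C^\infty(\R)$ with support in $[0,\infty)$ (flatness at $0$ makes all one-sided limits agree with $0$ on the left). Then for $q\in\N$,
\[
|x^q (T\varphi)^{(m)}(x)| \le \int_0^1 t^{m-q}\,|(tx)^q\varphi^{(m+1)}(tx)|\,{\rm d}t,
\]
and since $m\le n$ the exponent $m-q$ is harmless when $q\le m$; when $q>m$ one instead writes $t^m = t^{q}\cdot t^{m-q}$ is not integrable, so one should rather split: for $q > m$ use $|x^q(T\varphi)^{(m)}(x)| = |x|^{q-m}\cdot|x^m(T\varphi)^{(m)}(x)|$ and bound the last factor by the integral with $q$ replaced by $m$ — wait, this does not control large $q$. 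The clean fix is: for any $q$, $t^m \le 1$ on $[0,1]$, so $|x^q(T\varphi)^{(m)}(x)|\le \int_0^1 |(tx)^q\,t^{-q}x^{... }|$ — to avoid this nuisance, use instead $|x^q(T\varphi)^{(m)}(x)| \le \sup_{|y|\le|x|}|y^q\varphi^{(m+1)}(y)|\cdot\int_0^1 t^{m}t^{-q}\,{\rm d}t$ only for $q\le m$; for $q>m$ bound $|x^q(T\varphi)^{(m)}(x)|\le |x|^{q-m}\int_0^1 t^m|(tx)^m\varphi^{(m+1)}(tx)|\,{\rm d}t\cdot$ — which fails to decay.

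The correct route, and the step I expect to be the main obstacle, is therefore to get decay in $x$ directly: note that $\int_0^1 t^m\varphi^{(m+1)}(tx)\,{\rm d}t$ can also be controlled, for $|x|\ge 1$, by a change of variables $s=tx$: $(T\varphi)^{(m)}(x)=x^{-m-1}\int_0^x s^m\varphi^{(m+1)}(s)\,{\rm d}s$, whence $|x^q(T\varphi)^{(m)}(x)|\le |x|^{q-m-1}\int_0^{|x|}|s^m\varphi^{(m+1)}(s)|\,{\rm d}s$; now estimate the integrand by $|s^{m+q+1}\varphi^{(m+1)}(s)|\cdot s^{-(q+1)}$ and integrate $\int_0^{|x|} s^{-(q+1)}\,{\rm d}s$ — still divergent at $0$. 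The genuinely working estimate uses the exponential form of the norm: $|\varphi^{(m+1)}(y)|\le \|\varphi\|_{\mathcal{S}^{n}_{N_p,h}}e^{-N(h|y|)}$ for $m+1\le n$, and $e^{-N(h|y|)}$ is rapidly decreasing, so $(T\varphi)^{(m)}(x)=\int_0^1\varphi^{(m+1)}(tx)\,{\rm d}t$ satisfies $|(T\varphi)^{(m)}(x)| \le \|\varphi\|\int_0^1 e^{-N(ht|x|)}\,{\rm d}t$, and one checks $\int_0^1 e^{-N(ht|x|)}\,{\rm d}t \le C e^{-N(h'|x|)}$ for $h'<h$ (using convexity/monotonicity of $t\mapsto N(e^t)$, exactly as in the proof of Proposition \ref{complete}); this gives $\|T\varphi\|_{\mathcal{S}^n_{N_p,h'}}\le C\|\varphi\|_{\mathcal{S}^{n+1}_{N_p,h}}$. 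The loss of one derivative ($n\to n+1$) is absorbed by the projective limit over $n$, and the shrinkage $h\to h'$ is compatible with both the Fréchet structure of $\mathcal{S}_{(N_p)}(0,\infty)$ and the $(LF)$-structure of $\Si_{\{N_p\}}(0,\infty)=\varinjlim_{h\to 0^+}\varprojlim_n$. Finally one must also verify $T(\varphi)\in\mathcal{S}(0,\infty)$, i.e.\ that it is genuinely rapidly decreasing with all derivatives — but that is exactly what the estimate just produced, since $e^{-N(h'|x|)}$ decays faster than any polynomial. This completes part $(a)$; the whole argument is elementary once the exponential reformulation of the norm is used, and the only real subtlety is handling the behaviour near $x=0$ via the integral representation and flatness.
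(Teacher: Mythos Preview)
Your treatment of part $(b)$ is essentially the paper's: Leibniz gives $\psi^{(m)}(x)=x\varphi^{(m)}(x)+m\varphi^{(m-1)}(x)$, and $(\operatorname{dc})$ absorbs the shift $q\mapsto q+1$ at the cost of replacing $h$ by $hH$ (the factor you call $p$ is actually $m\le n$, hence bounded, so no growth issue there). The paper records this as $T:\mathcal{S}^{n}_{N_p,hH}(0,\infty)\to\mathcal{S}^{n}_{N_p,h}(0,\infty)$, same $n$, which is exactly what the $(PLB)$-structure $\Sp_{\{N_p\}}(0,\infty)=\varprojlim_n\varinjlim_h$ requires.

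Part $(a)$, however, contains a genuine gap. Your final estimate hinges on
\[
\int_0^1 e^{-N(ht|x|)}\,\mathrm{d}t \;\le\; C\, e^{-N(h'|x|)}\quad\text{for some }h'<h,
\]
and this inequality is \emph{false}. After the substitution $s=ht|x|$ one has
\[
\int_0^1 e^{-N(ht|x|)}\,\mathrm{d}t=\frac{1}{h|x|}\int_0^{h|x|}e^{-N(s)}\,\mathrm{d}s\;\sim\;\frac{A}{h|x|},\qquad A=\int_0^\infty e^{-N(s)}\,\mathrm{d}s<\infty,
\]
as $|x|\to\infty$, whereas $e^{-N(h'|x|)}\le N_2/(h'|x|)^2=o(1/|x|)$. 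So the left-hand side decays only like $1/|x|$, not like $e^{-N(h'|x|)}$, and no choice of $h'>0$ and $C$ can save the inequality. The analogy with the convexity argument in Proposition~\ref{complete} is misleading: that argument yields a pointwise interpolation inequality between three weights, not an integral bound.

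The underlying difficulty is real: your integral representation $(T\varphi)^{(m)}(x)=\int_0^1 t^m\varphi^{(m+1)}(tx)\,\mathrm{d}t$ averages $\varphi^{(m+1)}$ over $[0,x]$, and this averaging cannot inherit the full $e^{-N(h|x|)}$ decay, since values of $\varphi^{(m+1)}$ near $0$ (where there is no decay) always contribute. The paper avoids this by splitting at $x=1$ and using the \emph{other} formula on $[1,\infty)$: from Leibniz,
\[
(T\varphi)^{(m)}(x)=\sum_{j=0}^m\binom{m}{j}(-1)^j j!\,\frac{\varphi^{(m-j)}(x)}{x^{j+1}},\qquad x>0,
\]
and for $x\ge 1$ each $1/x^{j+1}\le 1$, so the decay $e^{-N(h|x|)}$ is inherited directly from $\varphi^{(m-j)}(x)$ with the \emph{same} $h$. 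For $0<x<1$ the paper uses Taylor's formula (flatness at $0$) to cancel the $1/x^{j+1}$ singularities; your integral representation would also serve here. The net result is $T:\mathcal{S}^{n+2}_{N_p,h}(0,\infty)\to\mathcal{S}^{n}_{N_p,h}(0,\infty)$: same $h$, loss of two derivatives, which fits both the Fr\'echet and the $(LF)$-structure $\Si_{\{N_p\}}(0,\infty)=\varinjlim_h\varprojlim_n$.
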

\begin{proof} We start by recalling the following consequence of Taylor's theorem: Let $\varphi \in C^n([0,1])$, $n \in \N$, be such that $\varphi^{(m)}(0) = 0$ for all $m = 0, \ldots, n$. Then,
\begin{equation}
|\varphi^{(j)}(x)| \leq\frac{\|\varphi^{(j+k)}\|_{L^\infty([0,1])}}{k!} x^k, \qquad x \in [0,1],
\label{taylor}
\end{equation}
for all $j,k \in \N \mbox{ with } j + k \leq n$. 

$(a)$ It suffices to show that $T : \mathcal{S}^{n+2}_{N_p,h}(0,\infty) \rightarrow \mathcal{S}^{n}_{N_p,h}(0,\infty)$ is well-defined and continuous for all $n \in \N$ and $h > 0$. Let $\varphi \in  \mathcal{S}^{n+2}_{N_p,h}(0,\infty)$ be arbitrary. It holds that
$$
T(\varphi)^{(m)}(x) = \sum_{j=0}^m \binom{m}{j} (-1)^j j!\frac{\varphi^{(m-j)}(x)}{x^{j+1}}, \qquad x > 0,
$$
for all $m \leq n$. Hence, \eqref{taylor} yields that $T(\varphi) \in C^n(\R)$ with $\operatorname{supp} \varphi \subseteq [0,\infty)$ and
\begin{align*}
\| T(\varphi) \|_{ \mathcal{S}^{n}_{N_p,h}} &\leq \max_{m \leq n} \sup_{p \in \N} \sup_{x > 0}  \frac{h^px^p}{N_p}  \sum_{j=0}^m \binom{m}{j} j!\frac{|\varphi^{(m-j)}(x)|}{x^{j+1}} \\
& \leq \max_{m \leq n} \sup_{p \in \N}\sup_{0 <x < 1}  \frac{h^p}{N_p}  \sum_{j=0}^m \binom{m}{j}  j!\frac{|\varphi^{(m-j)}(x)|}{x^{j+1}}  \\
&+  \max_{m \leq n} \sup_{p \in \N}\sup_{x \geq 1} \sum_{j=0}^m \binom{m}{j} j!\frac{h^px^p|\varphi^{(m-j)}(x)|}{N_p} \\
& \leq 2^ne^{N(h)}\max_{m \leq n} \| \varphi^{(m+1)}\|_{L^\infty([0,1])}+ 2^n n! \|\varphi \|_{ \mathcal{S}^{n}_{N_p,h}} \\
&\leq C \|\varphi \|_{\mathcal{S}^{n+2}_{N_p,h}}.
\end{align*}
$(b)$ It suffices to show that $T : \mathcal{S}^{n}_{N_p,hH}(0,\infty) \rightarrow \mathcal{S}^{n}_{N_p,h}(0,\infty)$ is well-defined and continuous for all $n \in \N$ and $h > 0$. Let $\varphi \in \mathcal{S}^{n}_{N_p,hH}(0,\infty)$ be arbitrary. It holds that $T(\varphi) \in C^n(\R)$ with $\operatorname{supp} \varphi \subseteq [0,\infty)$ and
\begin{align*}
\| T(\varphi) \|_{ \mathcal{S}^{n}_{N_p,h}} &\leq \max_{m \leq n} \sup_{p \in \N}\sup_{x > 0} \frac{h^px^{p+1}|\varphi^{(m)}(x)|}{N_p} +  \max_{m \leq n} \sup_{p \in \N}\sup_{x > 0} m  \frac{h^px^{p}|\varphi^{(m-1)}(x)|}{N_p} \\
&\leq  C \| \varphi \|_{ \mathcal{S}^{n}_{N_p,Hh}}. 
 \end{align*}
\end{proof}

\begin{lemma}\label{reduction-2-1}
Let $(M_p)_{p \in \N}$ be a weight sequence satisfying $(\operatorname{lc})$ and $(\operatorname{dc})$. Denote by $(N_p)_{p \in \N}$ its $2$-interpolating sequence.
\begin{itemize}
\item[$(a)$] $T: \mathcal{S}_{(N_p)}(0,\infty) \rightarrow \mathcal{S}_{(M_p)}(0,\infty)$ and $T: \Si_{\{N_p\}}(0,\infty) \rightarrow \Si_{\{M_p\}}(0,\infty)$  are well-defined continuous  mappings, where 
$$
T(\varphi)(x)=
\left\{
	\begin{array}{ll}
		\varphi(x^{1/2}),  &  \mbox{$x > 0$}, \\ \\
		0,  &   \mbox{$x \leq 0$}.
	\end{array}
\right. 
$$
\item[$(b)$] $T: \mathcal{S}_{(M_p)}(0,\infty) \rightarrow \mathcal{S}_{(N_p)}(0,\infty)$ and $T: \Sp_{\{M_p\}}(0,\infty) \rightarrow \Sp_{\{N_p\}}(0,\infty)$  are well-defined continuous mappings, where
$$
T(\varphi)(x)=
\left\{
	\begin{array}{ll}
		\varphi(x^2)  &  \mbox{$x > 0$}, \\ \\
		0,  &   \mbox{$x \leq 0$}.
	\end{array}
\right. 
$$
\end{itemize}
\end{lemma}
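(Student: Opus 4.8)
The plan is to reduce both parts to estimates at the level of the defining Banach spaces and then to invoke the standard $\varprojlim$, $(LF)$ and $(PLB)$ descriptions. Precisely, for $(a)$ I would prove that for all $n\in\N$ and $h>0$ there is $\tilde h>0$ such that $T:\mathcal{S}^{n'}_{N_p,\tilde h}(0,\infty)\to\mathcal{S}^{n}_{M_p,h}(0,\infty)$ is well defined and continuous, where $n'=2n+1$, and for $(b)$ the analogous statement $T:\mathcal{S}^{n}_{M_p,\tilde h}(0,\infty)\to\mathcal{S}^{n}_{N_p,h}(0,\infty)$ with no loss in the order of differentiation. Granting these, the four assertions follow by routine juggling of the limits: in the Beurling cases one takes the source step large enough; for $\Si_{\{N_p\}}(0,\infty)=\varinjlim_{h\to0^+}\varprojlim_n\mathcal{S}^n_{N_p,h}(0,\infty)$ a linear map is continuous as soon as its restriction to each step $\varprojlim_n\mathcal{S}^n_{N_p,h}(0,\infty)$ is, and one checks that $T$ sends this step continuously into $\varprojlim_n\mathcal{S}^n_{M_p,h^2}(0,\infty)$, with $h^2\to0^+$; for $\Sp_{\{M_p\}}(0,\infty)=\varprojlim_n\varinjlim_{h\to0^+}\mathcal{S}^n_{M_p,h}(0,\infty)$ one checks continuity componentwise in the outer projective limit, using that $T:\varinjlim_{h\to0^+}\mathcal{S}^n_{M_p,h}(0,\infty)\to\varinjlim_{h\to0^+}\mathcal{S}^n_{N_p,h}(0,\infty)$ is continuous for each fixed $n$. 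By Lemma~\ref{2-interpolating}, $(N_p)$ is again a weight sequence satisfying $(\operatorname{lc})$ and $(\operatorname{dc})$, so all the constants used below are available.

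The heart of the matter is the Banach-space estimate. The Faà di Bruno formula gives, for $x>0$,
$$
(T\varphi)^{(m)}(x)=\sum_{k=\lceil m/2\rceil}^{m}c_{m,k}\,x^{2k-m}\varphi^{(k)}(x^2)\quad(\text{case }(b)),\qquad (T\varphi)^{(m)}(x)=\sum_{k=1}^{m}d_{m,k}\,x^{k/2-m}\varphi^{(k)}(x^{1/2})\quad(\text{case }(a)),
$$
with constants depending only on $m,k$, since $(x^2)^{(i)}=0$ for $i\ge3$ and $(x^{1/2})^{(i)}=b_i x^{1/2-i}$. Substituting $y=x^2$, resp.\ $y=x^{1/2}$, each summand of $x^p(T\varphi)^{(m)}(x)$ becomes $y^{p+2k-m}\varphi^{(k)}(y)$, resp.\ $y^{2p+k-2m}\varphi^{(k)}(y)$. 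In case $(b)$ the exponent $p+2k-m\ge p\ge0$ is a non-negative integer of either parity; when it is odd, say $=2q+1$, log-convexity (Hölder) gives
$$
y^{q+1/2}|\varphi^{(k)}(y)|\le\big(y^{q}|\varphi^{(k)}(y)|\big)^{1/2}\big(y^{q+1}|\varphi^{(k)}(y)|\big)^{1/2}\le\tilde h^{-q-1/2}(M_qM_{q+1})^{1/2}\,\|\varphi\|_{\mathcal{S}^{n}_{M_p,\tilde h}},
$$
and, since $(M_qM_{q+1})^{1/2}=N_{2q+1}$ and $M_q=N_{2q}$, both parities yield $|y^{p+2k-m}\varphi^{(k)}(y)|\le\tilde h^{-(p+2k-m)/2}N_{p+2k-m}\,\|\varphi\|_{\mathcal{S}^{n}_{M_p,\tilde h}}$; this is precisely the point at which the $2$-interpolating sequence is forced. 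In case $(a)$ the exponent $2p+k-2m$ is already an integer, so no interpolation is needed, and one uses $N_{2q}=M_q$, $N_{2q+1}=(M_qM_{q+1})^{1/2}$ to estimate $N_{2p+k-2m}$ by products of $M_\bullet$'s.

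It remains to absorb the bounded index shifts and to deal with $x$ near $0$. In case $(b)$, writing $j=2k-m\in\{0,\dots,n\}$, condition $(\operatorname{dc})$ for $(N_p)$ gives $N_{p+j}/N_p\le(C_0H^{p+j})^{j}\le C\,H^{np}$, so each summand of $h^px^p(T\varphi)^{(m)}(x)/N_p$ is $\le C\,\tilde h^{-j/2}(h\tilde h^{-1/2}H^n)^p\,\|\varphi\|$, bounded uniformly in $p$ and $x>0$ once $\tilde h\ge(hH^n)^2$; since the exponents are non-negative, no splitting and no loss in the order is needed here, and the Taylor estimate \eqref{taylor} is used only to check $T\varphi\in C^n(\R)$ with $\operatorname{supp}T\varphi\subseteq[0,\infty)$, exactly as in Lemma~\ref{reduction-2-0}. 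In case $(a)$, the index $N_{2p+k-2m}$ is shifted \emph{downward} relative to $N_{2p}=M_p$ and the shift is bounded by $2n$, hence harmless, so for exponents $2p+k-2m\ge0$ the summand of $h^px^p(T\varphi)^{(m)}(x)/M_p$ is $\le C\,\tilde h^{2m-k}(h\tilde h^{-2})^p\,\|\varphi\|$, bounded once $\tilde h\ge h^{1/2}$. Only the finitely many $p$ with $2p+k-2m<0$ remain: for these, $x^{p+k/2-m}=y^{2p+k-2m}\le1$ when $y=x^{1/2}\ge1$, while for $0<y<1$ one applies \eqref{taylor} — valid since $\varphi^{(i)}(0)=0$ for $i\le n'$ — to get $|\varphi^{(k)}(y)|\le Cy^{2m-k}$, turning the negative power into $Cy^{2p}\le C$; this is what forces $n'\ge2n$, and the same argument (with $r=2m-k+1$) gives $T\varphi\in C^n(\R)$ and the value $n'=2n+1$.

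The main obstacle I anticipate is the bookkeeping with the half-integer exponents and parities produced by the substitution $y=x^{1/2}$, resp.\ $y=x^{2}$: one must verify that for every $p,m,k$ and every parity the estimate closes with a single choice of $\tilde h$ (and of $n'$) that is moreover compatible with the limit structures, so that $\tilde h\to0^+$ as $h\to0^+$ in the Roumieu cases. The structural fact making this work is that in case $(b)$ Faà di Bruno only produces $k\ge\lceil m/2\rceil$, keeping all exponents non-negative, so the estimate is clean with $n'=n$; in case $(a)$ genuinely negative powers of $x$ appear near the origin, which is why the Taylor estimate \eqref{taylor} and a loss $n'\sim2n$ are unavoidable. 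Everything else is the routine limit juggling of the first paragraph.
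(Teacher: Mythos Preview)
Your proposal is correct and follows essentially the same route as the paper: Fa\`a di Bruno plus the identities $N_{2p}=M_p$, $N_{2p+1}=(M_pM_{p+1})^{1/2}$, with the Taylor estimate \eqref{taylor} absorbing the negative powers in part $(a)$, and the same choice $n'=2n+1$, $\tilde h\sim h^{1/2}$ (resp.\ $\tilde h\sim (hH^n)^2$). The one noteworthy variation is your treatment of the half-integer exponents in part $(b)$: you interpolate via H\"older, $y^{q+1/2}|\varphi^{(k)}|\le (y^q|\varphi^{(k)}|)^{1/2}(y^{q+1}|\varphi^{(k)}|)^{1/2}$, which produces $N_{2q+1}=(M_qM_{q+1})^{1/2}$ directly and works for all $y>0$ without splitting at $y=1$; the paper instead splits at $x=1$, rounds $y^{(p+m)/2}\le y^{\lceil(p+m)/2\rceil}$ for $y\ge1$, and then uses the $(\operatorname{dc})$-based comparison $M_{\lceil p/2\rceil}\le C_1H^{\lceil p/2\rceil}N_p$. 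Your trick is a bit cleaner and makes the role of the $2$-interpolating sequence more transparent; otherwise the two arguments coincide. (One slip: where you write ``becomes $y^{p+2k-m}\varphi^{(k)}(y)$'' you mean $y^{(p+2k-m)/2}$, as you in fact use two lines later.)
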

\begin{proof} 
$(a)$  It suffices to show that $T : \mathcal{S}^{2n+1}_{N_p,h^{1/2}}(0,\infty) \rightarrow \mathcal{S}^{n}_{M_p,h}(0,\infty)$ is well-defined and continuous for all $n \in \N$ and $h > 0$. Let $\varphi \in  \mathcal{S}^{2n+1}_{N_p,h^{1/2}}(0,\infty)$ be arbitrary. We set $I_0 = \{0\}$ and
$$
I_m = \{ \alpha = (\alpha_1, \ldots, \alpha _m) \in \N^m \, | \, \sum_{j=1}^m j \alpha_j = m \}, \qquad m \in \Z_+.
$$
Fa\`a di Bruno's formula implies that
$$
T(\varphi)^{(m)}(x) = \sum_{\alpha \in I_m} a_\alpha \frac{\varphi^{(|\alpha|)}(x^{1/2})}{x^{m - |\alpha|/2 }}, \qquad x > 0,
$$
for all $m \leq n$, where $a_{\alpha}$ are real constants. Hence, \eqref{taylor} yields that $T(\varphi) \in C^n(\R)$ with $\operatorname{supp} \varphi \subseteq [0,\infty)$. Since $M_p = N_{2p}$ for all $p \in \N$, it holds that
\begin{align*}
\| T(\varphi) \|_{ \mathcal{S}^{n}_{M_p,h}} &\leq \max_{m \leq n} \sup_{p \in \N}\sup_{x > 0} \frac{h^px^p}{M_p}\sum_{\alpha \in I_m} |a_{\alpha}| \frac{|\varphi^{(|\alpha|)}(x^{1/2})|}{x^{m - |\alpha|/2 }} \\
& \leq \max_{m \leq n}\sup_{p \in \N} \sup_{0 <x < 1}  \frac{h^p}{M_p}\sum_{\alpha \in I_m} |a_{\alpha}| \frac{|\varphi^{(|\alpha|)}(x^{1/2})|}{x^{m - |\alpha|/2 }}  \\
&+  \max_{m \leq n}\sup_{p \in \N} \sup_{x \geq 1}   \sum_{\alpha \in I_m} |a_{\alpha}| \frac{h^px^p|\varphi^{(|\alpha|)}(x^{1/2})|}{M_p} \\
&\leq e^{M(h)}\max_{m \leq n} \left(\sum_{\alpha \in I_m} |a_{\alpha}| \right) \| \varphi^{(2m)}\|_{L^\infty([0,1])} +   \left(\max_{m \leq n} \sum_{\alpha \in I_m} |a_{\alpha}| \right) \|\varphi \|_{ \mathcal{S}^{n}_{N_p,h^{1/2}}}\\
&\leq C \|\varphi \|_{ \mathcal{S}^{2n+1}_{N_p,h^{1/2}}}.
\end{align*}

$(b)$ Lemma \ref{2-interpolating}$(a)$ yields that $(N_p)_{p \in \N}$ satisfies $(\operatorname{dc})$. We may assume without loss of generality that the constants $C_0$ and $H$ occuring in $(\operatorname{dc})$ are the same for $(M_p)_{p \in \N}$ and $(N_p)_{p \in \N}$. It suffices to show that $T : \mathcal{S}^{n}_{M_p,H^{2n+1}h^2}(0,\infty) \rightarrow \mathcal{S}^{n}_{N_p,h}(0,\infty)$ is well-defined and continuous  for all $n \in \N$ and $h > 0$. Set $l = H^{2n+1}h^2$. Let $\varphi \in  \mathcal{S}^{n}_{M_p,l}(0,\infty)$ be arbitrary. Clearly, $T(\varphi) \in C^n(\R)$ with $\operatorname{supp} \varphi \subseteq [0,\infty)$. Fa\`a di Bruno's formula implies that
$$
T(\varphi)^{(m)}(x) = \sum_{j = 0}^{\lfloor m/2 \rfloor} a_j\varphi^{(m-j)}(x^2)x^{m-2j}, \qquad  x > 0,
$$
for all $m \leq n$, where $a_j$ are positive constants. It holds that $N_{p+q} \leq C_0^q H^{q(q-1)/2} H^{pq}N_p$ for all  $p,q \in \N$.  Moreover, there is $C_1 > 0$ such that $M_{\lceil p/2 \rceil} \leq C_1 H^{\lceil p/2 \rceil} N_p$ for all $p \in \N$. Therefore,
\begin{align*}
\| T(\varphi) \|_{ \mathcal{S}^{n}_{N_p,h}} &\leq  \max_{m \leq n} \sup_{p \in \N}\sup_{x > 0} \frac{h^px^p}{N_p} \sum_{j = 0}^{\lfloor m/2 \rfloor} a_j|\varphi^{(m-j)}(x^2)|x^{m-2j} \\
& \leq\max_{m \leq n} \sup_{p \in \N}\sup_{0 <x < 1} \frac{h^p}{N_p} \sum_{j = 0}^{\lfloor m/2 \rfloor} a_j |\varphi^{(m-j)}(x^2)| \\
&+ \max_{m \leq n}\sup_{p \in \N} \sup_{ x \geq 1} \sum_{j = 0}^{\lfloor m/2 \rfloor} a_j\frac{h^px^{p+m}|\varphi^{(m-j)}(x^2)|}{N_p}\\
&\leq e^{N(h)}\left(\max_{m \leq n}\sum_{j = 0}^{\lfloor m/2 \rfloor} a_j\right)\| \varphi \|_{ \mathcal{S}^{n}_{M_p,l}} \\
&+\max_{m \leq n} \left(\sum_{j = 0}^{\lfloor m/2 \rfloor} a_j\right)  \| \varphi \|_{ \mathcal{S}^{n}_{M_p,l}} \sup_{p \in \N} \frac{h^pM_{\lceil(p+m)/2\rceil}}{l^{\lceil(p+m)/2\rceil}N_p}\\
&\leq C\| \varphi \|_{ \mathcal{S}^{n}_{M_p,l}}. 
\end{align*}
\end{proof}

\section{A functional analytic tool}\label{sect-FA}
In this section, we show an abstract result about the existence of a continuous linear right inverse that it is tailor-made to prove Proposition \ref{moment-flat} below. We start with the following simple observation.
\begin{lemma}\label{surjectivity-kernel}
Let $E$, $F$ and $G$ be vector spaces  and let $T: E \rightarrow F$ and $S: E \rightarrow G$ be linear mappings. If  both $T: E \rightarrow F$  and  $S_{|\ker T} : \ker T \rightarrow G$ are surjective, then $T_{|\ker S} : \ker S \rightarrow F$ is also surjective.
\end{lemma}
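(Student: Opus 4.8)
The statement is a purely algebraic diagram-chase, so the plan is to argue directly with elements. Let me set up the goal: I am given that $T\colon E\to F$ is surjective and that the restriction $S|_{\ker T}\colon \ker T\to G$ is surjective, and I want to show that $T|_{\ker S}\colon \ker S\to F$ is surjective; that is, for every $y\in F$ I must produce some $x\in E$ with $Sx=0$ and $Tx=y$.

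The key construction is a two-step correction. First I would pick, using surjectivity of $T$, any preimage $x_0\in E$ with $T x_0=y$. This $x_0$ need not lie in $\ker S$, so I consider the obstruction $Sx_0\in G$. Now I invoke the second hypothesis: since $S|_{\ker T}$ is surjective onto $G$, there exists $z\in\ker T$ with $Sz=Sx_0$. I then set $x:=x_0-z$. By construction $Tx=Tx_0-Tz=y-0=y$ because $z\in\ker T$, and $Sx=Sx_0-Sz=Sx_0-Sx_0=0$, so $x\in\ker S$. Hence $x$ is the desired preimage, and $T|_{\ker S}$ is surjective.

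There is really no obstacle here: the argument uses only linearity of $T$ and $S$ and the two surjectivity hypotheses, and no topology, finite-dimensionality, or exactness beyond what is stated. The only point worth stating carefully is that $z$ can be chosen in $\ker T$ (not merely in $E$), which is exactly what the hypothesis on $S|_{\ker T}$ provides and is what makes the correction $x_0-z$ preserve the equation $Tx=y$. I would write this up in three or four lines.
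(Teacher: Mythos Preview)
Your argument is correct and is essentially identical to the paper's own proof: pick a preimage under $T$, correct it by an element of $\ker T$ hitting the same image under $S$, and observe that the difference lies in $\ker S$ while still mapping to the given target. Only the variable names differ.
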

\begin{proof}
 Let $x \in F$ be arbitrary. Choose $y \in E$ such that $T(y) = x$ and $z \in \ker T$  such that $S(z) = S(y)$. Then, $y-z \in \ker S$ and $T(y-z) = T(y) = x$.
\end{proof}
Now suppose that $E$, $F$ and $G$ are lcHs and that $T$ and $S$ are continuous linear mappings. If both $T$  and  $S_{|\ker T}$ have a continuous linear right inverse, it is clear from the proof of Lemma \ref{surjectivity-kernel} that also $T_{|\ker S}$ has a continuous linear right inverse. Our goal is to show that, under suitable extra conditions on $F$ and $T$,  $T_{|\ker S}$ has a continuous linear right inverse if one merely assumes that  $S_{|\ker T}$ lifts bounded sets. We need some preparation to formulate and prove this result. 

Let $E$ and $F$ be lcHs. We denote by $\csn(E)$ the set consisting of all continuous seminorms on $E$ and by $L(E,F)$ the space consisting of all continuous linear mappings from $E$ to $F$. 
Let $(x_n)_{n \in \N} \subset F$ and  $(x'_n)_{n \in \N} \subset F'$. The pair $((x_n)_{n \in \N}, (x'_n)_{n \in \N})$ is said to be a \emph{Schauder frame (in $F$)} \cite[Def.\ 1.1]{B-F-G-R} if 
$$
x = \sum_{n = 0}^\infty \langle x'_n,x \rangle x_n
$$  
for all $x \in F$. A Schauder frame $((x_n)_{n \in \N}, (x'_n)_{n \in \N})$ is called \emph{absolute} if for all $p \in \csn(F)$ there is $q \in \csn(F)$ such that
$$
\sum_{n = 0}^\infty |\langle x'_n,x \rangle|  p(x_n) \leq q(x)
$$
for all $x \in F$. Every absolute Schauder basis \cite[p.\ 340]{M-V} canonically determines a Schauder frame. We need the following lemma.
\begin{lemma}\label{right-inverse-absolute-Schauder-frame}
Let $E$ and $F$ be lcHs and let $T \in L(E,F)$. Suppose that $E$ is sequentially complete and that $F$ possesses an absolute Schauder frame  $((x_n)_{n \in \N}, (x'_n)_{n \in \N})$. If there is a sequence  $(y_n)_{n \in \N} \subset E$ such that $T(y_n) = x_n$ for all $n \in \N$ and for all $p \in \csn(E)$ there is $q \in \csn(F)$ such that $p(y_n) \leq q(x_n)$ for all $n \in \N$, then $T$ has a continuous linear right inverse.

\end{lemma}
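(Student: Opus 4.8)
The plan is to define the candidate right inverse explicitly by the formula $R(x) := \sum_{n=0}^\infty \langle x'_n, x\rangle y_n$ for $x \in F$, where $(y_n)_{n\in\N}$ is the given lifting sequence, and then verify that this series converges in $E$, that $R$ is continuous, that $R$ is linear, and that $T \circ R = \mathrm{id}_F$. The key structural inputs are: the absoluteness of the Schauder frame in $F$ (to control the scalar coefficients $\langle x'_n,x\rangle$ against seminorms of the $x_n$), the domination hypothesis $p(y_n) \le q(x_n)$ (to transfer that control from the $x_n$ to the $y_n$), and the sequential completeness of $E$ (to ensure the partial sums actually converge to an element of $E$).

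First I would show convergence. Fix $x \in F$ and $p \in \csn(E)$. By hypothesis there is $q \in \csn(F)$ with $p(y_n) \le q(x_n)$ for all $n$, and by absoluteness of the frame there is $r \in \csn(F)$ with $\sum_{n=0}^\infty |\langle x'_n,x\rangle|\, q(x_n) \le r(x) < \infty$. Hence $\sum_{n=0}^\infty |\langle x'_n,x\rangle|\, p(y_n) \le r(x)$, so the partial sums $\sum_{n=0}^N \langle x'_n,x\rangle y_n$ form a Cauchy sequence in $E$ with respect to every continuous seminorm; by sequential completeness of $E$ the series converges to a well-defined element $R(x) \in E$, and passing to the limit gives $p(R(x)) \le r(x)$ for all $p \in \csn(E)$, which is precisely the continuity estimate for $R$ (choosing $r$ for each $p$). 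Linearity of $R$ is immediate from linearity of each $x'_n$ and of the (convergent) series.

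Next I would check that $R$ is a right inverse of $T$. Since $T \in L(E,F)$ is continuous and linear, it commutes with the convergent series: $T(R(x)) = T\!\left(\sum_{n=0}^\infty \langle x'_n,x\rangle y_n\right) = \sum_{n=0}^\infty \langle x'_n,x\rangle T(y_n) = \sum_{n=0}^\infty \langle x'_n,x\rangle x_n = x$, where the last equality is the defining property of the Schauder frame $((x_n)_{n\in\N},(x'_n)_{n\in\N})$ in $F$. Thus $T \circ R = \mathrm{id}_F$, so $T$ has a continuous linear right inverse, namely $R$.

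The only subtlety — the "main obstacle" — is making sure the series defining $R(x)$ genuinely converges \emph{in $E$} rather than merely being formally summable: this is where sequential completeness of $E$ is essential, since the frame absoluteness only gives absolute summability of the seminorm values, and one must know that a Cauchy sequence (here even an unconditionally Cauchy net of partial sums) has a limit in $E$. Everything else is a routine interchange of $T$ with a convergent series (valid by continuity and linearity of $T$) and bookkeeping with seminorms; no deeper structural result is needed beyond the three stated hypotheses.
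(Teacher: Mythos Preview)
Your proof is correct and follows exactly the same approach as the paper: define $R(x) = \sum_{n=0}^\infty \langle x'_n, x\rangle y_n$, use absoluteness of the frame together with the domination hypothesis to show the partial sums are Cauchy in $E$, invoke sequential completeness, and verify that $R$ is a continuous linear right inverse of $T$. In fact your write-up is considerably more detailed than the paper's, which simply states that the partial sums are Cauchy, defines $R$, and asserts that it is a continuous linear right inverse.
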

\begin{proof}
For each $x \in F$ the sequence $\left( \sum_{n = 0}^N  \langle x'_n,x \rangle y_n  \right)_{N \in \N}$ is Cauchy in $E$. Since $E$ is sequentially complete, we have that 
$$
R(x) = \sum_{n = 0}^\infty  \langle x'_n,x \rangle y_n \in E.
$$
Then, $R: F \rightarrow E$ is a continuous linear right inverse of $T$.
\end{proof}
\begin{proposition}\label{functional-analysis}
Let $E$, $F$ and $G$ be lcHs  and let $T \in L(E,F)$ and $S \in L(E,G)$.  Endow $\ker T$ and $\ker S$ with the relative topology induced by $E$.  Suppose that the following conditions are satisfied:
\begin{itemize}
\item[$(1)$] $E$ is sequentially complete.
\item[$(2)$] $F$ possesses an absolute Schauder frame.
\item[$(3)$] $S_{|\ker T} : \ker T \rightarrow G$ lifts bounded sets, that is, for every $B \subset G$ bounded there is $A \subset \ker T$ bounded such that $S(A) = B$.
\item[$(4)$] There is a lcHs $E_0$ with the following properties:
\begin{itemize}
\item[$(4.1)$] $E_0 \subset E$ with continuous inclusion mapping.
\item[$(4.2)$] $T_{|E_0} : E_0 \rightarrow F$ has a continuous linear right inverse.
\item[$(4.3)$] $S_{|E_0} : E_0 \rightarrow G$ is locally bounded, that is, there is a neighbourhood $U$ of $0$ in $E_0$ such that $S(U)$ is bounded in $G$.
\end{itemize}
\end{itemize}
Then, $T_{| \ker S} : \ker S \rightarrow F$ has a continuous linear right inverse.
\end{proposition}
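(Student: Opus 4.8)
The plan is to deduce the statement from Lemma~\ref{right-inverse-absolute-Schauder-frame}, applied to $T_{|\ker S}:\ker S\to F$. Since $S$ is continuous and $G$ is Hausdorff, $\ker S$ is closed in $E$ and hence sequentially complete by $(1)$, while $F$ carries an absolute Schauder frame $((x_n)_{n\in\N},(x'_n)_{n\in\N})$ by $(2)$. So it suffices to produce a sequence $(y_n)_{n\in\N}\subseteq\ker S$ with $T(y_n)=x_n$ for all $n$ such that for every $p\in\csn(\ker S)$ there is $q\in\csn(F)$ with $p(y_n)\le q(x_n)$ for all $n$; since every continuous seminorm on $\ker S$ is dominated by the restriction of one on $E$, it is enough to arrange this for $p\in\csn(E)$.

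First I would lift the frame vectors through $T$. Let $R_0\in L(F,E_0)$ be a continuous linear right inverse of $T_{|E_0}$ (using $(4.2)$), and set $z_n:=R_0(x_n)$, so that $z_n\in E_0\subseteq E$ and $T(z_n)=x_n$. By $(4.1)$, for each $p\in\csn(E)$ the map $p\circ R_0$ is a continuous seminorm on $F$ and $p(z_n)\le(p\circ R_0)(x_n)$. The $z_n$ need not lie in $\ker S$, so I would correct them to $y_n:=z_n-w_n$ with $w_n\in\ker T$ and $S(w_n)=S(z_n)$; such $w_n$ exist because $(3)$ forces $S_{|\ker T}$ to be onto $G$ (a singleton is bounded). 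The whole point is to choose the $w_n$ so that $p(w_n)$ is controlled by a continuous seminorm of $x_n$.

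The key — and most delicate — step is the normalization that makes $(3)$ applicable. By $(4.2)$ and $(4.3)$, the map $S\circ R_0:F\to G$ is locally bounded, so there is $p_0\in\csn(F)$ for which $B_0:=(S\circ R_0)(\{x\in F : p_0(x)\le 1\})$ is bounded in $G$. Write $\mu_n:=p_0(x_n)$. If $\mu_n=0$, then $S(z_n)\in cB_0$ for all $c>0$, hence $S(z_n)=0$ (as $G$ is Hausdorff and $B_0$ is bounded), and I set $w_n:=0$. If $\mu_n\ne 0$, then $x_n/(2\mu_n)\in\{p_0\le 1\}$, so $S(z_n)/(2\mu_n)\in B_0$; thus $B:=\{S(z_n)/(2\mu_n) : \mu_n\ne 0\}\cup\{0\}$ is bounded and, since $S_{|\ker T}$ is onto, contained in $S(\ker T)$. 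By $(3)$ there is a bounded set $A\subseteq\ker T$ with $S(A)=B$; I would pick $a_n\in A$ with $S(a_n)=S(z_n)/(2\mu_n)$ and put $w_n:=2\mu_n a_n$. In all cases $w_n\in\ker T$, $S(w_n)=S(z_n)$, and for each $p\in\csn(E)$ one has $p(w_n)\le 2\mu_n\sup_{a\in A}p(a)=q_2(x_n)$, where $q_2:=2\big(\sup_{a\in A}p(a)\big)p_0$ is a continuous seminorm on $F$ because $A$ is bounded in $E$.

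Finally, $y_n:=z_n-w_n$ satisfies $y_n\in\ker S$, $T(y_n)=x_n$, and $p(y_n)\le p(z_n)+p(w_n)\le(p\circ R_0)(x_n)+q_2(x_n)=:q(x_n)$ with $q\in\csn(F)$, for every $p\in\csn(E)$. Lemma~\ref{right-inverse-absolute-Schauder-frame}, with $\ker S$ in the role of $E$ and $T_{|\ker S}$ in the role of $T$, then produces a continuous linear right inverse of $T_{|\ker S}:\ker S\to F$. The main obstacle is precisely the normalization by $p_0(x_n)$: condition $(3)$ only lifts \emph{bounded} sets, so one cannot feed it the (possibly unbounded) set $\{S(z_n)\}$ directly, and one must instead rescale, lift the bounded set $\{S(z_n)/(2\mu_n)\}$, and observe that the resulting estimate on $w_n$ is still of the form $q_2(x_n)$ with $q_2$ a continuous seminorm on $F$.
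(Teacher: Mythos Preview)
Your proof is correct and follows essentially the same strategy as the paper: reduce to Lemma~\ref{right-inverse-absolute-Schauder-frame}, lift the frame vectors via the right inverse from $(4.2)$, then correct each lift by a suitably rescaled element of $\ker T$ obtained from $(3)$. The only cosmetic difference is that you normalize by a seminorm $p_0\in\csn(F)$ (pulled back through $R_0$), whereas the paper normalizes by a seminorm $p_0\in\csn(E_0)$ coming directly from $(4.3)$; both choices lead to the same estimate.
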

\begin{proof}
Suppose that $((x_n)_{n \in \N}, (x'_n)_{n \in \N})$ is an absolute Schauder frame in $F$. Since $\ker S$ is sequentially complete (as a closed subspace of $E$), it suffices to construct a sequence $(y_n)_{n \in \N} \subset \ker S$ satisfying the assumptions of Lemma \ref{right-inverse-absolute-Schauder-frame}. By $(4.2)$, there is a sequence $(y_{0,n})_{n \in \N} \subset E_0$  such that $T(y_{0,n}) = x_n$ for all $n \in \N$ and for all $p \in \csn(E_0)$ there is $q \in \csn(F)$ such that $p(y_{0,n}) \leq q(x_n)$ for all $n \in \N$. $(4.3)$ means that there is $p_0 \in \csn(E_0)$ such that $S_{|E_0} : (E_0,p_0) \rightarrow G$ is continuous, where $(E_0,p_0)$ stands for the vector space $E_0$ endowed with the topology generated by the single seminorm $p_0$. In particular, $S(y) = 0$ for all $y \in E_0$ with $p_0(y) = 0$. We set
$$
s_n =
\left\{
	\begin{array}{ll}
		\frac{S(y_{0,n})}{p_0(y_{0,n})},  &  \mbox{$p_0(y_{0,n}) \neq 0$}, \\ \\
		0,  &   \mbox{$p_0(y_{0,n}) = 0$},
	\end{array}
\right.
$$
for $n \in \N$. Since the sequence $(s_n)_{n \in \N}$ is bounded in $G$, $(3)$ yields that there is a bounded sequence $(z_n)_{n \in \N} \subset  \ker T$ such that $S(z_n) = s_n$ for all $n \in \N$. Set $y_n = y_{0,n} - p_0(y_{0,n})z_n$ for all $n \in \N$. Then, $y_n \in \ker S$ and $T(y_n) = T(y_{0,n}) = x_n$ for all $n \in \N$. Finally, let $p \in \csn(E)$ be arbitrary. Since the sequence $(z_n)_{n \in \N}$ is bounded in $E$, there is $C > 0$ such that
$$
p(y_n) \leq  p(y_{0,n}) + p_0(y_{0,n})p(z_n) \leq p_{|E_0}(y_{0,n}) + Cp_0(y_{0,n}) \leq  p'(y_{0,n}) 
$$
for all $n \in \N$, where $p' = (1+C) \max\{p_{|E_0}, p_0\}$. $(4.1)$ yields that $p' \in \csn(E_0)$. Hence, there is $q \in \csn(F)$ such that $p(y_n) \leq  p'(y_{0,n}) \leq q(x_n)$ for all $n \in \N$.
\end{proof}
\begin{remark}\label{Schwartz}
If $E$ is an $(FS)$-space (= Fr\'echet-Schwartz space) and $G$ is a Fr\'echet space, then condition $(3)$ in Proposition \ref{functional-analysis} may be relaxed to ``$S_{|\ker T} : \ker T \rightarrow G$ is surjective". Indeed, as a closed subspace of an $(FS)$-space is again an $(FS)$-space, $\ker (S_{|\ker T})$ is an $(FS)$-space. Since every  $(FS)$-space is quasinormable, the result follows from the fact that a surjective continuous linear mapping $Q: X \rightarrow Y$ between two Fr\'echet spaces $X$ and $Y$ lifts bounded sets if $\ker Q$ is quasinormable \cite[Lemma 26.13]{M-V}.
\end{remark}
\section{The Borel problem in  $\Sb_{(N_p),0}(\R)$}\label{sub-Eidelheit} 
Given a weight sequence $(N_p)_{p \in \N}$, we define the following closed subspace of $\mathcal{S}_{(N_p)}(\R)$
$$
\mathcal{S}_{(N_p),0}(\R) :=  \{ \varphi \in \mathcal{S}_{(N_p)}(\R)  \, | \, \mu_n(\varphi) = 0 \mbox{ for all $n \in \N$} \}
$$
and endow it with the relative topology induced by $\mathcal{S}_{(N_p)}(\R)$. Hence, it becomes a Fr\'echet space. The goal of this section is to show the following result; it will be used in the proof of Proposition \ref{moment-flat} below.
\begin{proposition}\label{Eidelheit-Borel}
Let $(N_p)_{p \in \N}$ be a weight sequence satisfying $(\operatorname{lc})$, $(\operatorname{dc})$ and $(\gamma_1)$. Then, 
$
\mathcal{B}: \mathcal{S}_{(N_p),0}(\R) \rightarrow \C^\N
$
 is surjective.
\end{proposition}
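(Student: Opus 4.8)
The plan is to obtain this from Eidelheit's interpolation theorem \cite{M-V}. Set $E := \mathcal{S}_{(N_p),0}(\R)$; as a closed subspace of the Fr\'echet space $\mathcal{S}_{(N_p)}(\R)$ it is itself Fr\'echet, with the increasing fundamental system of seminorms $\pi_n := \|\,\cdot\,\|_{\mathcal{S}^n_{N_p,n}}$ ($n\in\N$), and the evaluations $u_k\in E'$, $u_k(\varphi) := \varphi^{(k)}(0)$, are continuous with $\mathcal{B}\varphi = (u_k(\varphi))_{k\in\N}$. By Eidelheit's theorem, $\mathcal{B} : E \to \C^\N$ is surjective if and only if $(u_k)_{k\in\N}$ is linearly independent in $E'$ and, for every $n\in\N$, the space of $u\in\operatorname{span}\{u_k : k\in\N\}$ that are $\pi_n$-continuous is finite-dimensional. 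Both requirements will follow from a single construction, described next.

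The construction I would aim for is: for every $n\in\N$ there is a null sequence $(\rho_{n,L})_{L\in\N}$ such that for each $L\in\N$ there is $\varphi_L\in\mathcal{S}_{(N_p),0}(\R)$ with $\varphi_L^{(i)}(0)=\delta_{iL}$ for all $i\in\N$ and $\pi_n(\varphi_L)\le\rho_{n,L}$; moreover, for every finite jet $(c_0,\dots,c_m)$ there is some $\varphi\in\mathcal{S}_{(N_p),0}(\R)$ with $\varphi^{(i)}(0)=c_i$ for $0\le i\le m$. The second part gives the linear independence of $(u_k)$. For the finite-dimensionality: if $u=\sum_{k=0}^L c_k u_k$ with $c_L\ne0$ were $\pi_n$-continuous with constant $C$, then $|c_L|=|u(\varphi_L)|\le C\rho_{n,L}$, so the top index $L$ of such a $u$ is bounded; combining this with the weak-$*$ compactness of the polar of the $\pi_n$-ball yields that the $\pi_n$-continuous functionals in $\operatorname{span}\{u_k\}$ form a finite-dimensional subspace.

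To build $\varphi_L$ I would first solve the Borel part and then correct the moments. Since $(N_p)_{p\in\N}$ satisfies $(\operatorname{lc})$ and $(\gamma_1)$ (hence $(\gamma)$), Petzsche's Borel theorem (Theorem \ref{Borel-P}$(a)$) provides a continuous linear right inverse $R$ of $\mathcal{B} : \mathcal{D}^{(N_p)}_{[-1,1]} \to \Lambda_{(N_p)}$; with $\psi_L := R(e_L)\in\mathcal{D}^{(N_p)}_{[-1,1]}\subset\mathcal{S}_{(N_p)}(\R)$ one gets $\psi_L^{(i)}(0)=\delta_{iL}$ for all $i$ and, using the continuity of $R$ together with the continuous embedding $\mathcal{D}^{(N_p)}_{[-1,1]}\hookrightarrow\mathcal{S}_{(N_p)}(\R)$, $\pi_n(\psi_L)\le C_n\, r^{\,L}/N_L$ for a suitable $r=r(n)>0$; since $m_p=N_p/N_{p-1}\to\infty$, the sequence $(N_L)$ outgrows every geometric progression, so $\pi_n(\psi_L)\to0$, and in particular the moment sequence $(\mu_p(\psi_L))_p$ — dominated by $\|\psi_L\|_\infty$ — is very small. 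It then remains to subtract a $\chi_L\in\mathcal{S}_{(N_p)}(\R)$ that is flat at the origin (so as not to perturb the jet) with $\mu_p(\chi_L)=\mu_p(\psi_L)$ for all $p$ and with $\pi_n(\chi_L)=O(\pi_n(\psi_L))$; then $\varphi_L:=\psi_L-\chi_L$ works. Passing through the Fourier transform (a topological isomorphism $\mathcal{S}_{(N_p)}(\R)\to\mathcal{S}^{(N_p)}(\R)$, valid under $(\operatorname{lc})$ and $(\operatorname{dc})$), producing such a $\chi_L$ is a Borel-type problem on the moment-flat subspace of $\mathcal{S}^{(N_p)}(\R)$ for the prescribed (and tiny) jet $(i^p\mu_p(\psi_L))_p$. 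The same scheme, started from a classical-Borel preimage $\psi\in\mathcal{D}(\R)\subset\mathcal{S}_{(N_p)}(\R)$ of an arbitrary finite jet, yields the functions needed for the linear independence.

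The main obstacle is the moment-correction step, which is self-referential: making $\chi_L$ flat at $0$ with prescribed moments is again a Borel problem on a moment-flat space, so no one-step fix exists — a compactly supported correction is ruled out by uniqueness of the Laplace transform and by M\"untz-type density. I would resolve this by a successive-approximation argument, alternately correcting the moments (which perturbs the jet) and the jet (which perturbs the moments): each correction is produced by Petzsche's right inverse on a small interval $[-\varepsilon,\varepsilon]$, which one can arrange to contract the error (the moments of functions supported in $[-\varepsilon,\varepsilon]$ pick up a factor $\varepsilon^{p+1}$), so the resulting series converges in $\mathcal{S}_{(N_p)}(\R)$ and the total correction has norm comparable to the initial error. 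This is exactly where the hypotheses enter: only $(\gamma_1)$ is needed (not $(\gamma_2)$) because the sequences one must interpolate are those of compactly supported functions — negligible on the $\Lambda_{(N_p)}$-scale — and the target $\C^\N$, rather than $\Lambda_{(N_p)}$, is what lets the arbitrary part of the jet be absorbed at the outset by the classical Borel theorem; meanwhile $(\operatorname{lc})$, $(\operatorname{dc})$ and non-quasianalyticity supply the Fourier isomorphism, the flat ultradifferentiable functions, and the continuity constants that make $\rho_{n,L}\to0$.
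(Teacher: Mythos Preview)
Your approach via Eidelheit is the same starting point as the paper's, but the paper executes it in the \emph{ambient} space rather than directly in the subspace. It applies the subspace variant of Eidelheit (Proposition~\ref{Eidelheit-consequence}) with $E=\mathcal{S}_{(N_p)}(\R)$, $F=\mathcal{S}_{(N_p),0}(\R)$ and $x'_n=(-1)^n\delta^{(n)}$, so that conditions (1) and (2) become statements about finite combinations of $\delta^{(n)}$'s \emph{modulo} the annihilator $F^\perp$. The decisive observation is that $F^\perp$ can be computed: since $(\gamma_1)$ makes $\mathcal{M}:\mathcal{S}_{(N_p)}(\R)\to\Lambda_{(N_p)}$ surjective (Fourier plus Petzsche on the whole space), dualising the short exact sequence shows that $F^\perp$ is precisely the space of ultrapolynomials of class $(N_p)$ (Lemma~\ref{description-kernel}). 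Both Eidelheit conditions then reduce to a classical distribution-theoretic fact (Lemma~\ref{deltas}): a finite combination $\sum c_n\delta^{(n)}$ cannot equal a sum $\sum_{n\le\nu}g_n^{(n)}$ with $g_n\in L^1_{\loc}$ unless $c_n=0$ for $n\ge\nu$; ultrapolynomials, being smooth, are absorbed into the $g_0$ term, and the structure theorem for equicontinuous subsets of $\mathcal{S}'_{(N_p)}(\R)$ (Lemma~\ref{structural}) supplies the required $\nu$. No moment-flat test functions are ever constructed.

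Your direct attack, on the other hand, has a genuine gap in the Eidelheit verification. From $|c_L|=|u(\varphi_L)|\le C\rho_{n,L}$ you cannot conclude that the top index $L$ is bounded: for $u\in U_n^\circ$ the constant $C$ is $1$, but $c_L$ may simply be a nonzero number smaller than $\rho_{n,L}$, and this can happen for arbitrarily large $L$. What is really required is that for each $L$ beyond some $\nu(n)$ there exist $\varphi_L\in\mathcal{S}_{(N_p),0}(\R)$ with jet $e_L$ and $\pi_n(\varphi_L)$ \emph{arbitrarily small}; the appeal to weak-$*$ compactness does not produce this. To get it you would have to run your iteration with an additional scaling parameter, and here the second difficulty bites: the successive-approximation scheme is precisely the self-referential problem you flagged, and the contraction is not automatic---rescaling Petzsche's right inverse to land in $\mathcal{D}^{(N_p)}_{[-\varepsilon,\varepsilon]}$ worsens the continuity constants in the $\mathcal{S}_{(N_p)}$-seminorms, so the claimed gain of $\varepsilon^{p+1}$ on the moments competes against a loss elsewhere. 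This may ultimately be workable, but it is substantially harder than the paper's route, which replaces the entire construction by an order-of-distribution argument on the dual side.
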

If $(N_p)_{p \in \N}$ satisfies $\operatorname{(lc)}$ and $\operatorname{(dc)}$, \cite[Prop.\ 3.4]{Komatsu} implies that $\mathcal{S}_{(N_p)}(\R)$ is an $(FS)$-space. Hence, Proposition \ref{Eidelheit-Borel}
can be strengthened as follows (cf.\ Remark \ref{Schwartz}).
\begin{proposition}\label{Eidelheit-Borel-1}
Let $(N_p)_{p \in \N}$ be a weight sequence satisfying $(\operatorname{lc})$, $(\operatorname{dc})$ and $(\gamma_1)$. Then, 
$
\mathcal{B}: \mathcal{S}_{(N_p),0}(\R) \rightarrow \C^\N
$
lifts bounded sets.
\end{proposition}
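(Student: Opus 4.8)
The plan is to obtain Proposition \ref{Eidelheit-Borel-1} as a soft strengthening of Proposition \ref{Eidelheit-Borel}, exactly along the lines already indicated in Remark \ref{Schwartz}. First I would record that, since $(N_p)_{p \in \N}$ satisfies $(\operatorname{lc})$ and $(\operatorname{dc})$, the space $\mathcal{S}_{(N_p)}(\R)$ is an $(FS)$-space by \cite[Prop.\ 3.4]{Komatsu} (this is precisely the remark made just before the statement of Proposition \ref{Eidelheit-Borel-1}); consequently its closed subspace $\mathcal{S}_{(N_p),0}(\R)$ is again an $(FS)$-space.

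Next I would invoke Proposition \ref{Eidelheit-Borel} to get that $\mathcal{B}: \mathcal{S}_{(N_p),0}(\R) \rightarrow \C^\N$ is surjective. Its kernel is a closed subspace of the $(FS)$-space $\mathcal{S}_{(N_p),0}(\R)$, hence is itself an $(FS)$-space, and in particular quasinormable, since every $(FS)$-space is a Schwartz space and Schwartz spaces are quasinormable. Both $\mathcal{S}_{(N_p),0}(\R)$ and $\C^\N$ are Fr\'echet spaces, so one may apply \cite[Lemma 26.13]{M-V}: a surjective continuous linear mapping between Fr\'echet spaces lifts bounded sets whenever its kernel is quasinormable. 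This gives that $\mathcal{B}$ lifts bounded sets, which is the assertion.

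I do not expect a genuine obstacle here: the whole analytic substance is contained in Proposition \ref{Eidelheit-Borel}, which is assumed, and the only additional ingredients are the standard permanence facts that closed subspaces of $(FS)$-spaces are $(FS)$-spaces and that $(FS)$-spaces are quasinormable, together with the cited lifting lemma. The one point worth stating carefully is the identification of $\mathcal{S}_{(N_p),0}(\R)$ as a \emph{closed} subspace of $\mathcal{S}_{(N_p)}(\R)$ (so that it inherits the $(FS)$-property), which is immediate because it is cut out by the vanishing of the continuous linear functionals $\varphi \mapsto \mu_n(\varphi)$, $n \in \N$; this is already noted in Section \ref{sub-Eidelheit}.
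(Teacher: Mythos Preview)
Your proposal is correct and follows exactly the paper's own argument: the paper deduces Proposition \ref{Eidelheit-Borel-1} from Proposition \ref{Eidelheit-Borel} via the reasoning in Remark \ref{Schwartz}, using that $\mathcal{S}_{(N_p)}(\R)$ is an $(FS)$-space by \cite[Prop.\ 3.4]{Komatsu}, so its closed subspace $\mathcal{S}_{(N_p),0}(\R)$ and the kernel of $\mathcal{B}$ are $(FS)$-spaces, hence quasinormable, and then \cite[Lemma 26.13]{M-V} upgrades surjectivity to lifting bounded sets. Your write-up is essentially a clean expansion of the one-line justification the paper gives.
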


The proof of Proposition \ref{Eidelheit-Borel} is based on the following variant of Eidelheit's theorem. 
\begin{proposition}\label{Eidelheit-consequence} Let $E$ be a Fr\'echet space and let $(x'_n)_{n \in \N} \subset E'$. Let $F$ be a closed subspace of $E$ and set
$$
F^\perp = \{ x' \in E' \, | \, \langle x', x \rangle = 0 \mbox{ for all $x \in F$} \}.
$$
 The mapping
$
F \rightarrow \C^\N: x \rightarrow (\langle x'_n, x \rangle)_{n \in \N}
$
is surjective if and only if
\begin{itemize} 
\item[$(1)$] For all $N \in \N$ and $c_0, \ldots, c_N \in \C$ it holds that
$$
\sum_{n = 0}^N c_n x'_n \in  F^\perp
$$
implies that $c_0 = \cdots = c_N = 0$. 
\item[$(2)$] For every $B \subset E'$ equicontinuous there is $\nu \in \N$ such that for all $N \geq \nu$ and  $c_0, \ldots, c_N \in \C$ it holds that
$$
\sum_{n = 0}^N c_n x'_n \in  B + F^\perp
$$
implies that $c_{\nu} = \cdots = c_N = 0$. 
\end{itemize}
\end{proposition}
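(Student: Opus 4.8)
The plan is to deduce the statement from the classical Eidelheit theorem (see, e.g., \cite{M-V}) applied to the Fr\'echet space $F$ itself. Let $\rho : E' \to F'$ be the restriction map $x' \mapsto x'_{|F}$ and set $y'_n := \rho(x'_n) = (x'_n)_{|F} \in F'$ for $n \in \N$. Since $\C^\N$ carries the product topology, the mapping in the statement is nothing but the continuous linear map $T : F \to \C^\N$, $x \mapsto (\langle y'_n, x\rangle)_{n \in \N}$, and $F$ is a Fr\'echet space, being a closed subspace of $E$. By Eidelheit's theorem, $T$ is surjective if and only if $(1')$ for all $N \in \N$ and $c_0, \dots, c_N \in \C$, $\sum_{n=0}^N c_n y'_n = 0$ in $F'$ forces $c_0 = \cdots = c_N = 0$, and $(2')$ for every equicontinuous $B' \subseteq F'$ there is $\nu \in \N$ such that, for all $N \geq \nu$ and $c_0, \dots, c_N \in \C$, $\sum_{n=0}^N c_n y'_n \in B'$ forces $c_\nu = \cdots = c_N = 0$. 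So the task reduces to proving $(1) \Leftrightarrow (1')$ and $(2) \Leftrightarrow (2')$.

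For $(1) \Leftrightarrow (1')$ I would simply note that $\ker \rho = F^\perp$, so that $\sum_{n=0}^N c_n y'_n = 0$ in $F'$ holds exactly when $\sum_{n=0}^N c_n x'_n \in F^\perp$; thus $(1)$ and $(1')$ say the same thing. For $(2) \Leftrightarrow (2')$ the key ingredient is the Hahn--Banach fact that $\rho$ maps the equicontinuous subsets of $E'$ onto those of $F'$. If $B \subseteq U^\circ$ for some $0$-neighbourhood $U$ of $E$, then $\rho(B) \subseteq (U \cap F)^\circ$ is equicontinuous in $F'$ because $U \cap F$ is a $0$-neighbourhood of $F$. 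Conversely, given an equicontinuous $B' \subseteq F'$, I would choose a continuous seminorm $p$ on $F$ dominating $|\langle y', \cdot\rangle|$ for every $y' \in B'$; since $F$ carries the subspace topology, $p \leq \widetilde{p}_{|F}$ for some continuous seminorm $\widetilde{p}$ on $E$, and Hahn--Banach extends each $y' \in B'$ to some $x' \in E'$ with $|\langle x', \cdot\rangle| \leq \widetilde{p}$ on $E$; the set $B$ of these extensions is then equicontinuous in $E'$ with $\rho(B) = B'$.

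The remaining link is that, for finitely many scalars $c_0, \dots, c_N$ and any $B \subseteq E'$, one has $\sum_{n=0}^N c_n x'_n \in B + F^\perp$ if and only if $\sum_{n=0}^N c_n y'_n \in \rho(B)$: applying $\rho$ gives one implication, while $\sum_{n=0}^N c_n y'_n = \rho(b)$ with $b \in B$ gives $\sum_{n=0}^N c_n x'_n - b \in \ker \rho = F^\perp$ for the other. Combining this with the correspondence $B \leftrightarrow \rho(B) = B'$ converts $(2)$ into $(2')$ and back. I expect the step demanding the most care to be exactly this translation of condition $(2)$: one must check that the quantifiers over equicontinuous sets genuinely match up on the two sides --- which is where Hahn--Banach and the fact that $F$ carries the induced topology enter --- and that the summand $F^\perp$ on the $E'$-side is precisely what is absorbed upon restricting to $F$. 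Everything else is routine bookkeeping. (One could instead bypass Eidelheit's theorem and apply the closed range theorem directly --- $T$ is onto iff its transpose is injective with weak-$*$ closed range --- but then one must re-derive, via the Krein--Smulian theorem, that weak-$*$ closedness of $\operatorname{span}\{y'_n : n \in \N\}$ is equivalent to $(1')$ and $(2')$, which is essentially the same work.)
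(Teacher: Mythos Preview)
Your proof is correct and follows exactly the approach indicated in the paper: apply the classical Eidelheit theorem to the Fr\'echet space $F$ with the restricted functionals $(x'_n)_{|F}$, and then use the Hahn--Banach theorem to translate the resulting conditions on $F'$ into conditions $(1)$ and $(2)$ on $E'$ via the restriction map $\rho$ with $\ker\rho = F^\perp$. The paper merely states that the result is ``a consequence of the classical theorem of Eidelheit and the Hahn--Banach theorem''; you have supplied precisely the details that this sentence leaves implicit.
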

\begin{proof}
This is a consequence of the classical theorem of Eidelheit \cite[Thm.\ 26.27]{M-V} and the Hahn-Banach theorem.
\end{proof}
We shall prove Proposition \ref{Eidelheit-Borel} via Proposition \ref{Eidelheit-consequence} with $E = \mathcal{S}_{(N_p)}(\R)$, $F = \mathcal{S}_{(N_p),0}(\R)$  and  $(x'_n)_{n \in \N} = ((-1)^n\delta^{(n)})_{n \in \N}$). To this end, we first give an explicit description of the space $(\mathcal{S}_{(N_p),0}(\R))^\perp$ and the equicontinuous subsets of $\mathcal{S}'_{(N_p)}(\R)$. We need some preparation.

Let $(N_p)_{p \in \N}$ be a weight sequence.  An entire function $P(z) = \sum_{p = 0}^\infty b_pz^p$, $b_p \in \C$, is said to be an \emph{ultrapolynomial of class $(N_p)$} if 
\begin{equation}
\sup_{p \in \N} \frac{|b_p|N_p}{h^p} < \infty
\label{dual-sequence}
\end{equation}
for some $h > 0$. If $(N_p)_{p \in \N}$ satisfies $(\operatorname{lc})$, an entire function $P$ is an ultrapolynomial of class $(N_p)$ if and only if
$$
\sup_{z \in \C }|P(z)| e^{-N(h|z|)} < \infty
$$ 
for some $h > 0$.
\begin{lemma}\label{description-kernel}
Let $(N_p)_{p \in \N}$ be a weight sequence satisfying $(\operatorname{lc})$, $(\operatorname{dc})$ and $(\gamma_1)$. Then, $f \in \mathcal{S}'_{(N_p)}(\R)$ belongs to $(\mathcal{S}_{(N_p),0}(\R))^\perp$ if and only if there is an ultrapolynomial $P$ of class $(N_p)$ such that $f = P$ in $ \mathcal{S}'_{(N_p)}(\R)$.
\end{lemma}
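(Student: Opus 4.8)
The plan is to realize $\mathcal{S}_{(N_p),0}(\R)$ as the kernel of the moment mapping $\mathcal{M}\colon\mathcal{S}_{(N_p)}(\R)\to\Lambda_{(N_p)}$ — legitimate, since $(\operatorname{dc})$ holds, so $\mathcal{M}$ is well defined and continuous, and $\mathcal{M}\varphi=0$ means precisely $\mu_n(\varphi)=0$ for all $n$ — and then to compute the annihilator by duality. The only substantial ingredient is the surjectivity of $\mathcal{M}$ on the \emph{whole} real line. Since the Fourier transform is a topological isomorphism $\mathcal{S}_{(N_p)}(\R)\to\mathcal{S}^{(N_p)}(\R)$ (here $(\operatorname{lc})$ and $(\operatorname{dc})$ are needed) and $\widehat\varphi^{(p)}(0)=i^p\mu_p(\varphi)$, this is equivalent to surjectivity of the Borel mapping $\mathcal{B}\colon\mathcal{S}^{(N_p)}(\R)\to\Lambda_{(N_p)}$; and because $|x^m\varphi^{(p)}(x)|\le|\varphi^{(p)}(x)|$ whenever $\operatorname{supp}\varphi\subseteq[-1,1]$, the space $\mathcal{D}^{(N_p)}_{[-1,1]}$ embeds continuously into $\mathcal{S}^{(N_p)}(\R)$. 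Hence Theorem~\ref{Borel-P}$(a)$ — applicable since $(N_p)_{p\in\N}$ satisfies $(\operatorname{lc})$, $(\gamma)$ (a consequence of $(\gamma_1)$) and $(\gamma_1)$ — already gives surjectivity of $\mathcal{B}$ on $\mathcal{D}^{(N_p)}_{[-1,1]}$, hence a fortiori on $\mathcal{S}^{(N_p)}(\R)$, so $\mathcal{M}$ is surjective. This is the only place where $(\gamma_1)$ enters, and there is no circularity with the main theorem, whose difficulty lies in the half-line case.

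Next I would identify $\Lambda'_{(N_p)}$. The finitely supported sequences are dense in $\Lambda_{(N_p)}$: if $a=(a_p)_{p\in\N}\in\Lambda_{(N_p)}$ and $h>0$, then $\sup_p(2h)^p|a_p|/N_p<\infty$ forces $\sup_{p>n}h^p|a_p|/N_p\to0$, so the truncations of $a$ converge to $a$. Therefore every $\xi\in\Lambda'_{(N_p)}$ has the form $\xi(a)=\sum_p c_pa_p$ with $c_p=\xi(e_p)$, where $e_p$ is the $p$-th unit sequence, and continuity of $\xi$ on some $\Lambda_{N_p,h}$ gives $\sup_p|c_p|N_p/h^p<\infty$, i.e.\ $P_c(z):=\sum_p c_pz^p$ is an ultrapolynomial of class $(N_p)$; conversely, every ultrapolynomial of class $(N_p)$ arises in this way from an element of $\Lambda'_{(N_p)}$.

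With these facts the lemma follows. If $f\in(\mathcal{S}_{(N_p),0}(\R))^\perp=(\ker\mathcal{M})^\perp$, then $f$ factors through $\mathcal{M}$ as $f=\xi\circ\mathcal{M}$ for a (unique) linear functional $\xi$ on $\Lambda_{(N_p)}$, and $\xi$ is continuous by the open mapping theorem applied to the surjection $\mathcal{M}$ of Fr\'echet spaces; writing $\xi$ as in the previous paragraph,
\[
\langle f,\varphi\rangle=\sum_p c_p\mu_p(\varphi)=\sum_p c_p\int_\R x^p\varphi(x)\,\dx=\int_\R P_c(x)\varphi(x)\,\dx ,
\]
the interchange of sum and integral being justified since $\sup_p|c_p|N_p/h^p<\infty$ and $(\mu_p(\varphi))_{p\in\N}\in\Lambda_{(N_p)}$ yield $|c_p\mu_p(\varphi)|\le C(h/k)^p$ for a suitable $k>h$. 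Thus $f=P_c$ in $\mathcal{S}'_{(N_p)}(\R)$ with $P_c$ an ultrapolynomial of class $(N_p)$. The converse inclusion is direct: an ultrapolynomial $P$ of class $(N_p)$ satisfies $|P(x)|\le Ce^{N(h|x|)}$ for some $h>0$, while each $\varphi\in\mathcal{S}_{(N_p)}(\R)$ satisfies $|\varphi(x)|\le C_ne^{-N(n|x|)}$ for every $n$, so a standard estimate on the associated function $N$ shows that $\varphi\mapsto\int_\R P\varphi$ is a well-defined element of $\mathcal{S}'_{(N_p)}(\R)$, and the same computation as above shows that it annihilates $\mathcal{S}_{(N_p),0}(\R)$. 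The only genuine work here is the surjectivity step in the first paragraph; everything else is routine manipulation of the weighted sequence spaces $\Lambda_{N_p,h}$ and their duals.
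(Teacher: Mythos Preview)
Your proof is correct and follows essentially the same approach as the paper: both establish surjectivity of $\mathcal{M}\colon\mathcal{S}_{(N_p)}(\R)\to\Lambda_{(N_p)}$ via the inclusion $\mathcal{D}^{(N_p)}_{[-1,1]}\subset\mathcal{S}^{(N_p)}(\R)$, Petzsche's theorem, and the Fourier transform, then identify the annihilator of the kernel with the image of the transpose $\mathcal{M}^t$ acting on $\Lambda'_{(N_p)}$, which is identified with ultrapolynomial coefficient sequences. The only cosmetic difference is that the paper packages the key step as the exactness of the dual of a short exact sequence of Fr\'echet spaces (citing \cite[Prop.\ 26.4]{M-V}), whereas you spell out the factorization $f=\xi\circ\mathcal{M}$ and invoke the open mapping theorem directly to get continuity of $\xi$; these are the same argument.
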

\begin{proof}
Since $\mathcal{D}^{(N_p)}_{[-1,1]} \subseteq \mathcal{S}^{(N_p)}(\R)$, Theorem \ref{Borel-P}$(a)$ implies that 
$\mathcal{B}:  \mathcal{S}^{(N_p)}(\R) \rightarrow \Lambda_{(N_p)}$ is surjective. By taking the Fourier transform, we obtain that  $\mathcal{M}:  \mathcal{S}_{(N_p)}(\R) \rightarrow \Lambda_{(N_p)}$ is surjective. Consequently,  the sequence 
$$
 0 \longrightarrow \mathcal{S}_{(N_p),0}(\R) \xrightarrow{\phantom{,,}\iota\phantom{,,}} \mathcal{S}_{(N_p)}(\R) \xrightarrow{\phantom, \mathcal{M} \phantom,}    \Lambda_{(N_p)} \longrightarrow 0
$$
is exact, where $\iota: \mathcal{S}_{(N_p),0}(\R) \rightarrow \mathcal{S}_{(N_p)}(\R)$ denotes the inclusion mapping. Therefore, its dual sequence
$$
 0 \longrightarrow  \Lambda'_{(N_p)} \xrightarrow{\phantom{,}\mathcal{M}^t\phantom{,}} \mathcal{S}'_{(N_p)}(\R) \xrightarrow{\phantom{,,} \iota^t \phantom{,,}}  \mathcal{S}'_{(N_p),0}(\R)  \longrightarrow 0 
$$
is also exact \cite[Prop.\ 26.4]{M-V}. In particular, $\operatorname{im} \mathcal{M}^t = \ker \iota^t$. It is clear that $\ker \iota^t = (\mathcal{S}_{(N_p),0}(\R))^\perp$. On the other hand, $\Lambda'_{(N_p)}$ may be identified with the space consisting of all sequences $b= (b_p)_{p \in \N} \in \C^\N$ satisfying \eqref{dual-sequence} for some $h> 0$ and, under this identification, the duality is given by 
$$
\langle b, a \rangle = \sum_{p = 0}^\infty b_p a_p, \qquad a= (a_p)_{p \in \N} \in \Lambda_{(N_p)}, b = (b_p)_{p \in \N} \in \Lambda'_{(N_p)}.
$$
Hence, $\operatorname{im} \mathcal{M}^t$ coincides with the subspace of $\mathcal{S}'_{(N_p)}(\R)$ consisting of all ultrapolynomials of class $(N_p)$.
\end{proof}
The next result follows from the  structural theorem for general Gelfand-Shilov spaces  \cite[p.\ 113]{G-S} and \cite[Prop.\ 3.4]{Komatsu}.
\begin{lemma}\label{structural}
Let $(N_p)_{p \in \N}$ be a weight sequence satisfying $(\operatorname{lc})$ and $(\operatorname{dc})$. For every  $B \subset \mathcal{S}'_{(N_p)}(\R)$ equicontinuous there are $\nu \in \N$ and $C,h > 0$ such that for all $f \in B$ there are measurable functions $g_0, \ldots, g_\nu$ such that
$$
f = \sum_{n = 0}^\nu g_n^{(n)}  \mbox{ in $\mathcal{S}'_{(N_p)}(\R)$}
$$
and
$$
|g_n(x)| \leq C e^{N(h|x|)}, \qquad  x \in \R, 
$$
for all $n = 0, \ldots, \nu$.

\end{lemma}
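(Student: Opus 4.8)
The plan is to read off the representation from the structure theorem for a \emph{single} element of $\mathcal{S}'_{(N_p)}(\R)$ (as in \cite[p.\ 113]{G-S}), the whole point being to arrange that the integer $\nu$, the exponent $h$ and the constant $C$ it produces can be chosen uniformly over the equicontinuous family $B$. The mechanism for this uniformity is that an equicontinuous set is controlled by a \emph{single} continuity seminorm, so all subsequent data are manufactured from that seminorm alone. Concretely, I would first record that $\mathcal{S}_{(N_p)}(\R) = \varprojlim_{n}\mathcal{S}^n_{N_p,n}(\R)$ is a Fréchet space (an $(FS)$-space, by \cite[Prop.\ 3.4]{Komatsu}) whose topology is generated by the fundamental sequence of norms $(\|\,\cdot\,\|_{\mathcal{S}^n_{N_p,n}})_n$. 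Hence an equicontinuous $B \subset \mathcal{S}'_{(N_p)}(\R)$ lies in the polar of a basic zero-neighbourhood: there are $\nu \in \N$ and $C_0 > 0$, depending only on $B$, with
\[
|\langle f, \varphi \rangle| \leq C_0 \max_{m \leq \nu}\sup_{x \in \R}|\varphi^{(m)}(x)|\, e^{N(\nu|x|)}, \qquad f \in B,\ \varphi \in \mathcal{S}_{(N_p)}(\R).
\]

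Next I would trade one derivative so as to replace this sup‑norm estimate by an $L^1$‑norm one, which is what permits a representation by honest (countably additive, function‑valued) objects rather than by elements of $(L^\infty)'$. Since every $\varphi \in \mathcal{S}_{(N_p)}(\R)$ and all its derivatives vanish at $\pm\infty$, one has $\varphi^{(m)}(x) = -\int_x^{\infty}\varphi^{(m+1)}(t)\,\dt$ for $x \geq 0$ and symmetrically for $x \leq 0$; as $t \mapsto N(\nu t)$ is non-decreasing on $[0,\infty)$, this yields $\sup_{x}|\varphi^{(m)}(x)|e^{N(\nu|x|)} \leq \int_{\R}|\varphi^{(m+1)}(t)|e^{N(\nu|t|)}\,\dt$ for $m \leq \nu$, hence $|\langle f, \varphi \rangle| \leq C_0 \sum_{m=1}^{\nu+1}\int_{\R}|\varphi^{(m)}(x)|\,e^{N(\nu|x|)}\,\dx$ for all $f \in B$. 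Here $(\operatorname{dc})$ enters in an essential way: it forces $\log N_p = O(p^2)$, hence $N(t) \gtrsim (\log t)^2$ and, more to the point, for every $h>0$ there is $h'>0$ with $\int_{\R}e^{N(h|x|)-N(h'|x|)}\,\dx < \infty$; since elements of $\mathcal{S}_{(N_p)}(\R)$ decay faster than $e^{-N(h'|\cdot|)}$ for every $h'$, all the integrals above are finite. Therefore the injective linear map $\varphi \mapsto (e^{N(\nu|\cdot|)}\varphi^{(m)})_{1\leq m\leq \nu+1}$ sends $\mathcal{S}_{(N_p)}(\R)$ into $(L^1(\R))^{\nu+1}$, and each $f \in B$ defines on its image a functional of norm $\leq C_0$ for the sum norm. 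By the Hahn-Banach theorem it extends to $(L^1(\R))^{\nu+1}$, whose dual is $(L^\infty(\R))^{\nu+1}$ with the max norm; so there are $\theta_1,\ldots,\theta_{\nu+1} \in L^\infty(\R)$ with $\max_m\|\theta_m\|_{L^\infty} \leq C_0$ and
\[
\langle f, \varphi \rangle = \sum_{m=1}^{\nu+1}\int_{\R}\varphi^{(m)}(x)\,e^{N(\nu|x|)}\theta_m(x)\,\dx, \qquad \varphi \in \mathcal{S}_{(N_p)}(\R).
\]
Putting $g_0 := 0$ and $g_m := (-1)^m e^{N(\nu|\cdot|)}\theta_m$ for $1 \leq m \leq \nu+1$ gives measurable functions with $|g_m(x)| \leq C_0\, e^{N(\nu|x|)}$ and $f = \sum_{m=0}^{\nu+1}g_m^{(m)}$ in $\mathcal{S}'_{(N_p)}(\R)$; since $\nu+1$, $C := C_0$ and $h := \nu$ were built from $B$ alone, this is exactly the assertion.

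The only genuine difficulty is the bookkeeping that keeps every constant attached to the single seminorm governing $B$ rather than to the individual $f$ — this is precisely where equicontinuity of $B$ (as opposed to mere pointwise continuity, or boundedness) is used. The remaining ingredients are routine: the Sobolev-type exchange of one derivative, the elementary consequence of $(\operatorname{dc})$ that makes the weighted integrals converge, and the weighted $L^1$–$L^\infty$ duality. Alternatively, one may simply quote the single-functional structure theorem of \cite[p.\ 113]{G-S} together with \cite[Prop.\ 3.4]{Komatsu} and observe that the construction there produces $\nu$, $h$ and $C$ depending only on a continuity estimate for $f$.
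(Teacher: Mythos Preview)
Your argument is correct and is essentially the approach the paper takes: the paper simply cites the structure theorem in \cite[p.\ 113]{G-S} together with \cite[Prop.\ 3.4]{Komatsu}, and your write-up is precisely an unpacking of that structure theorem (equicontinuity $\Rightarrow$ a single controlling seminorm, the one-derivative trade to pass to a weighted $L^1$-estimate, and the Hahn--Banach/$L^1$--$L^\infty$ duality), with the role of \cite[Prop.\ 3.4]{Komatsu} appearing exactly where you invoke $(\operatorname{dc})$ to make $e^{N(\nu|\cdot|)-N(h'|\cdot|)}$ integrable. You even note this equivalence in your closing sentence.
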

Finally, we will also use the following well-known fact from distribution theory.
\begin{lemma}\label{deltas}
Let $\nu \in \N$. For all $N \geq \nu$, $c_0, \ldots, c_N \in \C$ and $g_0, \ldots, g_\nu \in  L^1_{\operatorname{loc}}(\R)$  it holds that 
$$
\sum_{n = 0}^N c_n \delta^{(n)} = \sum_{n = 0}^\nu g^{(n)}_n \mbox{ in $\mathcal{D}'(\R)$}
$$
implies that $c_\nu = \cdots = c_N = 0$.
\end{lemma}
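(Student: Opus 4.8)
The plan is to prove Lemma~\ref{deltas} by a purely local argument near the origin, exploiting the fact that the distributions $\delta^{(n)}$ are supported at $0$ while the $g_n^{(n)}$ have locally integrable antiderivatives of order $n$. The cleanest route is to integrate against test functions concentrated near $0$ and extract the coefficients $c_\nu,\dots,c_N$ one at a time, from the highest order downward. I would proceed by (reverse) induction on $N$, the base case being $N=\nu$, so it suffices in each step to show that $c_N=0$.

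First I would fix the relation $\sum_{n=0}^N c_n\delta^{(n)} = \sum_{n=0}^\nu g_n^{(n)}$ in $\mathcal D'(\R)$ and pair both sides with a suitable family of test functions. Choose $\chi \in \mathcal D(\R)$ with $\chi \equiv 1$ on a neighbourhood of $0$ and $\operatorname{supp}\chi \subseteq [-1,1]$, and for $\varepsilon \in (0,1)$ consider the test function $\psi_\varepsilon(x) = \chi(x/\varepsilon)\, x^N/N!$. Pairing the left-hand side with $\psi_\varepsilon$ gives exactly $c_N$ for every $\varepsilon$, since $\langle \delta^{(n)}, \psi_\varepsilon\rangle = (-1)^n \psi_\varepsilon^{(n)}(0) = (-1)^N\delta_{n,N}$ (the factor $(-1)^N$ is harmless and can be absorbed, or one uses $(-x)^N/N!$). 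For the right-hand side, pairing $g_n^{(n)}$ with $\psi_\varepsilon$ yields $(-1)^n\int_{\R} g_n(x)\, \psi_\varepsilon^{(n)}(x)\,\dx $, and all of these integrals are supported in $[-\varepsilon,\varepsilon]$.

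The key step is then to show that each term $\int_{\R} g_n(x)\,\psi_\varepsilon^{(n)}(x)\,\dx \to 0$ as $\varepsilon \to 0^+$, for $n = 0,\dots,\nu$; since the left side is the constant $(-1)^N c_N$, this forces $c_N = 0$. By the Leibniz rule, $\psi_\varepsilon^{(n)}(x)$ is a sum of terms of the form $\varepsilon^{-j}\chi^{(j)}(x/\varepsilon)\cdot \frac{x^{N-n+j}}{(N-n+j)!}\cdot(\text{constant})$ with $0 \le j \le n \le \nu \le N$, hence $|\psi_\varepsilon^{(n)}(x)| \le C \varepsilon^{-n}|x|^{N-n}$ on $|x|\le\varepsilon$, and since $N-n \ge N-\nu \ge 0$ we get $|\psi_\varepsilon^{(n)}(x)| \le C\varepsilon^{N-2n} \le C\varepsilon^{N-2\nu}$ on $[-\varepsilon,\varepsilon]$ — wait, this bound is not uniformly small when $N < 2\nu$, so I would instead keep the sharper estimate $|\psi_\varepsilon^{(n)}(x)|\le C|x|^{N-n}\varepsilon^{-(n - (N-n)_+ \wedge \dots)}$; more simply, on $[-\varepsilon,\varepsilon]$ one has $|x|^{N-n+j}\varepsilon^{-j} \le \varepsilon^{N-n}$ because $|x|\le\varepsilon$ and $N-n\ge0$, so in fact $|\psi_\varepsilon^{(n)}(x)|\le C\varepsilon^{N-n}$ uniformly. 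Then $\bigl|\int g_n\psi_\varepsilon^{(n)}\bigr| \le C\varepsilon^{N-n}\int_{-\varepsilon}^{\varepsilon}|g_n(x)|\,\dx $, and since $g_n \in L^1_{\loc}(\R)$ the integral $\int_{-\varepsilon}^\varepsilon |g_n|\to 0$, while $\varepsilon^{N-n}$ stays bounded; hence the whole term tends to $0$. Summing over $n=0,\dots,\nu$ gives $(-1)^N c_N = 0$.

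The main obstacle — really the only subtlety — is getting the exponents right in the Leibniz expansion of $\psi_\varepsilon^{(n)}$ and checking that they never produce a negative power of $\varepsilon$ that survives on the shrinking support; this is exactly where the hypothesis $N\ge\nu$ (equivalently $n\le\nu\le N$, so $N-n\ge0$) is used, and it is what makes the choice $\psi_\varepsilon(x)=\chi(x/\varepsilon)x^N/N!$ work. Once $c_N=0$ is established, the remaining relation $\sum_{n=0}^{N-1}c_n\delta^{(n)} = \sum_{n=0}^\nu g_n^{(n)}$ has the same form with $N$ replaced by $N-1\ge\nu$, and the induction closes, yielding $c_\nu=\cdots=c_N=0$.
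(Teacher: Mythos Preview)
Your argument is correct. The paper does not actually prove this lemma; it merely records it as ``a well-known fact from distribution theory'' and moves on, so there is nothing to compare against. Your choice of test function $\psi_\varepsilon(x)=\chi(x/\varepsilon)\,x^N/N!$ and the Leibniz estimate $|\psi_\varepsilon^{(n)}(x)|\le C\varepsilon^{N-n}$ on $[-\varepsilon,\varepsilon]$ (valid precisely because $n\le\nu\le N$ forces every exponent $N-n+j$ to be nonnegative) give exactly what is needed, and the reverse induction closes cleanly. One cosmetic remark: the momentary digression where you try $\varepsilon^{N-2n}$ before correcting yourself is unnecessary in a final write-up---you can go straight to $|x|^{N-n+j}\varepsilon^{-j}\le\varepsilon^{N-n}$ on $|x|\le\varepsilon$.
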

\begin{proof}[Proof of Proposition \ref{Eidelheit-Borel}] We use Proposition \ref{Eidelheit-consequence} with  $E = \mathcal{S}_{(N_p)}(\R)$, $F = \mathcal{S}_{(N_p),0}(\R)$ and $(x'_n)_{n \in \N} = ((-1)^n\delta^{(n)})_{n \in \N}$. In view of  Lemma \ref{description-kernel}, condition $(1)$ follows from Lemma \ref{deltas} (with $\nu = 0$), while condition $(2)$ follows from Lemmas \ref{structural} and \ref{deltas}.  
\end{proof}

\section{The Stieltjes moment problem in $\mathcal{S}_\ast(0,\infty)$}\label{sect-main}
We are ready to characterize the surjectivity and the existence of a continuous linear right inverse of $\mathcal{M} : \mathcal{S}_{\ast}(0,\infty)  \rightarrow \Lambda_{\ast}$ (cf.\ Theorem \ref{main-theorem-intro}).

\begin{theorem}\label{main-theorem} Let $(M_p)_{p \in \N}$ be a weight sequence satisfying $(\operatorname{lc})$ and $(\operatorname{dc})$.
\begin{itemize}
\item[$(a)$] The following statements are equivalent:
\begin{itemize}
\item[$(i)$] $(M_p)_{p \in \N}$ satisfies $(\gamma_2)$.
\item[$(ii)$] $\mathcal{M}: \mathcal{S}_{(M_p)}(0,\infty) \rightarrow \Lambda_{(M_p)}$ has a continuous linear right inverse.
\item[$(iii)$] $\mathcal{M}: \mathcal{S}_{(M_p)}(0,\infty) \rightarrow \Lambda_{(M_p)}$ is surjective.
\end{itemize}
\item[$(b)$] The following statements are equivalent:
\begin{itemize}
\item[$(i)$] $(M_p)_{p \in \N}$ satisfies $(\gamma_2)$.
\item[$(ii)$] $\mathcal{M}: \Si_{\{M_p\}}(0,\infty) \rightarrow \Lambda_{\{M_p\}}$ is surjective.
\item[$(iii)$] $\mathcal{M}: \Sp_{\{M_p\}}(0,\infty) \rightarrow \Lambda_{\{M_p\}}$ is surjective.
\end{itemize}
\item[$(c)$] The following statements are equivalent:
\begin{itemize}
\item[$(i)$] $(M_p)_{p \in \N}$ satisfies $(\gamma_2)$ and $(\beta_2)$.
\item[$(ii)$] $\mathcal{M}: \Si_{\{M_p\}}(0,\infty) \rightarrow \Lambda_{\{M_p\}}$ has a continuous linear right inverse.
\item[$(iii)$] $\mathcal{M}: \Sp_{\{M_p\}}(0,\infty) \rightarrow \Lambda_{\{M_p\}}$ has a continuous linear right inverse.
\end{itemize}
\end{itemize}
\end{theorem}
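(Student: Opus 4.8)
\textbf{Proof strategy for Theorem \ref{main-theorem}.}
The plan is to reduce the Stieltjes moment problem on $\mathcal S_\ast(0,\infty)$ to the Borel problem on $\mathcal D^{\ast}_{[-1,1]}$ (equivalently, on the relevant Gelfand--Shilov spaces) for the $2$-interpolating sequence $(N_p)$, and then invoke Petzsche's Theorem \ref{Borel-P} together with Lemma \ref{2-interpolating}. The backbone is the change of variables $x\mapsto x^{1/2}$ and $x\mapsto x^2$ encoded in Lemma \ref{reduction-2-1}, combined with the multiplication/division operators of Lemma \ref{reduction-2-0}; these build a two-sided chain of continuous maps linking $\mathcal S_\ast(0,\infty)$ with $(M_p)$-weights to $\mathcal S_\ast(0,\infty)$ with $(N_p)$-weights, under which the moment mapping $\mathcal M$ essentially transforms into the Borel mapping $\mathcal B$ via the Fourier transform. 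Since $(M_p)$ satisfies $(\gamma_2)$ iff $(N_p)$ satisfies $(\gamma_1)$ (Lemma \ref{2-interpolating}$(b)$), and $(\beta_2)$ passes between the two sequences (Lemma \ref{2-interpolating}$(c)$), the conditions in the statement are exactly the ones that Petzsche's theorem produces for $(N_p)$.

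\textbf{Necessity of $(\gamma_2)$ (and of $(\beta_2)$ in part $(c)$).}
First I would show that surjectivity of $\mathcal M$ on $\mathcal S_\ast(0,\infty)$ forces $(\gamma_2)$. The idea: given $a\in\Lambda_\ast$, a preimage $\varphi\in\mathcal S_\ast(0,\infty)$ with moments $a$ can be pushed, via the $x\mapsto x^{1/2}$ substitution of Lemma \ref{reduction-2-1}$(a)$ and then division by $x$ (Lemma \ref{reduction-2-0}$(a)$), into a function in $\mathcal S_\ast(0,\infty)$ for the weight $(N_p)$ whose moments recover (up to harmless constants) the original sequence; taking Fourier transforms turns this into surjectivity of $\mathcal B$ on a subspace of $\mathcal S^{(N_p)}(\R)$, hence on $\mathcal D^{(N_p)}_{[-1,1]}$ after a cutoff, so Theorem \ref{Borel-P}$(a)$ or $(b)$ gives $(\gamma_1)$ for $(N_p)$, i.e. $(\gamma_2)$ for $(M_p)$ by Lemma \ref{2-interpolating}$(b)$. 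For part $(c)$, the presence of a continuous linear right inverse of $\mathcal M$ transports, along the same chain of continuous maps, to a continuous linear right inverse of $\mathcal B$ in the Roumieu setting, whence $(\gamma_1)$ and $(\beta_2)$ for $(N_p)$ by Theorem \ref{Borel-P}$(c)$, i.e. $(\gamma_2)$ and $(\beta_2)$ for $(M_p)$ by Lemma \ref{2-interpolating}$(b)$--$(c)$. Throughout I would use that, by Lemma \ref{ind-proj-description}, the relevant subspaces sit topologically inside $\mathcal S_\ast(\R)$, and that the Fourier transform is a topological isomorphism $\mathcal S_\ast(\R)\to\mathcal S^\ast(\R)$ once $(\operatorname{lc})$ and $(\operatorname{dc})$ hold.

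\textbf{Sufficiency.}
Conversely, assume $(\gamma_2)$ (and $(\beta_2)$ for $(c)$). For the Beurling case $(a)$, I would combine: (i) Petzsche's Theorem \ref{Borel-P}$(a)$ giving a continuous linear right inverse of $\mathcal B$ on $\mathcal D^{(N_p)}_{[-1,1]}$; (ii) the Fourier transform, producing a right inverse of $\mathcal M$ into $\mathcal S^{(N_p)}(\R)$; (iii) the functional-analytic machinery of Section \ref{sect-FA}--\ref{sub-Eidelheit}, namely Proposition \ref{functional-analysis} with $E$ a suitable $\mathcal S_{(N_p)}(\R)$-type space, $F=\Lambda_{(N_p)}$, $G=\C^\N$, $S$ the map $\varphi\mapsto$ (moments killed on $(0,\infty)$-support obstruction), using Proposition \ref{Eidelheit-Borel-1} to verify the bounded-lifting hypothesis $(3)$ and Remark \ref{Schwartz} since $\mathcal S_{(N_p)}(\R)$ is $(FS)$; this yields a continuous linear right inverse of $\mathcal M$ on $\mathcal S_{(N_p)}(0,\infty)$; (iv) finally the $x\mapsto x^2$ substitution and multiplication by $x$ (Lemmas \ref{reduction-2-1}$(b)$, \ref{reduction-2-0}$(b)$) to transport this back to a continuous linear right inverse of $\mathcal M$ on $\mathcal S_{(M_p)}(0,\infty)$, which a fortiori is surjective. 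This proves $(i)\Rightarrow(ii)\Rightarrow(iii)\Rightarrow(i)$ in $(a)$. For the Roumieu cases $(b)$ and $(c)$, one runs the analogous chain using Theorem \ref{Borel-P}$(b)$ respectively $(c)$; here the subtlety is that $\Si_{\{M_p\}}(0,\infty)$ and $\Sp_{\{M_p\}}(0,\infty)$ have different (inductive-projective vs. projective-inductive) topologies, so the substitution lemmas were deliberately split: Lemma \ref{reduction-2-0}$(a)$ and \ref{reduction-2-1}$(a)$ handle $\Si$, while the $(b)$-parts handle $\Sp$. Surjectivity being a weaker property than possessing a right inverse, $(b)$ follows once the $\Si$-direction gives surjectivity from $(\gamma_2)$ and the $\Sp$-direction (together with the forgetful map $\Sp\hookrightarrow\Si$ or a direct argument) closes the loop; and $(c)$ follows by the same transport of continuous linear right inverses. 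Note that completeness of $\Si_{\{M_p\}}(\R)$ (Proposition \ref{complete}) is available where sequential completeness of the ambient space is needed in Lemma \ref{right-inverse-absolute-Schauder-frame}.

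\textbf{Main obstacle.}
The hard part is step (iii) of the sufficiency in the Beurling case: one cannot directly pull back the Borel right inverse to functions supported in $[0,\infty)$, because the Fourier preimage of a flat-at-$0$ jet need not have one-sided support. This is precisely what Proposition \ref{functional-analysis} is designed to circumvent --- it manufactures a right inverse landing in $\ker S$ (the moment-flat, i.e. support-controlled, subspace) out of a right inverse landing merely in $E$ plus a bounded-lifting property of $S$ restricted to $\ker T$; verifying hypotheses $(4.1)$--$(4.3)$ for the concrete $E_0$ (a space of the type $\mathcal S^{n}_{N_p,h}$ with fixed finite order of smoothness, on which the division/substitution operators act boundedly by Lemmas \ref{reduction-2-0}--\ref{reduction-2-1}) is the technical crux, and Proposition \ref{Eidelheit-Borel-1} is exactly the input $(3)$. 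In the Roumieu setting the additional obstacle is bookkeeping the two distinct topologies on the $(0,\infty)$-spaces and ensuring the abstract lemmas (which want sequential completeness or an absolute Schauder frame) still apply; this is where Proposition \ref{complete} and the $(PLB)$ versus $(LF)$ distinction must be handled with care.
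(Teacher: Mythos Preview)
Your overall architecture is right: reduce to the $2$-interpolating sequence $(N_p)$, invoke Petzsche via Theorem \ref{Borel-P}, and use Proposition \ref{functional-analysis} with input Proposition \ref{Eidelheit-Borel-1} to manufacture the right inverse. But several of the concrete identifications are off, and one of them is a genuine gap.

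\textbf{The application of Proposition \ref{functional-analysis}.} In the paper's proof the setup is $E=\mathcal S_{(N_p)}(\R)$, $T=\mathcal M$, $S=\mathcal B$ (the Borel map $\varphi\mapsto(\varphi^{(p)}(0))_p$), so that $\ker S=\mathcal S^0_{(N_p)}(\R)$ is the space of functions \emph{flat at $0$} on the whole line, and $\ker T=\mathcal S_{(N_p),0}(\R)$ is the space with vanishing moments. Proposition \ref{Eidelheit-Borel-1} is exactly the statement that $\mathcal B$ restricted to $\ker\mathcal M$ lifts bounded sets, which is condition $(3)$. Your description of $S$ as a ``support obstruction'' map and of $\ker S$ as $\mathcal S_{(N_p)}(0,\infty)$ does not fit: there is no continuous linear $S$ on $\mathcal S_{(N_p)}(\R)$ whose kernel is the one-sided space, and trying to force one breaks the hypotheses of Proposition \ref{functional-analysis}. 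The output of the abstract machinery is therefore a right inverse of $\mathcal M$ on $\mathcal S^0_{(N_p)}(\R)$, not on $\mathcal S_{(N_p)}(0,\infty)$.

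\textbf{The choice of $E_0$.} The paper takes $E_0=\mathcal F^{-1}\bigl(\mathcal D^{(N_p)}_{[-1,1]}\bigr)$ (resp.\ $\mathcal F^{-1}\bigl(\mathcal D^{\{N_p\}}_{[-1,1]}\bigr)$ in the Roumieu case), not a finite-order space $\mathcal S^n_{N_p,h}$. The point is that on $E_0$ the Fourier transform has compact support, so $|\varphi^{(p)}(0)|\le\pi^{-1}\|\widehat\varphi\|_{L^\infty([-1,1])}$ uniformly in $p$, which is exactly what gives local boundedness $(4.3)$ of $S=\mathcal B$; and $(4.2)$ is then literally Petzsche's right inverse composed with $\mathcal F^{-1}$. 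A finite-order space does not provide $(4.2)$.

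\textbf{The missing even/odd step.} Since the abstract lemma lands in $\mathcal S^0_{(N_p)}(\R)$, you still need Proposition \ref{reduction-1} to pass to $\mathcal S_{(M_p)}(0,\infty)$. This is done in two moves: first an even/odd symmetrisation relates $\mathcal S^0_{(N_p)}(\R)$ to $\mathcal S_{(N_p)}(0,\infty)$ through the split moment maps $\mathcal M_e$, $\mathcal M_o$; then the substitutions of Lemmas \ref{reduction-2-0}--\ref{reduction-2-1} connect $\mathcal S_{(N_p)}(0,\infty)$ with $\mathcal S_{(M_p)}(0,\infty)$. Your outline skips the symmetrisation entirely.

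\textbf{Minor corrections.} The continuous inclusion goes $\Si_{\{N_p\}}(\R)\subset\Sp_{\{N_p\}}(\R)$, not the other way; this is what gives $(ii)\Rightarrow(iii)$ in parts $(b)$ and $(c)$. Also, in the necessity direction one goes from $\mathcal S_{(M_p)}(0,\infty)$ to $(N_p)$-spaces via $\varphi\mapsto\varphi(x^2)$ (Lemma \ref{reduction-2-1}$(b)$), not $x\mapsto x^{1/2}$.
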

In view of Lemma \ref{2-interpolating}, Theorem \ref{main-theorem} is a consequence of the following two results.

\begin{proposition}\label{reduction-1} Let $(M_p)_{p \in \N}$ be a weight sequence satisfying $(\operatorname{lc})$ and $(\operatorname{dc})$. Denote by $(N_p)_{p \in \N}$ its $2$-interpolating sequence.
\begin{itemize}
\item[$(a)$] $\mathcal{M}: \mathcal{S}_{(M_p)}(0,\infty) \rightarrow \Lambda_{(M_p)}$ is surjective (has a continuous linear right inverse) if and only if 
  $\mathcal{M}: \mathcal{S}_{(N_p)}^0(\R) \rightarrow \Lambda_{(N_p)}$  is surjective (has a continuous linear right inverse).
\item[$(b)$] If $\mathcal{M}: \Si_{\{N_p\}}^0(\R) \rightarrow \Lambda_{\{N_p\}}$ is surjective (has a continuous linear right inverse),
$\mathcal{M}: \Si_{\{M_p\}}(0,\infty) \rightarrow \Lambda_{\{M_p\}}$  is surjective (has a continuous linear right inverse) as well.
\item[$(c)$] If $\mathcal{M}: \Sp_{\{M_p\}}(0,\infty) \rightarrow \Lambda_{\{M_p\}}$  is surjective (has a continuous linear right inverse), 
$\mathcal{M}: \Sp_{\{N_p\}}^0(\R) \rightarrow \Lambda_{\{N_p\}}$  is surjective (has a continuous linear right inverse) as well.
\end{itemize}
\end{proposition}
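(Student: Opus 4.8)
The strategy is to transfer the moment problem on $(0,\infty)$ with weight $(M_p)$ to a ``flat at the origin'' problem with the $2$-interpolating weight $(N_p)$ by means of the substitution maps of Lemma \ref{reduction-2-1}. The key algebraic identity is that if $\psi(x) = \varphi(x^{1/2})$ for $x > 0$ (extended by $0$), then
$$
\mu_p(\psi) = \int_0^\infty x^p \varphi(x^{1/2})\,\dx = 2\int_0^\infty y^{2p+1}\varphi(y)\,\dy,
$$
so moments of $\psi$ correspond to odd moments of $\varphi$; conversely $\chi(x) = \varphi(x^2)$ for $x>0$ gives $\mu_p(\chi) = \tfrac12\int_0^\infty y^{(p-1)/2}\varphi(y)\,\dy$ for $p$ odd and $= \tfrac12\int_0^\infty y^{p/2}\varphi(y)\,\dy$... wait, more precisely $\mu_p(\chi) = \int_0^\infty y^p\varphi(y^2)\dy = \tfrac12\int_0^\infty u^{(p-1)/2}\varphi(u)\du$. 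The point to exploit is: moments of a function on $(0,\infty)$ with weight $(M_p)$ should be matched, via $x\mapsto x^{1/2}$, with moments of a function in $\mathcal{S}_{(N_p)}^0(\R)$, using that $N_{2p}=M_p$, $N_{2p+1}=(M_pM_{p+1})^{1/2}$, and that vanishing of all moments of a substituted function forces flatness at $0$.

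First I would set up, in each of the cases $(M_p)$-Beurling and $(M_p)$-Roumieu, the commuting diagram relating $\mathcal{M}$ on $\mathcal{S}_\ast(0,\infty)$ to $\mathcal{M}$ on $\mathcal{S}^0_\ast(\R)$ for the interpolating sequence. For part $(a)$: given a right inverse (or a solution operator) $R$ for $\mathcal{M}\colon \mathcal{S}^0_{(N_p)}(\R)\to\Lambda_{(N_p)}$, one builds a right inverse for $\mathcal{M}\colon\mathcal{S}_{(M_p)}(0,\infty)\to\Lambda_{(M_p)}$ as follows: given a target sequence $a=(a_p)\in\Lambda_{(M_p)}$, form an auxiliary sequence $b\in\Lambda_{(N_p)}$ supported on the odd indices (set $b_{2p}=0$, $b_{2p+1}=$ a suitable multiple of $a_p$; the rescaling constant is harmless because $N_{2p+1}$ and $M_p$ are comparable up to $(\operatorname{dc})$-type constants), apply $R$ to get $\varphi\in\mathcal{S}^0_{(N_p)}(\R)$ with prescribed moments, restrict to $(0,\infty)$, and compose with the substitution $T(\varphi)(x)=\varphi(x^{1/2})$ from Lemma \ref{reduction-2-1}$(a)$; since only the odd moments of $\varphi$ survive, $T(\varphi)$ has exactly the moments $\propto a_p$. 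The converse direction of $(a)$ uses $T(\varphi)(x)=\varphi(x^2)$ from Lemma \ref{reduction-2-1}$(b)$: given a solution on $(0,\infty)$ for the half of a target sequence indexed appropriately, pushing forward by $x\mapsto x^2$ produces a function which is flat at the origin (because $\varphi$ had support in $[0,\infty)$ and the even/odd moment bookkeeping kills the complementary moments) — here one must also invoke Lemma \ref{reduction-2-0} to divide or multiply by $x$ so as to realize moments at \emph{both} parities, since $x\mapsto x^2$ alone only reaches half the moments. Parts $(b)$ and $(c)$ are the one-sided analogues: for the $(LF)$ spaces $\Si_{\{M_p\}}$ the substitution $x\mapsto x^{1/2}$ maps $\mathcal{S}^0_{\{N_p\}}$-type spaces into $\mathcal{S}_{\{M_p\}}(0,\infty)$ (Lemma \ref{reduction-2-1}$(a)$), giving the stated implication, and for the $(PLB)$ spaces $\Sp_{\{M_p\}}$ the substitution $x\mapsto x^2$ goes the other way (Lemma \ref{reduction-2-1}$(b)$), giving the implication in $(c)$; the asymmetry in the statement (equivalence only for the Fréchet case, one-directional for the others) is exactly the asymmetry of which substitution is continuous on which Roumieu-type space.

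The continuity of the constructed right inverses is where the bookkeeping must be done carefully: a right inverse for $\mathcal{M}$ is a continuous linear $R\colon\Lambda_\ast\to\mathcal{S}_\ast$ with $\mathcal{M}\circ R=\operatorname{id}$, and one needs the composition $T\circ R\circ(\text{reindexing})$ to be continuous. The reindexing map $a\mapsto b$ between sequence spaces is continuous precisely because $N_{2p+1}=(M_pM_{p+1})^{1/2}$ and $(\operatorname{dc})$ gives $M_{p+1}\le C_0H^{p+1}M_p$, so $N_{2p+1}$ and $M_p$ differ by at most a geometric factor, which is absorbed by the freedom in $h$ in both the Beurling ($\forall h$) and Roumieu ($\exists h$) settings; $T$ is continuous by Lemma \ref{reduction-2-1}; and $R$ is continuous by hypothesis. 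For the mere-surjectivity statements one drops continuity of $R$ and runs the same compositions at the level of linear maps. The main obstacle I anticipate is the careful verification that $\operatorname{supp}\subseteq[0,\infty)$ plus ``all moments vanish on the complementary parity class'' really does force membership in $\mathcal{S}^0_\ast(\R)$ (flatness at $0$) and, in the reverse direction, that one can actually prescribe moments at \emph{both} parities after the $x\mapsto x^2$ substitution — this is precisely why Lemma \ref{reduction-2-0} (multiplication/division by $x$) is needed alongside Lemma \ref{reduction-2-1}, and assembling these two substitutions into a single continuous operator without losing the moment-matching property is the delicate point.
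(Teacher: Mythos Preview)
Your overall strategy is right: the substitution maps of Lemmas \ref{reduction-2-0} and \ref{reduction-2-1} are exactly the engine of the proof, and the asymmetry you note in parts $(b)$ and $(c)$ is the correct explanation for the one-sided statements. However, there is a concrete gap and one misconception.

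\textbf{The gap.} In your construction of a right inverse for $\mathcal{M}:\mathcal{S}_{(M_p)}(0,\infty)\to\Lambda_{(M_p)}$ from one for $\mathcal{M}:\mathcal{S}^0_{(N_p)}(\R)\to\Lambda_{(N_p)}$, you write: ``apply $R$ to get $\varphi\in\mathcal{S}^0_{(N_p)}(\R)$ \ldots, restrict to $(0,\infty)$, and compose with the substitution $T(\varphi)(x)=\varphi(x^{1/2})$.'' Plain restriction to $(0,\infty)$ destroys the moments: $\int_0^\infty x^p\varphi(x)\,\dx$ bears no controlled relation to $\mu_p(\varphi)=\int_{-\infty}^\infty x^p\varphi(x)\,\dx$. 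The paper replaces restriction by the \emph{fold} map
\[
S(\varphi)(x)=\begin{cases}\varphi(x)+\varphi(-x),&x>0,\\0,&x\le 0,\end{cases}
\]
which lands in $\mathcal{S}_{(N_p)}(0,\infty)$ and satisfies $\int_0^\infty x^{2p}S(\varphi)(x)\,\dx=\mu_{2p}(\varphi)$ exactly. One then embeds $a\in\Lambda_{(M_p)}$ at the \emph{even} indices of $\Lambda_{(N_p)}$ (not odd), applies $R$, folds, and finally uses $T(\varphi)(x)=\varphi(x^{1/2})/(2x^{1/2})$, for which $\mu_p(T\varphi)=\mu_{2p}(\varphi)$. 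In the opposite direction (from $\mathcal{S}_{(M_p)}(0,\infty)$ to $\mathcal{S}^0_{(N_p)}(\R)$) the companion device is the even/odd symmetrization
\[
S_e(\varphi)=\tfrac{1}{2}\bigl(\varphi+\varphi(-\,\cdot\,)\bigr),\qquad S_o(\varphi)=\tfrac{1}{2}\bigl(\varphi-\varphi(-\,\cdot\,)\bigr),
\]
applied after one has produced, via $T_e(\varphi)(x)=2x\varphi(x^2)$ and $T_o(\varphi)(x)=2\varphi(x^2)$, right inverses for the even-moment and odd-moment maps $\mathcal{S}_{(N_p)}(0,\infty)\to\Lambda_{(M_p)}$ separately. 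These fold/symmetrize maps are the missing ingredient in your sketch; without them the moment bookkeeping does not close.

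\textbf{The misconception.} Your ``main obstacle'' --- that $\operatorname{supp}\varphi\subseteq[0,\infty)$ together with vanishing of the complementary-parity moments should force flatness at $0$ --- is not needed and is not how the paper proceeds. A smooth function with support in $[0,\infty)$ is \emph{automatically} flat at $0$ (all one-sided derivatives vanish because $\varphi\equiv 0$ on $(-\infty,0]$), so $\mathcal{S}_{(N_p)}(0,\infty)\subset\mathcal{S}^0_{(N_p)}(\R)$ trivially; moment conditions play no role here. The passage to $\mathcal{S}^0_{(N_p)}(\R)$ is handled by the symmetrizations $S_e,S_o$ above, which preserve flatness at $0$ and kill the unwanted parity of moments by construction.
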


\begin{proposition}\label{moment-flat}
Let $(N_p)_{p \in \N}$ be a weight sequence satisfying $(\operatorname{lc})$ and $(\operatorname{dc})$. 
\begin{itemize}
\item[$(a)$] The following statements are equivalent:
\begin{itemize}
\item[$(i)$] $(N_p)_{p \in \N}$ satisfies $(\gamma_1)$.
\item[$(ii)$] 
$\mathcal{M}: \mathcal{S}_{(N_p)}^0(\R) \rightarrow \Lambda_{(N_p)} $
has a continuous linear right inverse.
\item[$(iii)$] 
$\mathcal{M}: \mathcal{S}_{(N_p)}^0(\R) \rightarrow \Lambda_{(N_p)} $
is surjective.
\end{itemize}
\item[$(b)$] The following statements are equivalent:
\begin{itemize}
\item[$(i)$] $(N_p)_{p \in \N}$ satisfies $(\gamma_1)$.
\item[$(ii)$]
$\mathcal{M}: \Si_{\{N_p\}}^0(\R) \rightarrow \Lambda_{\{N_p\}} $
is surjective.
\item[$(iii)$] 
$\mathcal{M}: \Sp_{\{N_p\}}^0(\R) \rightarrow \Lambda_{\{N_p\}} $
is surjective.
\end{itemize}
\item[$(c)$] The following statements are equivalent:
\begin{itemize}
\item[$(i)$] $(N_p)_{p \in \N}$ satisfies $(\gamma_1)$ and $(\beta_2)$.
\item[$(ii)$] 
$\mathcal{M}: \Si_{\{N_p\}}^0(\R) \rightarrow \Lambda_{\{N_p\}} $
has a continuous linear right inverse.
\item[$(iii)$]   
$\mathcal{M}: \Sp^0_{\{N_p\}}(\R) \rightarrow \Lambda_{\{N_p\}} $
has a continuous linear right inverse.
\end{itemize}
\end{itemize}
\end{proposition}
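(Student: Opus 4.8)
The plan is to transfer the whole statement, via the Fourier transform, to the Borel problem, where one can quote Petzsche's Theorem~\ref{Borel-P} together with the Eidelheit-type results of Section~\ref{sub-Eidelheit}. The starting point is the identity $\widehat{\varphi}^{(p)}(0)=i^{p}\mu_{p}(\varphi)$: since $\mathcal{F}:\mathcal{S}_{\ast}(\R)\to\mathcal{S}^{\ast}(\R)$ is a topological isomorphism (this is where $(\operatorname{lc})$ and $(\operatorname{dc})$ enter), the mappings $\mathcal{M}:\mathcal{S}_{\ast}(\R)\to\Lambda_{\ast}$ and $\mathcal{B}:\mathcal{S}^{\ast}(\R)\to\Lambda_{\ast}$ are conjugate up to the diagonal topological isomorphism $(a_{p})_{p}\mapsto((-i)^{p}a_{p})_{p}$ of $\Lambda_{\ast}$, and $\mathcal{F}$ interchanges the condition $\varphi^{(n)}(0)=0$ $(n\in\N)$ defining $\mathcal{S}^{0}_{\ast}(\R)$ with the condition $\mu_{n}(\varphi)=0$ $(n\in\N)$. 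Thus the proposition is really a statement about the Borel mapping restricted to suitable subspaces, and I would prove each part by establishing the cycle $(i)\Rightarrow(ii)\Rightarrow(iii)\Rightarrow(i)$.

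For the direction ``condition $\Rightarrow$ surjectivity / continuous linear right inverse'' I would use the abstract tools of Section~\ref{sect-FA}. In part~$(a)$, for $(i)\Rightarrow(ii)$, apply Proposition~\ref{functional-analysis} with $E=\mathcal{S}_{(N_p)}(\R)$ (an $(FS)$-space, hence sequentially complete, by \cite[Prop.~3.4]{Komatsu}), $F=\Lambda_{(N_p)}$ (whose canonical unit-vector basis is an absolute Schauder basis, hence yields an absolute Schauder frame), $G=\C^{\N}$, $T=\mathcal{M}$ and $S=\mathcal{B}$, so that $\ker S=\mathcal{S}^{0}_{(N_p)}(\R)$ and $\ker T=\mathcal{S}_{(N_p),0}(\R)$. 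Condition~$(3)$ is exactly Proposition~\ref{Eidelheit-Borel-1} (or, by Remark~\ref{Schwartz}, Proposition~\ref{Eidelheit-Borel}). For condition~$(4)$ I would take $E_{0}=\mathcal{F}(\mathcal{D}^{(N_p)}_{[-1,1]})$ with the topology transported from $\mathcal{D}^{(N_p)}_{[-1,1]}$: then $(4.1)$ holds because $\mathcal{D}^{(N_p)}_{[-1,1]}\hookrightarrow\mathcal{S}^{(N_p)}(\R)$ continuously (the factor $x^{m}$ is harmless on $[-1,1]$) and $\mathcal{F}$ is an isomorphism; $(4.2)$ holds because $\mathcal{M}|_{E_{0}}$ is conjugate to $\mathcal{B}:\mathcal{D}^{(N_p)}_{[-1,1]}\to\Lambda_{(N_p)}$, which has a continuous linear right inverse by Theorem~\ref{Borel-P}$(a)$ under $(\gamma_1)$; and $(4.3)$ holds because $|\mu_{p}(\psi)|\le 2\|\psi\|_{L^{\infty}([-1,1])}\le 2\|\psi\|_{\mathcal{D}^{N_p,h}_{[-1,1]}}$, so $\mathcal{B}|_{E_{0}}$ is a continuous map into $\ell^{\infty}$ and hence maps a $0$-neighbourhood into a bounded subset of $\C^{\N}$. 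The implication $(ii)\Rightarrow(iii)$ is trivial. For parts~$(b)$ and~$(c)$ the same scheme works with $E=\Si_{\{N_p\}}(\R)$ (complete by Proposition~\ref{complete}), $E_{0}=\mathcal{F}(\mathcal{D}^{\{N_p\}}_{[-1,1]})$ and $F=\Lambda_{\{N_p\}}$: in $(4.2)$ I would invoke Theorem~\ref{Borel-P}$(c)$ when $(\gamma_1)$ and $(\beta_2)$ hold [part~$(c)$], whereas for mere surjectivity [part~$(b)$] I would replace Proposition~\ref{functional-analysis} by Lemma~\ref{surjectivity-kernel} together with Theorem~\ref{Borel-P}$(b)$; and condition~$(3)$ (resp.\ the hypothesis of Lemma~\ref{surjectivity-kernel} on $S|_{\ker T}$) would follow from Proposition~\ref{Eidelheit-Borel-1} (resp.\ Proposition~\ref{Eidelheit-Borel}) applied to $\mathcal{S}_{(N_p),0}(\R)\subseteq\ker(\mathcal{M}|_{\Si_{\{N_p\}}(\R)})$, since lifting of bounded sets (resp.\ surjectivity) is inherited along this continuous inclusion. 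Statement~$(iii)$ in parts~$(b)$ and~$(c)$ would then be obtained from $(ii)$ by composing with the continuous inclusion $\Si^{0}_{\{N_p\}}(\R)\hookrightarrow\Sp^{0}_{\{N_p\}}(\R)$, which is compatible with the moment mappings.

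For the converse implications I would pass to the larger domain $\mathcal{S}_{\ast}(\R)$ and apply the Fourier isomorphism, which turns surjectivity (resp.\ existence of a continuous linear right inverse) of $\mathcal{M}$ on $\mathcal{S}^{0}_{\ast}(\R)$ (resp.\ on $\Sp^{0}_{\{N_p\}}(\R)$) into the same property of $\mathcal{B}:\mathcal{S}^{\ast}(\R)\to\Lambda_{\ast}$. If $(N_p)$ satisfies $(\gamma)$, one chooses $\theta\in\mathcal{D}^{(N_p)}_{[-1,1]}$ (resp.\ $\theta\in\mathcal{D}^{\{N_p\}}_{[-1,1]}$) with $\theta\equiv 1$ near $0$; multiplication by $\theta$ is a continuous linear map $\mathcal{S}^{\ast}(\R)\to\mathcal{D}^{\ast}_{[-1,1]}$ (for which $(\operatorname{lc})$ suffices) that does not change the jet at $0$, so $\mathcal{B}$ is already surjective (resp.\ admits a continuous linear right inverse) on $\mathcal{D}^{(N_p)}_{[-1,1]}$ (resp.\ $\mathcal{D}^{\{N_p\}}_{[-1,1]}$), and Theorem~\ref{Borel-P} yields $(\gamma_1)$ in parts~$(a),(b)$ and $(\gamma_1),(\beta_2)$ in part~$(c)$. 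If $(N_p)$ does not satisfy $(\gamma)$ then $(\gamma_1)$ fails as well, and it only remains to observe that $\mathcal{M}$ is then not surjective; this is the single point I expect to require a separate, hands-on argument, relying on the fact that by the Denjoy--Carleman theorem $\mathcal{B}$ is injective on $\mathcal{S}^{(N_p)}(\R)$ while the built-in rapid decay of these functions obstructs surjectivity onto $\Lambda_{\ast}$.

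I expect the hard part to be the verification of the hypotheses of Proposition~\ref{functional-analysis} in part~$(a)$: the non-trivial input is condition~$(3)$, which is precisely the Eidelheit-type Proposition~\ref{Eidelheit-Borel-1}, and the construction of the auxiliary space $E_{0}=\mathcal{F}(\mathcal{D}^{\ast}_{[-1,1]})$, which must simultaneously admit a right inverse of $\mathcal{M}$ (via Petzsche) and carry the Borel mapping in a locally bounded way. Everything else is careful bookkeeping between the two Fourier pictures and between the Banach scales defining the various spaces, together with the minor quasianalytic case $\neg(\gamma)$ flagged above.
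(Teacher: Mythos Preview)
Your approach is essentially the paper's proof: the same choice of $E,F,G,T,S$ in Proposition~\ref{functional-analysis}, the same auxiliary space $E_0$ built from $\mathcal{D}^{\ast}_{[-1,1]}$ via the Fourier transform, the same appeal to Proposition~\ref{Eidelheit-Borel-1} for condition~$(3)$, and the same reduction to Theorem~\ref{Borel-P} via a cutoff for the converse implications.

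Two points deserve sharpening. First, Proposition~\ref{functional-analysis} produces a right inverse landing in $\ker S$ \emph{with the relative topology induced by $E$}; in the Roumieu case you must invoke Lemma~\ref{ind-proj-description} to identify this with the $(LF)$-topology of $\Si^{0}_{\{N_p\}}(\R)$, which is the topology in the statement. Second, your handling of the quasianalytic case $\neg(\gamma)$ is too vague: injectivity of $\mathcal{B}$ together with ``rapid decay'' does not by itself obstruct surjectivity. The paper's argument is cleaner and goes in the direct direction: assuming $\mathcal{B}:\mathcal{S}^{(N_p)}(\R)\to\Lambda_{(N_p)}$ is surjective, pick $\varphi$ with $\mathcal{B}(\varphi)=(\delta_{0,p})_{p}$ and set $\psi=\varphi-1$; then $\psi$ is flat at $0$, of class $(N_p)$ on every compact interval, and nonzero because $\varphi(x)\to 0$ as $|x|\to\infty$, so Denjoy--Carleman forces $(\gamma)$. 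With these two fixes your outline is complete.
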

The rest of this section is devoted to the proofs of the above two results.

\begin{proof}[Proof of Proposition \ref{reduction-1}] We only show that $\mathcal{M}: \mathcal{S}_{(M_p)}(0,\infty) \rightarrow \Lambda_{(M_p)}$ has a continuous linear right inverse if and only if 
  $\mathcal{M}: \mathcal{S}_{(N_p)}^0(\R) \rightarrow \Lambda_{(N_p)}$ does so; all other statements follow from a similar argument. We start with the direct implication. The proof is divided into two steps. 

STEP I: \emph{$\mathcal{M}_{e}: \mathcal{S}_{(N_p)}(0,\infty) \rightarrow \Lambda_{(M_p)}: \varphi \rightarrow (\mu_{2p}(\varphi))_{p \in \N}$ and $\mathcal{M}_{o}: \mathcal{S}_{(N_p)}(0,\infty) \rightarrow \Lambda_{(M_p)}: \varphi \rightarrow (\mu_{2p+1}(\varphi))_{p \in \N}$ have a continuous linear right inverse.} Let $R: \Lambda_{(M_p)} \rightarrow \mathcal{S}_{(M_p)}(0,\infty)$ be a continuous linear right inverse of $\mathcal{M}: \mathcal{S}_{(M_p)}(0,\infty) \rightarrow \Lambda_{(M_p)}$. Lemmas \ref{reduction-2-0}$(b)$ and \ref{reduction-2-1}$(b)$ imply that the mapping  
$$
T_{e}: \mathcal{S}_{(M_p)}(0,\infty) \rightarrow \mathcal{S}_{(N_p)}(0,\infty), \quad T_{e}(\varphi)(x)=
\left\{
	\begin{array}{ll}
		2x\varphi(x^2),  &  \mbox{$x > 0$}, \\ \\
		0,  &   \mbox{$x \leq 0$},
	\end{array}
\right. 
$$
is well-defined and continuous.  We claim that $T_{e} \circ R : \Lambda_{(M_p)} \rightarrow \mathcal{S}_{(N_p)}(0,\infty)$ is a continuous linear right inverse of  $\mathcal{M}_{e}: \mathcal{S}_{(N_p)}(0,\infty) \rightarrow \Lambda_{(M_p)}$. Let $a = (a_p)_{p \in \N} \in \Lambda_{(M_p)}$ be arbitrary. Then,
\begin{align*}
&\mu_{2p}((T_{e} \circ R)(a)) = \int_0^\infty x^{2p}[(T_{e} \circ R)(a)](x) \dx \\
&= 2\int_0^\infty  x^{2p}[R(a)](x^2) x\dx 
= \int_0^\infty  x^{p} [R(a)](x) \dx 
= \mu_{p}(R(a)) = a_p
\end{align*}
for all $p \in \N$. Likewise, Lemma  \ref{reduction-2-1}$(b)$ yields that the mapping 
$$
T_{o}: \mathcal{S}_{(M_p)}(0,\infty) \rightarrow \mathcal{S}_{(N_p)}(0,\infty), \quad T_{o}(\varphi)(x)=
\left\{
	\begin{array}{ll}
		2\varphi(x^2),  &  \mbox{$x > 0$}, \\ \\
		0,  &   \mbox{$x \leq 0$},
	\end{array}
\right. 
$$
is well-defined and continuous. We claim that $T_{o} \circ R : \Lambda_{(M_p)} \rightarrow \mathcal{S}_{(N_p)}(0,\infty)$ is a continuous linear right inverse of  $\mathcal{M}_{o}: \mathcal{S}_{(N_p)}(0,\infty) \rightarrow \Lambda_{(M_p)}$. Let $a = (a_p)_{p \in \N} \in \Lambda_{(M_p)}$ be arbitrary. Then,
\begin{align*}
&\mu_{2p+1}((T_{o} \circ R)(a)) = \int_0^\infty x^{2p+1}[(T_{o} \circ R)(a)](x)  \dx \\
&= 2\int_0^\infty  x^{2p}[R(a)](x^2) x \dx 
= \int_0^\infty  x^{p} [R(a)](x)\dx 
= \mu_{p}(R(a)) = a_p
\end{align*}
for all $p \in \N$.

STEP II: \emph{$\mathcal{M}: \mathcal{S}_{(N_p)}^0(\R) \rightarrow \Lambda_{(N_p)}$ has a continuous linear right inverse.}
Let $R_{e}: \Lambda_{(M_p)} \rightarrow \mathcal{S}_{(N_p)}(0,\infty)$ and $R_{o}: \Lambda_{(M_p)} \rightarrow \mathcal{S}_{(N_p)}(0,\infty)$ be  continuous linear right inverses of $\mathcal{M}_{e}: \mathcal{S}_{(N_p)}(0,\infty) \rightarrow \Lambda_{(M_p)}$  and $\mathcal{M}_{o}: \mathcal{S}_{(N_p)}(0,\infty) \rightarrow \Lambda_{(M_p)}$, respectively. Consider the continuous mappings
$$
T_{e} : \Lambda_{(N_p)} \rightarrow \Lambda_{(M_p)}: (a_p)_{p \in \N} \rightarrow (a_{2p})_{p \in \N} , \quad T_{o}: \Lambda_{(N_p)} \rightarrow \Lambda_{(M_p)}: (a_p)_{p \in \N}  \rightarrow (a_{2p+1})_{p \in \N},
$$
and 
\begin{gather*}
S_{e}:  \mathcal{S}_{(N_p)}(0,\infty) \rightarrow  \mathcal{S}_{(N_p)}^0(\R): \varphi \rightarrow \frac{\varphi + \varphi (- \cdot)}{2}, \\
 S_{o}:  \mathcal{S}_{(N_p)}(0,\infty) \rightarrow  \mathcal{S}_{(N_p)}^0(\R): \varphi \rightarrow \frac{\varphi - \varphi (- \cdot)}{2}.
\end{gather*}
Let $a = (a_p)_{p \in \N} \in  \Lambda_{(N_p)}$ be arbitrary. Then, 
\begin{gather*}
\mu_{2p}((S_{e} \circ R_{e} \circ T_{e})(a)) = a_{2p}, \qquad \mu_{2p+1}((S_{e} \circ R_{e} \circ T_{e})(a)) = 0, \\
\mu_{2p}((S_{o} \circ R_{o} \circ T_{o})(a)) = 0, \qquad \mu_{2p+1}((S_{o} \circ R_{o} \circ T_{o})(a)) = a_{2p+1},
\end{gather*}
for all $p \in \N$. Hence, $S_{e} \circ R_{e} \circ T_{e} +  S_{o} \circ R_{o} \circ T_{o}$ is a continuous linear right inverse of $\mathcal{M}: \mathcal{S}_{(N_p)}^0(\R) \rightarrow \Lambda_{(N_p)}$. 

Next, we  show the converse implication. Again, we divide the proof into two steps.

STEP I: \emph{$\mathcal{M}_{e}: \mathcal{S}_{(N_p)}(0,\infty) \rightarrow \Lambda_{(M_p)}: \varphi \rightarrow (\mu_{2p}(\varphi))_{p \in \N} $ has a continuous linear right inverse.}  Let $R: \Lambda_{(N_p)} \rightarrow \mathcal{S}_{(N_p)}^0(\R)$ be a continuous linear right inverse of $\mathcal{M}: \mathcal{S}_{(N_p)}^0(\R) \rightarrow \Lambda_{(N_p)}$. Consider the continuous mappings $T: \Lambda_{(M_p)} \rightarrow \Lambda_{(N_p)}$ given by $T((a_p)_{p \in \N}) = (b_p)_{p \in \N}$, where
$$
b_p = \left\{
	\begin{array}{ll}
		a_q, &  \mbox{$p = 2q$, $q \in \N$}, \\ \\
		0,  &   \mbox{otherwise},
	\end{array}
\right. 
$$
and 
$$
S:  \mathcal{S}^0_{(N_p)}(\R) \rightarrow  \mathcal{S}_{(N_p)}(0,\infty), \quad S(\varphi)(x) = \left\{
	\begin{array}{ll}
		\varphi(x) + \varphi(-x), &  \mbox{$x > 0$}, \\ \\
		0,  &   \mbox{$x \leq 0$}.
	\end{array}
\right. 
$$
We claim that $S \circ R \circ T: \Lambda_{(M_p)} \rightarrow \mathcal{S}_{(N_p)}(0,\infty)$ is a continuous linear right inverse of  $\mathcal{M}_{e}: \mathcal{S}_{(N_p)}(0,\infty) \rightarrow \Lambda_{(M_p)}$. Let $a = (a_p)_{p \in \N} \in \Lambda_{(M_p)}$  be arbitrary. Then,
\begin{align*}
&\mu_{2p}((S \circ R \circ T)(a)) = \int_0^\infty  x^{2p}[(S \circ R \circ T)(a)](x) \dx \\
&= \int_0^\infty x^{2p} [(R \circ T)(a)](x)  \dx + \int_0^\infty x^{2p}[(R \circ T)(a)](-x)  \dx \\
&=   \int_{-\infty}^\infty   x^{2p}[(R \circ T)(a)](x) \dx = \mu_{2p}(R(T(a))) = (T(a))_{2p} = a_p
\end{align*}
for all $p \in \N$.

STEP II: \emph{$\mathcal{M}: \mathcal{S}_{(M_p)}(0,\infty) \rightarrow \Lambda_{(M_p)}$ has a continuous linear right inverse.} Let $R_e: \Lambda_{(M_p)} \rightarrow \mathcal{S}_{(N_p)}(0,\infty)$ be a continuous linear right inverse of $\mathcal{M}_{\operatorname{e}}: \mathcal{S}_{(N_p)}(0,\infty) \rightarrow \Lambda_{(M_p)}$. Lemmas \ref{reduction-2-0}$(a)$  and \ref{reduction-2-1}$(a)$ imply that the mapping 
$$
T: \mathcal{S}_{(N_p)}(0,\infty) \rightarrow \mathcal{S}_{(M_p)}(0,\infty), \quad T(\varphi)(x)=
\left\{
	\begin{array}{ll}
		\frac{\varphi(x^{1/2})}{2x^{1/2}},  &  \mbox{$x > 0$}, \\ \\
		0,  &   \mbox{$x \leq 0$},
	\end{array}
\right. 
$$
is well-defined and continuous. We claim that $T \circ R_e : \Lambda_{(M_p)} \rightarrow \mathcal{S}_{(M_p)}(0,\infty)$ is a continuous linear right inverse of  $\mathcal{M}: \mathcal{S}_{(M_p)}(0,\infty) \rightarrow \Lambda_{(M_p)}$. Let $a = (a_p)_{p \in \N} \in \Lambda_{(M_p)}$ be arbitrary. Then,
\begin{align*}
&\mu_{p}((T \circ R_e)(a)) = \int_0^\infty x^{p}[(T \circ R_e)(a)](x)  \dx = \int_0^\infty x^{p} \frac{[R_e(a)](x^{1/2})}{2x^{1/2}}  \dx \\
&= \int_0^\infty x^{2p} [R_e(a)](x)  \dx = \mu_{2p}(R_e(a)) = a_p
\end{align*}
for all $p \in \N$.
\end{proof}

\begin{proof}[Proof of Proposition \ref{moment-flat}]
$(a)$ $(i) \Rightarrow (ii)$  We apply Proposition \ref{functional-analysis} with $E = \Sb_{(N_p)}(\R)$, $F = \Lambda_{(N_p)}$, $G = \C^\N$, $T = \mathcal{M}:  \Sb_{(N_p)}(\R) \rightarrow \Lambda_{(N_p)}$ and $S = \mathcal{B}:  \Sb_{(N_p)}(\R) \rightarrow \C^\N$. 
Lemma \ref{ind-proj-description} yields that the topology induced by $E = \Sb_{(N_p)}(\R)$ on $\ker S = \Sb^0_{(N_p)}(\R)$ coincides with the original topology of $\Sb^0_{(N_p)}(\R)$.
We now verify conditions $(1)$-$(4)$: $(1)$ Obvious. $(2)$ The sequence of standard unit vectors is an absolute Schauder basis in $\Lambda_{(N_p)}$. $(3)$ This has been shown in Proposition \ref{Eidelheit-Borel-1}. $(4)$ Set $E_0 =\mathcal{F}^{-1}\left(\mathcal{D}^{(N_p)}_{[-1,1]} \right)$ and endow it with the topology generated by the system of seminorms $\left \{ p \circ \mathcal{F} \, | \, p \in \csn(\mathcal{D}^{(N_p)}_{[-1,1]}) \right\}$. Notice that $\mathcal{D}^{(N_p)}_{[-1,1]} \subset \mathcal{S}^{(N_p)}(\R)$ with continuous inclusion mapping and recall that $\mathcal{F}: \Sb_{(N_p)}(\R) \rightarrow \Sb^{(N_p)}(\R)$ is a topological isomorphism. Hence, $(4.1)$ is clear, while $(4.2)$ follows from Theorem \ref{Borel-P}$(a)$. Finally, 
$$
|\varphi^{(p)}(0)| \leq \frac{1}{2\pi} \int_{-1}^{1} |x|^p |\widehat{\varphi}(x)|  \dx \leq \frac{1}{\pi} \| \widehat{\varphi}\|_{L^\infty([-1,1])}, \qquad p \in \N,
$$
for all $\varphi \in E_0$, whence $(4.3)$ holds.

$(ii) \Rightarrow (iii)$ Trivial.

$(iii) \Rightarrow (i)$ In particular, $\mathcal{M}: \mathcal{S}_{(N_p)}(\R) \rightarrow \Lambda_{(N_p)}$ is surjective. By taking the Fourier transform, we obtain that $\mathcal{B}: \mathcal{S}^{(N_p)}(\R) \rightarrow \Lambda_{(N_p)}$ is surjective. Choose $\varphi \in \mathcal{S}^{(N_p)}(\R)$ such that $\varphi^{(p)}(0) = \delta_{0,p}$ for all $p \in \N$. Set $\psi = \varphi - 1$. Then,
$$
\sup_{p \in \N} \sup_{x \in \R} \frac{h^p|\psi^{(p)}(x)|}{N_p} < \infty
$$ 
for all $h > 0$ and $\psi^{(p)}(0) = 0$ for all $p \in \N$. Since $\lim_{|x|\to \infty} \varphi(x) = 0$, $\psi$ is not identically zero. Hence, the Denjoy-Carleman theorem implies that $(N_p)_{p \in \N}$ satisfies $(\gamma)$.  By Theorem \ref{Borel-P}$(a)$, it therefore suffices to show that the mapping $\mathcal{B}: \mathcal{D}^{(N_p)}_{[-1,1]} \rightarrow  \Lambda_{(N_p)}$ is surjective. Let $a \in \Lambda_{(N_p)}$ be arbitrary and choose $\varphi \in \mathcal{S}^{(N_p)}(\R)$ such that $\mathcal{B}(\varphi) = a$. Pick $\psi \in \mathcal{D}^{(N_p)}_{[-1,1]}$ such that $\psi \equiv 1$ in a neighbourhood of 0. Then, $\varphi\psi \in  \mathcal{D}^{(N_p)}_{[-1,1]}$  and $\mathcal{B}(\varphi \psi) = a$.

 $(b)$ $(i) \Rightarrow (ii)$ We apply Lemma \ref{surjectivity-kernel} with $E = \Si_{\{N_p\}}(\R)$, $F = \Lambda_{\{N_p\}}$, $G = \C^\N$, $T = \mathcal{M}:  \Si_{\{N_p\}}(\R) \rightarrow \Lambda_{\{N_p\}}$ and $S = \mathcal{B}:  \Si_{\{N_p\}}(\R) \rightarrow \C^\N$.  Theorem \ref{Borel-P}$(b)$ and  the inclusion $\mathcal{D}^{\{N_p\}}_{[-1,1]} \subset  \Si^{\{N_p\}}(\R)$ yield that $\mathcal{B}:  \Si^{\{N_p\}}(\R) \rightarrow \Lambda_{\{N_p\}}$ is surjective. By taking the Fourier transform, we obtain that $T$ is surjective.  $S_{|\ker T}$ is surjective because of Proposition \ref{Eidelheit-Borel} and the inclusion $\mathcal{S}_{(N_p)}(\R) \subset  \Si_{\{N_p\}}(\R)$.

 $(ii) \Rightarrow (iii)$ This follows from the continuous inclusion  $\Si_{\{N_p\}}(\R) \subset   \Sp_{\{N_p\}}(\R)$.

 $(iii) \Rightarrow (i)$ This can be shown in a similar way as $(iii) \Rightarrow (i)$ from part $(a)$.

$(c)$ $(i) \Rightarrow (ii)$ We apply Proposition \ref{functional-analysis} with $E = \Si_{\{N_p\}}(\R)$, $F = \Lambda_{\{N_p\}}$, $G = \C^\N$, $T = \mathcal{M}:  \Si_{\{N_p\}}(\R) \rightarrow \Lambda_{\{N_p\}}$ and $S = \mathcal{B}:  \Si_{\{N_p\}}(\R) \rightarrow \C^\N$. 
Lemma \ref{ind-proj-description} yields that the topology induced by $E =\Si_{\{N_p\}}(\R)$ on $\ker S = \Si^0_{\{N_p\}}(\R)$ coincides with the original topology of $\Si^0_{\{N_p\}}(\R)$.
We now verify conditions $(1)$-$(4)$: $(1)$ $\Si_{\{N_p\}}(\R)$ is complete by Proposition \ref{complete}. $(2)$ The sequence of standard unit vectors is an absolute Schauder basis in $\Lambda_{\{N_p\}}$. $(3)$ This follows from Proposition \ref{Eidelheit-Borel-1} and the continuous inclusion  $\mathcal{S}_{(N_p)}(\R) \subset  \Si_{\{N_p\}}(\R)$. $(4)$ Set $E_0 =\mathcal{F}^{-1}\left(\mathcal{D}^{\{N_p\}}_{[-1,1]} \right)$ and endow it with the topology generated by the system of seminorms $\left \{ p \circ \mathcal{F} \, | \, p \in \csn(\mathcal{D}^{\{N_p\}}_{[-1,1]}) \right\}$. Notice that $\mathcal{D}^{\{N_p\}}_{[-1,1]} \subset \Si^{\{N_p\}}(\R)$ with continuous inclusion mapping and recall that $\mathcal{F}: \Si_{\{N_p\}}(\R) \rightarrow \Si^{\{N_p\}}(\R)$ is a topological isomorphism. Hence, $(4.1)$ is clear, while $(4.2)$ follows from Theorem \ref{Borel-P}$(c)$. Finally, 
$$
|\varphi^{(p)}(0)| \leq \frac{1}{2\pi} \int_{-1}^{1} |x|^p |\widehat{\varphi}(x)|  \dx \leq \frac{1}{\pi} \| \widehat{\varphi}\|_{L^\infty([-1,1])}, \qquad p \in \N,
$$
for all $\varphi \in E_0$, whence $(4.3)$ holds.


$(ii) \Rightarrow (iii)$  This follows from the inclusion  $\Si_{\{N_p\}}(\R) \subset   \Sp_{\{N_p\}}(\R)$.

$(iii) \Rightarrow (i)$  This can be shown in a similar way as $(iii) \Rightarrow (i)$ from part $(a)$.
\end{proof}
\section{The Stieltjes moment problem in $\mathcal{S}^\dagger_\ast(0,\infty)$ and the Borel-Ritt problem in spaces of ultraholomorphic functions  on the upper half-plane}\label{sect-cor}
In this final section, we show an analogue of Theorem \ref{main-theorem-1} both for the Stieltjes moment problem in Gelfand-Shilov spaces of type $\mathcal{S}^\dagger_\ast(0,\infty)$ and the Borel-Ritt problem in spaces of ultraholomorphic functions  on the upper half-plane  $\HH = \{ z \in \C \, | \,  \Im m z > 0 \}$.

We start by introducing Gelfand-Shilov spaces of type $\mathcal{S}^\dagger_\ast$. Let $(M_p)_{p \in \N}$ and $(A_p)_{p \in \N}$ be two weight sequences. For $h> 0$ we write $\mathcal{S}^{A_p,h}_{M_p,h}(\R)$ for the Banach space consisting of all $\varphi \in C^{\infty}(\R)$ such that
$$
\| \varphi\|_{\mathcal{S}^{A_p,h}_{M_p,h}} := \sup_{p,q \in \N} \sup_{x \in \R} \frac{h^{p+q} |x^p\varphi^{(q)}(x)|}{A_qM_p} < \infty.
$$
We set
$$
\mathcal{S}^{(A_p)}_{(M_p)}(\R) := \varprojlim_{h \to \infty} \mathcal{S}^{A_p,h}_{M_p,h}(\R), \qquad \mathcal{S}^{\{A_p\}}_{\{M_p\}}(\R) := \varinjlim_{h \to 0^+} \mathcal{S}^{A_p,h}_{M_p,h}(\R).
$$
$\mathcal{S}^{(A_p)}_{(M_p)}(\R)$ is a Fr\'echet space, while $\mathcal{S}^{\{A_p\}}_{\{M_p\}}(\R)$ is an $(LB)$-space.  
Similarly as before, we will sometimes use $\mathcal{S}^{\dagger}_{\ast}(\R)$ as a common notation for $\mathcal{S}^{(A_p)}_{(M_p)}(\R)$ and $\mathcal{S}^{\{A_p\}}_{\{M_p\}}(\R)$. If both $(M_p)_{p \in \N}$ and $(A_p)_{p \in \N}$ satisfy $(\operatorname{lc})$, $(\operatorname{dc})$ and $(\gamma)$,  the Fourier transform is a topological isomorphism from $\mathcal{S}^{\dagger}_{\ast}(\R)$ onto $\mathcal{S}_{\dagger}^{\ast}(\R)$ (cf.\ \cite[Sect.\ IV.6]{G-S} and  \cite[Lemma 4.1]{Komatsu}).
 
Let $h > 0$. We define the following closed subspace of $\mathcal{S}^{A_p,h}_{M_p,h}(\R)$
$$
\mathcal{S}^{A_p,h}_{M_p,h}(0,\infty) := \{ \varphi \in \mathcal{S}^{A_p,h}_{M_p,h}(\R) \, | \, \operatorname{supp} \varphi \subseteq  [0, \infty) \}
$$
and endow it with the norm $\| \, \cdot \, \|_{\mathcal{S}^{A_p,h}_{M_p,h}}$.  Hence, it becomes a Banach space. We set
$$
\mathcal{S}^{(A_p)}_{(M_p)}(0,\infty) := \varprojlim_{h \to \infty} \mathcal{S}^{A_p,h}_{M_p,h}(0,\infty), \qquad \mathcal{S}^{\{A_p\}}_{\{M_p\}}(0,\infty) := \varinjlim_{h \to 0^+} \mathcal{S}^{A_p,h}_{M_p,h}(0,\infty).
$$
$\mathcal{S}^{(A_p)}_{(M_p)}(0,\infty)$ is a Fr\'echet space, while $\mathcal{S}^{\{A_p\}}_{\{M_p\}}(0,\infty)$ is an $(LB)$-space.
Notice that 
\begin{equation}
\mathcal{S}^{\dagger}_{\ast}(0,\infty) = \{ \varphi \in \mathcal{S}^{\dagger}_{\ast}(\R)  \, | \, \operatorname{supp} \varphi \subseteq  [0, \infty) \}
\label{DFS}
\end{equation}
as sets. In the Beurling case, it is clear that \eqref{DFS} also holds topologically if we endow the space at the right-hand side with the relative topology induced by $\mathcal{S}^{(A_p)}_{(M_p)}(\R)$. If $(M_p)_{p \in \N}$ satisfies $\operatorname{(lc)}$ and $\operatorname{(dc)}$, the corresponding statement also holds in the Roumieu case. By \cite[Prop.\ 3.4]{Komatsu}, these assumptions imply that $\mathcal{S}^{\{A_p\}}_{\{M_p\}}(\R)$ is a $(DFS)$-space, whence the result follows from the fact that a closed subspace of a $(DFS)$-space is again a $(DFS)$-space and De Wilde's open mapping theorem. 

The image of $\mathcal{S}^{\dagger}_{\ast}(0,\infty)$ under the Fourier transform can be described as follows.
 
\begin{lemma}\label{Fourier-char-supp} \emph{(cf.\ \cite[Prop.\ 2.1]{C-C-K})} Let $(M_p)_{p \in \N}$ and $(A_p)_{p \in \N}$ be two weight sequences satisfying $(\operatorname{lc})$, $(\operatorname{dc})$  and $(\gamma)$. Let $\psi  \in \mathcal{S}_{\dagger}^{\ast}(\R)$. Then, $\psi \in \mathcal{F}(\mathcal{S}^{\dagger}_{\ast}(0,\infty))$ if and only if there is $\Psi: \overline{\HH} \rightarrow \C$ satisfying the following conditions:
\begin{itemize}
\item[$(i)$] $\Psi_{|\R} = \psi$.
\item[$(ii)$] $\Psi$ is continuous on $\overline{\HH}$ and holomorphic on $\HH$.
\item[$(iii)$] $\lim_{z \in \overline{\HH}, z \to \infty} \Psi(z) = 0$.
\end{itemize}
\end{lemma}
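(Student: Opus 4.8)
The plan is to reduce the statement, via the Fourier isomorphism $\mathcal{F}: \mathcal{S}^{\dagger}_{\ast}(\R) \to \mathcal{S}_{\dagger}^{\ast}(\R)$ (available since both $(M_p)_{p\in\N}$ and $(A_p)_{p\in\N}$ satisfy $(\operatorname{lc})$, $(\operatorname{dc})$ and $(\gamma)$) together with the description \eqref{DFS} of $\mathcal{S}^{\dagger}_{\ast}(0,\infty)$, to the following concrete Paley--Wiener type statement: for $\varphi \in \mathcal{S}^{\dagger}_{\ast}(\R)$ one has $\operatorname{supp}\varphi \subseteq [0,\infty)$ if and only if $\widehat{\varphi}$ extends to a function $\Psi$ on $\overline{\HH}$ satisfying $(i)$--$(iii)$. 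Both implications are then established by direct complex analysis on $\overline{\HH}$.

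For necessity, given $\varphi \in \mathcal{S}^{\dagger}_{\ast}(0,\infty)$ I would set
$$
\Psi(z) := \int_0^\infty \varphi(x) e^{ixz}\, \dx, \qquad z \in \overline{\HH}.
$$
Since $|e^{ixz}| = e^{-x\operatorname{Im}z} \leq 1$ for $x \geq 0$ and $z \in \overline{\HH}$, the integral converges absolutely, dominated by $\|\varphi\|_{L^1(\R)}$; dominated convergence then gives continuity on $\overline{\HH}$, while differentiation under the integral sign (or Morera's theorem with Fubini) gives holomorphy on $\HH$, so $(ii)$ holds. Property $(i)$ is immediate from $\operatorname{supp}\varphi \subseteq [0,\infty)$. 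For $(iii)$ I would integrate by parts $n$ times, using $\varphi^{(m)}(0) = 0$ for all $m$ together with the decay of $\varphi^{(m)}$ at $+\infty$, to get $\Psi(z) = (-1)^n (iz)^{-n}\int_0^\infty \varphi^{(n)}(x) e^{ixz}\,\dx$, hence $|\Psi(z)| \leq \|\varphi^{(n)}\|_{L^1(\R)}\,|z|^{-n}$ on $\HH$ and, by continuity, on $\overline{\HH}\setminus\{0\}$; taking $n = 1$ yields $(iii)$.

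For sufficiency, given $\psi \in \mathcal{S}_{\dagger}^{\ast}(\R)$ with an extension $\Psi$ as in $(i)$--$(iii)$, I would put $\varphi := \mathcal{F}^{-1}(\psi) \in \mathcal{S}^{\dagger}_{\ast}(\R)$ and show $\varphi(x) = 0$ for every $x < 0$; as $\varphi$ is continuous, this gives $\operatorname{supp}\varphi \subseteq [0,\infty)$ and hence $\psi = \widehat{\varphi} \in \mathcal{F}(\mathcal{S}^{\dagger}_{\ast}(0,\infty))$ by \eqref{DFS}. The idea is to shift the contour in $2\pi\varphi(x) = \int_{\R} \psi(\xi)e^{-ix\xi}\,\dxi$ upward into $\HH$ and let the height tend to $+\infty$, using that $|e^{-ix\xi}| = e^{x\operatorname{Im}\xi} \to 0$ for $x < 0$. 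The obstacle is that $(iii)$ only provides $\Psi \to 0$ at infinity, not integrability of $\Psi$ along horizontal lines, so the shifted integral need not converge. To circumvent this I would introduce the regularizer $G(\xi) := \big((e^{i\xi}-1)/(i\xi)\big)^2 = \widehat{\rho}(\xi)$, where $\rho := \mathbf{1}_{[0,1]} * \mathbf{1}_{[0,1]}$ is supported in $[0,2]$: $G$ is entire, $G(0) = 1$, and $|G(\xi)| \leq \min(1, 4/|\xi|^2)$ for $\xi \in \overline{\HH}$. Then $\Psi_\varepsilon(\xi) := \Psi(\xi) G(\varepsilon\xi)$ is holomorphic on $\HH$, continuous and bounded on $\overline{\HH}$ (note $\Psi$ is bounded, being continuous and vanishing at infinity), and decays like $|\xi|^{-2}$, hence is integrable on every horizontal line. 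Applying Cauchy's theorem to $\Psi_\varepsilon(\xi)e^{-ix\xi}$ on the rectangles $[-R,R]\times[0,T]$ and letting $R \to \infty$ (the vertical sides vanish since $\Psi_\varepsilon(\pm R + it) = O_\varepsilon(R^{-2})$ uniformly for $t \in [0,T]$) gives, for $x < 0$,
$$
2\pi(\varphi * \rho_\varepsilon)(x) = \int_{\R}\psi(\xi)G(\varepsilon\xi)e^{-ix\xi}\,\dxi = e^{xT}\int_{\R}\Psi_\varepsilon(\xi+iT)e^{-ix\xi}\,\dxi,
$$
where $\rho_\varepsilon(x) = \varepsilon^{-1}\rho(x/\varepsilon)$ is supported in $[0,2\varepsilon]$ with $\int\rho_\varepsilon = 1$. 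The left-hand side does not depend on $T$, while the right-hand side is bounded by $e^{xT}\,\varepsilon^{-2}\|\Psi\|_\infty\,(4\pi/T) \to 0$ as $T \to \infty$; therefore $(\varphi * \rho_\varepsilon)(x) = 0$ for all $x < 0$ and all $\varepsilon > 0$. Letting $\varepsilon \to 0^+$ and using continuity of $\varphi$ yields $\varphi(x) = 0$ for $x < 0$, completing the argument.

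I expect the only genuinely delicate point to be the integrability obstruction in the sufficiency direction and the construction of the regularizer $G$ (supported in $[0,2]$, with quadratic decay, normalized at $0$); the Fourier--Laplace representation, the integration-by-parts bound, and the final mollifier limit are routine.
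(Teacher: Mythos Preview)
The paper does not actually prove this lemma; it merely states it with the attribution ``(cf.\ \cite[Prop.\ 2.1]{C-C-K})'' and moves on. Your proposal therefore supplies a proof where the paper gives none, and the argument you outline is correct and self-contained.

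A few remarks on the details. In the necessity direction your integration-by-parts bound $|\Psi(z)|\le\|\varphi^{(n)}\|_{L^1}|z|^{-n}$ is valid on all of $\overline{\HH}\setminus\{0\}$ directly (the boundary terms vanish since $\varphi^{(m)}(0)=0$ and $|\varphi^{(m)}(x)e^{ixz}|\to 0$ as $x\to+\infty$ for $\Im z\ge 0$), so the appeal to continuity is not even needed. In the sufficiency direction your regularizer $G(\xi)=\bigl((e^{i\xi}-1)/(i\xi)\bigr)^2$ is well chosen: the representation $(e^{i\xi}-1)/(i\xi)=\int_0^1 e^{it\xi}\,\dt$ immediately gives $|G(\xi)|\le 1$ on $\overline{\HH}$, and the quadratic decay makes the shifted integrals absolutely convergent. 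The only point worth making explicit is that Cauchy's theorem for the rectangle with one side on $\R$ is justified by the continuity of $\Psi_\varepsilon$ on $\overline{\HH}$ together with a standard $\delta\to 0^+$ limiting argument; you implicitly assume this, and it is routine. The final mollifier limit uses $\int\rho_\varepsilon=G(0)=1$ and $\operatorname{supp}\rho_\varepsilon\subseteq[0,2\varepsilon]$, both of which you note.

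In short: your approach is a clean Paley--Wiener argument, more detailed than what the paper offers (a bare citation), and I find no gaps.
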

Next, we define spaces of ultraholomorphic functions on $\HH$. Given an open subset $\Omega \subseteq \C$, we denote by $\mathcal{O}(\Omega)$ the space of holomorphic functions on $\Omega$. Let $(M_p)_{p \in \N}$ be a weight sequence. For $h>0$ we write $\mathcal{A}^{M_p,h}(\HH)$ for the Banach space consisting of all $f \in \mathcal{O}(\HH)$ such that
$$
\| f \|_{\mathcal{A}^{M_p,h}} := \sup_{p \in \N} \sup_{z \in \HH} \frac{h^p|f^{(p)}(z)|}{M_p} < \infty.
$$
We set 
$$
\mathcal{A}^{(M_p)}(\HH) := \varprojlim_{h \to \infty} \mathcal{A}^{M_p,h}(\HH), \qquad \mathcal{A}^{\{M_p\}}(\HH) := \varinjlim_{h \to 0^+} \mathcal{A}^{M_p,h}(\HH).
$$
$\mathcal{A}^{(M_p)}(\HH)$ is a Fr\'echet space, while $\mathcal{A}^{\{M_p\}}(\HH)$ is an $(LB)$-space. Let $f \in \mathcal{A}^*(\HH)$ be arbitrary. Since $f$ and all its derivatives are Lipschitz on $\HH$, it holds that
$$
f_p(x) = \lim_{z \to x, z \in \HH} f^{(p)}(z) \in \C, \qquad x \in \R,
$$
exists for all $p \in \N$. Moreover, $f_0 \in C^\infty(\R)$ and $f_0^{(p)} = f_p$ for all $p \in \N$. From now on, we simply write $f_0 = f$.  The \emph{asymptotic Borel mapping}
$$
\mathcal{B}: \mathcal{A}^{\ast}(\HH) \rightarrow \Lambda_{\ast}: f \rightarrow (f^{(p)}(0))_{p \in \N}
$$
is well-defined and continuous.

We are ready to prove the two main results of this section.
\begin{theorem}\label{main-theorem-1} Let $(M_p)_{p \in \N}$ be a weight sequence satisfying $(\operatorname{lc})$ and $(\operatorname{dc})$, and let $(A_p)_{p \in \N}$ be a weight sequence satisfying $(\operatorname{lc})$ and $(\gamma)$.
\begin{itemize}
\item[$(a)$] The following statements are equivalent:
\begin{itemize}
\item[$(i)$] $(M_p)_{p \in \N}$ satisfies $(\gamma_2)$.
\item[$(ii)$] $\mathcal{M}: \mathcal{S}^{(A_p)}_{(M_p)}(0,\infty) \rightarrow \Lambda_{(M_p)}$ has a continuous linear right inverse.
\item[$(iii)$] $\mathcal{M}: \mathcal{S}^{(A_p)}_{(M_p)}(0,\infty) \rightarrow \Lambda_{(M_p)}$ is surjective.
\end{itemize}
\item[$(b)$] $(M_p)_{p \in \N}$ satisfies $(\gamma_2)$ if and only if  $\mathcal{M}: \mathcal{S}^{\{A_p\}}_{\{M_p\}}(0,\infty) \rightarrow \Lambda_{\{M_p\}}$ is surjective.
\item[$(c)$] $(M_p)_{p \in \N}$ satisfies $(\gamma_2)$ and $(\beta_2)$ if and only if  $\mathcal{M}: \mathcal{S}^{\{A_p\}}_{\{M_p\}}(0,\infty) \rightarrow \Lambda_{\{M_p\}}$ has a continuous linear right inverse.
\end{itemize}
\end{theorem}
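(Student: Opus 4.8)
The plan is to deduce Theorem~\ref{main-theorem-1} from Theorem~\ref{main-theorem} by ``regularising'' with a fixed compactly supported mollifier of class $(A_p)$; the decisive structural input is that condition $(\gamma)$, which is implied by $(\gamma_2)$, already forces $M_p/p!\to\infty$.

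The \emph{necessity} directions are immediate. Since $A_q<\infty$ for every $q$, there are continuous inclusions $\mathcal{S}^{(A_p)}_{(M_p)}(0,\infty)\hookrightarrow\mathcal{S}_{(M_p)}(0,\infty)$ and $\mathcal{S}^{\{A_p\}}_{\{M_p\}}(0,\infty)\hookrightarrow\Si_{\{M_p\}}(0,\infty)$, and $\mathcal{M}$ on the larger space restricts to $\mathcal{M}$ on the smaller one. Hence surjectivity of $\mathcal{M}$ on $\mathcal{S}^\dagger_\ast(0,\infty)$ transfers to $\mathcal{S}_\ast(0,\infty)$, and a continuous linear right inverse on $\mathcal{S}^\dagger_\ast(0,\infty)$ composed with the inclusion yields one on $\mathcal{S}_\ast(0,\infty)$; so $(iii)\Rightarrow(i)$ in $(a)$ and the ``only if'' parts of $(b)$ and $(c)$ all follow from Theorem~\ref{main-theorem}.

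For the \emph{sufficiency} directions I would fix, using $(\operatorname{lc})$ and $(\gamma)$ for $(A_p)_{p\in\N}$ and the Denjoy--Carleman theorem, a nonnegative $\rho\not\equiv 0$ in the ultradifferentiable class of type $(A_p)$ (of Beurling type in cases $(a)$, $(b)$ right-inverse-free, and of Roumieu type otherwise) with $\operatorname{supp}\rho\subseteq[r_0,R]\subset(0,\infty)$, and study the convolution operator $\mathcal{C}_\rho(\varphi)=\varphi*\rho$. The key claim is that $\mathcal{C}_\rho$ maps $\mathcal{S}_{(M_p)}(0,\infty)$ continuously into $\mathcal{S}^{(A_p)}_{(M_p)}(0,\infty)$, resp.\ $\Si_{\{M_p\}}(0,\infty)$ continuously into $\mathcal{S}^{\{A_p\}}_{\{M_p\}}(0,\infty)$: the support lands in $[r_0,\infty)$, the derivatives $(\varphi*\rho)^{(q)}=\varphi*\rho^{(q)}$ inherit the weight $(A_q)_{q\in\N}$ from $\rho$, and their decay is inherited from $\varphi$ after writing $x^p=\sum_j\binom pj(x-y)^jy^{p-j}$ and using $M_jM_{p-j}\le M_p$ (which follows from $(\operatorname{lc})$ and $M_0=1$) together with $R^{p-j}\le C_sM_{p-j}s^{-(p-j)}$ for arbitrarily large $s$ (valid since $M_p^{1/p}\to\infty$). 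On the moment side, writing $a*b$ for the convolution $(a*b)_p=\sum_k\binom pk a_kb_{p-k}$, one checks that $(\Lambda_\ast,*)$ is a commutative topological algebra with unit $e_0=(1,0,0,\dots)$ and that $\mathcal{M}(\varphi*\rho)=\mathcal{M}(\varphi)*c$ with $c:=\mathcal{M}(\rho)=(\mu_p(\rho))_{p\in\N}\in\Lambda_\ast$. The entire function $C(t):=\sum_pc_pt^p/p!=\int_0^\infty\rho(x)e^{xt}\dx$ satisfies $C(0)=\mu_0(\rho)\neq0$, so $1/C$ is holomorphic near $0$; the sequence $d$ with $d_p=p!\,[t^p](1/C)$ then obeys $|d_p|\le C_1 p!\,R_0^{-p}$ and $c*d=e_0$, and crucially $d\in\Lambda_\ast$: since $(\operatorname{lc})$ and $(\gamma)$ give $m_p/p\to\infty$, one has $M_p/(p!\,t^p)\to\infty$ for every $t>0$, whence $\sup_p h^p|d_p|/M_p<\infty$ for all $h>0$. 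Thus $T_c:=(c*\,\cdot\,)$ is a topological automorphism of $\Lambda_\ast$ with inverse $T_d:=(d*\,\cdot\,)$.

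Assembling: if $(M_p)_{p\in\N}$ satisfies $(\gamma_2)$ (and, in the Roumieu right-inverse case, additionally $(\beta_2)$), Theorem~\ref{main-theorem} gives that $\mathcal{M}$ on $\mathcal{S}_\ast(0,\infty)$ is surjective and, in the cases $(a)$ and $(c)$, admits a continuous linear right inverse $R$. Given $a\in\Lambda_\ast$ one picks $\varphi\in\mathcal{S}_\ast(0,\infty)$ with $\mathcal{M}(\varphi)=T_d(a)$ and obtains $\mathcal{C}_\rho(\varphi)\in\mathcal{S}^\dagger_\ast(0,\infty)$ with $\mathcal{M}(\mathcal{C}_\rho(\varphi))=T_c(T_d(a))=a$, while $\mathcal{C}_\rho\circ R\circ T_d$ is a continuous linear right inverse of $\mathcal{M}$ on $\mathcal{S}^\dagger_\ast(0,\infty)$; this gives $(i)\Rightarrow(ii)\Rightarrow(iii)$ in $(a)$ and the ``if'' parts of $(b)$ and $(c)$. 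I expect the main obstacle to be the continuity of $\mathcal{C}_\rho$ \emph{into} $\mathcal{S}^\dagger_\ast(0,\infty)$: one must control the fresh weight $(A_q)_{q\in\N}$ on the derivatives and the old weight $(M_p)_{p\in\N}$ on the decay simultaneously, which forces a careful splitting of the convolution integral according to the size of $|x|$ and bookkeeping with the $(LF)$/$(LB)$ step structures; the conceptual points are the identity $\mathcal{M}\circ\mathcal{C}_\rho=T_c\circ\mathcal{M}$ and the membership $d\in\Lambda_\ast$, the latter being exactly where the implication $(\gamma)\Rightarrow M_p/p!\to\infty$ enters.
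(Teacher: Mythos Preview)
Your argument is correct and genuinely different from the paper's. The paper deduces Theorem~\ref{main-theorem-1} from Theorem~\ref{main-theorem} by passing through the ultraholomorphic space $\mathcal{A}^{\ast}(\HH)$: the Laplace transform sends $\mathcal{S}_\ast(0,\infty)$ into $\mathcal{A}^{\ast}(\HH)$, and then a specially constructed holomorphic multiplier $G\in\mathcal{O}(V)$ with $|G(z)|\le Ce^{-A(h|z|)}$ (imported from \cite[Lemma~3.1]{D-J-S}) is used to push $f\mapsto\mathcal{F}^{-1}((fG)_{|\R})$ back into $\mathcal{S}^\dagger_\ast(0,\infty)$; the moment bookkeeping is handled by a Leibniz-type sum in the derivatives of $G$ and $1/G$ at~$0$. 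Your route avoids the ultraholomorphic detour entirely: you stay on the real line, regularise by convolving with a compactly supported $\rho$ of class $(A_p)$ (whose existence is exactly Denjoy--Carleman for $(A_p)$), and undo the effect on moments via the convolution algebra structure of $\Lambda_\ast$. The two are Fourier-dual in spirit (your $\widehat\rho$ plays a role analogous to the paper's $G$), but your version is more self-contained: it needs no Paley--Wiener characterisation of $\mathcal{F}(\mathcal{S}^\dagger_\ast(0,\infty))$ and no external construction of $G$. The price is that you do not simultaneously obtain the equivalence with the Borel--Ritt problem on $\HH$ (the paper's Theorem~\ref{main-theorem-2}), which the paper gets for free from the same Proposition~\ref{equivalences}. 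Two small remarks: your parenthetical about which type of $\rho$ to take is garbled---you need $\rho\in\mathcal{D}^{(A_p)}_{[r_0,R]}$ for part~$(a)$ and $\rho\in\mathcal{D}^{\{A_p\}}_{[r_0,R]}$ for parts~$(b)$ and~$(c)$; and the fact $(p!/M_p)^{1/p}\to 0$ that you invoke for $d\in\Lambda_{(M_p)}$ is indeed available under $(\operatorname{lc})$ and $(\gamma)$ (the paper cites it as \cite[Lemma~4.1]{Komatsu}), and $(\gamma)$ is implied by your standing hypothesis $(\gamma_2)$.
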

\begin{remark}
In \cite[Thm.\ 3.5]{D-J-S}, the direct implication of Theorem \ref{main-theorem-1}$(b)$ was shown under the assumptions $\operatorname{(slc)}$ (= $(M_p/p!)_{p \in \N}$ satisfies $\operatorname{(lc)}$)  and $\operatorname{(mg)}$, while the converse implication was shown under the assumptions  $\operatorname{(slc)}$ and $\operatorname{(dc)}$.
\end{remark}
\begin{theorem}\label{main-theorem-2} Let $(M_p)_{p \in \N}$ be a weight sequence satisfying $(\operatorname{lc})$ and $(\operatorname{dc})$.
\begin{itemize}
\item[$(a)$] The following statements are equivalent:
\begin{itemize}
\item[$(i)$] $M_p$ satisfies $(\gamma_2)$.
\item[$(ii)$] $\mathcal{B} : \mathcal{A}^{(M_p)}(\HH) \rightarrow \Lambda_{(M_p)}$ has a continuous linear right inverse.
\item[$(iii)$] $\mathcal{B} : \mathcal{A}^{(M_p)}(\HH) \rightarrow \Lambda_{(M_p)}$  is surjective.
\end{itemize}
\item[$(b)$] $(M_p)_{p \in \N}$ satisfies $(\gamma_2)$ if and only if $\mathcal{B} : \mathcal{A}^{\{M_p\}}(\HH) \rightarrow \Lambda_{\{M_p\}}$ is surjective.
\item[$(c)$] $(M_p)_{p \in \N}$ satisfies $(\gamma_2)$ and $(\beta_2)$  if and only if $\mathcal{B} : \mathcal{A}^{\{M_p\}}(\HH) \rightarrow \Lambda_{\{M_p\}}$ has a continuous linear right inverse.
\end{itemize}
\end{theorem}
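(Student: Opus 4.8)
The plan is to deduce Theorem~\ref{main-theorem-2} from Theorem~\ref{main-theorem-1} via the Fourier(--Laplace) transform and Lemma~\ref{Fourier-char-supp}. First I would fix, once and for all, an auxiliary weight sequence $(A_p)_{p\in\N}$ satisfying $(\operatorname{lc})$, $(\operatorname{dc})$ and $(\gamma)$ (for instance $A_p=p!^3$), so that Theorem~\ref{main-theorem-1} applies to $\mathcal{S}^\dagger_\ast(0,\infty)$. It then suffices to show that $\mathcal{B}:\mathcal{A}^\ast(\HH)\to\Lambda_\ast$ is surjective (resp.\ has a continuous linear right inverse) if and only if $\mathcal{M}:\mathcal{S}^\dagger_\ast(0,\infty)\to\Lambda_\ast$ is. The bridge is the map $(\mathcal{L}\varphi)(z)=\int_0^\infty\varphi(x)e^{ixz}\dx$, $z\in\overline{\HH}$: since $\operatorname{supp}\varphi\subseteq[0,\infty)$, $\mathcal{L}\varphi$ extends $\widehat{\varphi}$ holomorphically to $\HH$, and $\widehat{\varphi}^{(p)}(0)=i^p\mu_p(\varphi)$ gives the intertwining $\mathcal{B}\circ\mathcal{L}=\tau\circ\mathcal{M}$, where $\tau:\Lambda_\ast\to\Lambda_\ast:(a_p)_{p\in\N}\mapsto(i^pa_p)_{p\in\N}$ is a topological isomorphism.

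For the sufficiency direction, the first step is to check that $\mathcal{L}$ maps $\mathcal{S}^\dagger_\ast(0,\infty)$ \emph{continuously into} $\mathcal{A}^\ast(\HH)$. Holomorphy of $\mathcal{L}\varphi$ on $\HH$ and continuity of all its derivatives on $\overline{\HH}$ follow by differentiation under the integral sign (using $|e^{ixz}|\leq 1$ on $\overline{\HH}$ for $x\geq 0$); and from $(\mathcal{L}\varphi)^{(p)}(z)=\int_0^\infty\varphi(x)(ix)^pe^{ixz}\dx$ one gets $\sup_{z\in\overline{\HH}}|(\mathcal{L}\varphi)^{(p)}(z)|\leq\int_0^\infty x^p|\varphi(x)|\dx$, which, splitting the integral at $x=1$ and using $(\operatorname{dc})$ to absorb the factor $M_{p+2}$, is dominated by $C(H/h)^pM_p$ times a seminorm of $\varphi$; hence $\mathcal{L}\varphi\in\mathcal{A}^\ast(\HH)$ with continuous dependence. (Equivalently, $\mathcal{L}(\mathcal{S}^\dagger_\ast(0,\infty))=\mathcal{F}(\mathcal{S}^\dagger_\ast(0,\infty))$ by Lemma~\ref{Fourier-char-supp}, and membership in $\mathcal{A}^\ast(\HH)$ follows from the Phragm\'en--Lindel\"of principle, each derivative being holomorphic on $\HH$, vanishing at $\infty$ and bounded on $\R$.) Then $\mathcal{L}\circ R\circ\tau^{-1}$ is a continuous linear right inverse of $\mathcal{B}$ whenever $R$ is one of $\mathcal{M}$, and likewise for surjectivity.

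For the necessity direction, let $f\in\mathcal{A}^\ast(\HH)$. As $f$ is the boundary value of a bounded holomorphic function on $\HH$, the tempered distribution $u_f:=\mathcal{F}^{-1}(f_{|\R})$ is supported in $[0,\infty)$ and satisfies $\langle u_f,x^p\rangle=(-i)^pf^{(p)}(0)$, with $|\langle u_f,x^p\rangle|\leq M_p/h^p$. Fixing a mollifier $\rho$ supported in $[0,\infty)$, lying in a suitable auxiliary Gelfand--Shilov space and with $\mu_k(\rho)=\delta_{0,k}$ for all $k$, a short computation using $\mu_k(\rho)=\delta_{0,k}$ gives $\mu_p(u_f*\rho)=\langle u_f,x^p\rangle$; on the Fourier side $\widehat{u_f*\rho}=f_{|\R}\cdot\widehat{\rho}$, where $f_{|\R}$ carries the class $(M_p)$ control of the derivatives while $\widehat{\rho}$ supplies the decay required by $\mathcal{S}^\dagger_\ast$, so that $f\mapsto u_f*\rho$ is a continuous linear map $\mathcal{A}^\ast(\HH)\to\mathcal{S}^\dagger_\ast(0,\infty)$ with $\mathcal{M}\circ(f\mapsto u_f*\rho)=\tau^{-1}\circ\mathcal{B}$. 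Thus surjectivity (resp.\ a continuous linear right inverse) of $\mathcal{B}$ transfers to $\mathcal{M}$, and Theorem~\ref{main-theorem-1} finishes the proof. An alternative, independent route to the necessity of $(\gamma_2)$ --- and of $(\beta_2)$ in the right-inverse case --- proceeds directly, in the spirit of the proof of Proposition~\ref{moment-flat}$(iii)\Rightarrow(i)$: from surjectivity (resp.\ a right inverse) of $\mathcal{B}$ one derives, through the $2$-interpolating sequence $(N_p)_{p\in\N}$ and a cut-off argument together with a ``Taylor-polynomial versus global Cauchy estimate'' (Watson-type) comparison on $\HH$, surjectivity (resp.\ a right inverse) of $\mathcal{B}:\mathcal{D}^{(N_p)}_{[-1,1]}\to\Lambda_{(N_p)}$, whence Petzsche's Theorem~\ref{Borel-P} and Lemma~\ref{2-interpolating} give the claim. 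In part $(a)$ the equivalence of surjectivity and of the existence of a continuous linear right inverse is moreover automatic, as $\mathcal{A}^{(M_p)}(\HH)$ and $\Lambda_{(M_p)}$ are Fr\'echet (indeed $(FS)$-)spaces, cf.\ Remark~\ref{Schwartz}.

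The main obstacle is the necessity direction --- concretely, guaranteeing that $u_f*\rho$ genuinely lands in $\mathcal{S}^\dagger_\ast(0,\infty)$ (and not merely in a larger Gelfand--Shilov space), which forces a careful choice of the weight sequences of $\rho$ relative to $(M_p)$ and $(A_p)$ and a Leibniz-rule estimate of the convolution, or else recourse to the direct Watson-type argument sketched above; the sufficiency direction, by contrast, reduces to Theorem~\ref{main-theorem-1} once the continuity of $\mathcal{L}$ into $\mathcal{A}^\ast(\HH)$ has been recorded.
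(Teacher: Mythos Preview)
Your sufficiency direction coincides with the paper's: Proposition~\ref{equivalences}, implication $(ii)\Rightarrow(iii)$, uses precisely the Laplace transform $\mathcal{L}\varphi(z)=\int_0^\infty\varphi(x)e^{ixz}\dx$ and the intertwining $\mathcal{B}\circ\mathcal{L}=\tau\circ\mathcal{M}$ to pass from the Stieltjes moment problem to the asymptotic Borel mapping.

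For the necessity direction your mollifier approach is very close in spirit to the paper's proof of $(iii)\Rightarrow(i)$ in Proposition~\ref{equivalences}, but the specific requirement $\mu_k(\rho)=\delta_{0,k}$ creates a circularity. That condition forces $\widehat{\rho}$ to have a flat (non-trivial) jet at the origin, so $\widehat{\rho}$ cannot be real-analytic; its derivatives must grow at least like a non-quasianalytic class $(D_p)$. For the Leibniz estimate on $(f\widehat{\rho})^{(p)}$ to place $u_f*\rho$ in $\mathcal{S}^{(A_p)}_{(M_p)}(0,\infty)$ you need $(D_p)\preceq(M_p)$, while the very existence of $\rho\in\mathcal{S}_{(D_p)}(0,\infty)$ with those moments requires, by Theorem~\ref{main-theorem}, that $(D_p)$ satisfy $(\gamma_2)$. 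Since you are trying to \emph{prove} $(\gamma_2)$ for $(M_p)$, you have no right to such a $(D_p)$; concretely, for $M_p=p!^{3/2}$ no admissible $\rho$ exists. The paper avoids this by dropping the moment condition: it takes $G$ holomorphic on $\{\Im z>-1\}$ with $|G^{(q)}(x)|e^{A(h|x|)}\le C\,2^qq!$ (constructed in \cite[Lemma~3.1]{D-J-S}); the analytic $q!$-bounds are absorbed into $M_p$ once $(\gamma)$ has been established via Denjoy--Carleman, and the resulting mismatch between $(fG)^{(p)}(0)$ and $f^{(p)}(0)$ is undone by a continuous automorphism $S$ of $\Lambda_\ast$ built from the Taylor coefficients of $1/G$. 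Your scheme becomes correct if you relax $\mu_k(\rho)=\delta_{0,k}$ to $\widehat{\rho}(0)\neq 0$ with $\widehat{\rho}$ holomorphic across $\R$, and compensate on $\Lambda_\ast$ --- which is exactly the paper's argument.

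Your alternative route is too vague. The cut-off argument only yields surjectivity of $\mathcal{B}:\mathcal{D}^{(M_p)}_{[-1,1]}\to\Lambda_{(M_p)}$ and hence, via Theorem~\ref{Borel-P}, merely $(\gamma_1)$; the passage to the $2$-interpolating sequence and to $(\gamma_2)$ is not explained, and a ``Watson-type comparison'' by itself does not supply it.
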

\begin{remark}\label{details}
Theorem \ref{main-theorem-2} improves various results from \cite{S-V, Thilliez, JG-S-S} in the special case of the upper half-plane:
The implication $(i) \Rightarrow (iii)$ from Theorem \ref{main-theorem-1}$(a)$ was shown in \cite[Cor.\ 3.4.1]{Thilliez} under the assumptions $\operatorname{(slc)}$ and $\operatorname{(mg)}$; the existence of a continuous linear right inverse of $\mathcal{B} : \mathcal{A}^{(M_p)}(\HH) \rightarrow \Lambda_{(M_p)}$ was shown in \cite[Thm.\ 4.4 and Thm.\ 4.5]{S-V} under the assumptions $\operatorname{(lc)}$ and $(\gamma_3)$; the direct implication of Theorem \ref{main-theorem-2}$(b)$ was shown in \cite[Thm.\ 3.2.1]{Thilliez} under the assumptions $\operatorname{(slc)}$ and $\operatorname{(mg)}$, while the converse implication was shown in \cite[Thm.\ 4.14]{JG-S-S} under the assumptions $\operatorname{(slc)}$ and $\operatorname{(dc)}$;  the existence of a continuous linear right inverse of $\mathcal{B} : \mathcal{A}^{\{M_p\}}(\HH) \rightarrow \Lambda_{\{M_p\}}$ was shown in \cite[Thm.\ 5.4 and Thm.\ 5.5]{S-V} under the assumptions $\operatorname{(lc)}$, $(\gamma_3)$ and $(\beta_2)$.
\end{remark}
In view of Theorem \ref{main-theorem}, Theorems \ref{main-theorem-1} and \ref{main-theorem-2} are both consequences of the following result; it is essentially shown in \cite[Sect.\ 3]{D-J-S}, but we repeat the argument here for the sake of completeness.

\begin{proposition} \label{equivalences} Let $(M_p)_{p \in \N}$ be a weight sequence satisfying $(\operatorname{lc})$ and $(\operatorname{dc})$, and let $(A_p)_{p \in \N}$ be a weight sequence satisfying $(\operatorname{lc})$ and $(\gamma)$. 
\begin{itemize}
\item[$(a)$] The following statements are equivalent:
\begin{itemize}
\item[$(i)$] $\mathcal{M} : \mathcal{S}^{(A_p)}_{(M_p)}(0,\infty) \rightarrow \Lambda_{(M_p)}$ is surjective (has a continuous linear right inverse).
\item[$(ii)$] $\mathcal{M} : \mathcal{S}_{(M_p)}(0,\infty) \rightarrow \Lambda_{(M_p)}$ is surjective (has a continuous linear right inverse).
\item[$(iii)$] $\mathcal{B} : \mathcal{A}^{(M_p)}(\HH) \rightarrow \Lambda_{(M_p)}$ is surjective (has a continuous linear right inverse).
\end{itemize}
\item[$(b)$] The following statements are equivalent:
\begin{itemize}
\item[$(i)$] $\mathcal{M} : \mathcal{S}^{\{A_p\}}_{\{M_p\}}(0,\infty) \rightarrow \Lambda_{\{M_p\}}$ is surjective (has a continuous linear right inverse).
\item[$(ii)$] $\mathcal{M} : \Si_{\{M_p\}}(0,\infty) \rightarrow \Lambda_{\{M_p\}}$ is surjective (has a continuous linear right inverse).
\item[$(iii)$] $\mathcal{B} : \mathcal{A}^{\{M_p\}}(\HH) \rightarrow \Lambda_{\{M_p\}}$ is surjective (has a continuous linear right inverse).
\end{itemize}
\end{itemize}
\end{proposition}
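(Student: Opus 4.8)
The plan is to prove, in each of the two cases, the circle of implications $(i)\Rightarrow(ii)\Rightarrow(iii)\Rightarrow(i)$, with ``surjective'' mapped to ``surjective'' and ``continuous linear right inverse'' mapped to ``continuous linear right inverse'' throughout. The implication $(i)\Rightarrow(ii)$ is immediate: $\mathcal{S}^{(A_p)}_{(M_p)}(0,\infty)$ is continuously included in $\mathcal{S}_{(M_p)}(0,\infty)$, and $\mathcal{S}^{\{A_p\}}_{\{M_p\}}(0,\infty)$ in $\Si_{\{M_p\}}(0,\infty)$ (one simply throws away the $A_q$-control of the high order derivatives); composing a continuous linear right inverse of $\mathcal{M}$ on the smaller space with the inclusion yields one on the larger space, and surjectivity transfers the same way.

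For $(ii)\Rightarrow(iii)$ I would use the Fourier--Laplace transform. If $\varphi\in\mathcal{S}_\ast(0,\infty)$ (resp.\ $\varphi\in\Si_{\{M_p\}}(0,\infty)$), set $\Phi\varphi(z):=\int_0^\infty\varphi(t)e^{itz}\,\dt$ for $z\in\HH$. Differentiation under the integral sign is justified since $|\varphi(t)e^{itz}|\le|\varphi(t)|$ on $\overline{\HH}$, so $\Phi\varphi$ is holomorphic on $\HH$, continuous on $\overline{\HH}$, with boundary values $\widehat\varphi|_{\R}$ and $(\Phi\varphi)^{(p)}(z)=\int_0^\infty(it)^p\varphi(t)e^{itz}\,\dt$; bounding $\int_0^\infty t^p|\varphi(t)|\,\dt\le\tfrac{\pi}{2}\sup_{t\ge0}(1+t^2)t^p|\varphi(t)|$ and using $(\operatorname{dc})$ to absorb the shift $p\mapsto p+2$, one checks that $\Phi:\mathcal{S}_\ast(0,\infty)\to\mathcal{A}^\ast(\HH)$ (resp.\ $\Si_{\{M_p\}}(0,\infty)\to\mathcal{A}^{\{M_p\}}(\HH)$) is well defined and continuous. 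Since $\widehat\varphi^{(p)}(0)=i^p\mu_p(\varphi)$, we obtain $\mathcal{B}\circ\Phi=D\circ\mathcal{M}$, where $D:\Lambda_\ast\to\Lambda_\ast$, $(a_p)_p\mapsto(i^pa_p)_p$, is a topological isomorphism. Hence surjectivity of $\mathcal{M}$ yields surjectivity of $\mathcal{B}\circ\Phi=D\circ\mathcal{M}$, so of $\mathcal{B}$, and a continuous linear right inverse $R$ of $\mathcal{M}$ produces the continuous linear right inverse $\Phi\circ R\circ D^{-1}$ of $\mathcal{B}$.

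The implication $(iii)\Rightarrow(i)$ is the heart of the matter; here I would follow the approach of \cite[Sect.\ 3]{D-J-S}. By Lemma \ref{Fourier-char-supp}, $\mathcal{F}(\mathcal{S}^\dagger_\ast(0,\infty))$ is precisely the set of $\psi\in\mathcal{S}_\dagger^\ast(\R)$ that extend to a function holomorphic on $\HH$, continuous on $\overline{\HH}$, and tending to $0$ at infinity in $\overline{\HH}$. Fix a function $\varphi_0\in\mathcal{S}^\dagger_\ast(0,\infty)$ with $\mu_0(\varphi_0)\neq0$ and $\mu_p(\varphi_0)=0$ for all $p\ge1$, and put $\Psi_0:=\mathcal{F}(\varphi_0)$; by Lemma \ref{Fourier-char-supp}, $\Psi_0$ extends holomorphically to $\HH$, is continuous on $\overline{\HH}$, vanishes at infinity, and satisfies $\Psi_0(0)=\mu_0(\varphi_0)\neq0$ and $\Psi_0^{(p)}(0)=0$ for $p\ge1$. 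I claim that $\Theta:f\mapsto\mathcal{F}^{-1}(\Psi_0\cdot f)$ is a well defined continuous linear map $\mathcal{A}^\ast(\HH)\to\mathcal{S}^\dagger_\ast(0,\infty)$: the product $\Psi_0f$ is holomorphic on $\HH$, continuous on $\overline{\HH}$, and vanishes at infinity because $\Psi_0$ does and $f$ is bounded on $\HH$, while the Leibniz rule together with the log-convexity inequality $M_aM_b\le M_{a+b}$ (a consequence of $(\operatorname{lc})$, which yields $\sum_{k\le q}\binom{q}{k}M_kM_{q-k}\le2^qM_q$) shows that $\Psi_0f\in\mathcal{S}_\dagger^\ast(\R)$ with the required seminorm estimates; by Lemma \ref{Fourier-char-supp} this places $\Psi_0f$ in $\mathcal{F}(\mathcal{S}^\dagger_\ast(0,\infty))$, and the same estimates give continuity. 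Since $\Psi_0$ is flat at $0$, the Leibniz rule collapses to $\mu_p(\Theta f)=\Psi_0(0)\cdot\bigl(D^{-1}\mathcal{B}f\bigr)_p$; as $\Psi_0(0)\neq0$ and $D$ is an isomorphism, surjectivity of $\mathcal{B}$ gives surjectivity of $\mathcal{M}$ on $\mathcal{S}^\dagger_\ast(0,\infty)$, and a continuous linear right inverse $R$ of $\mathcal{B}$ produces the continuous linear right inverse $\Psi_0(0)^{-1}\,\Theta\circ R\circ D$ of $\mathcal{M}$ on $\mathcal{S}^\dagger_\ast(0,\infty)$.

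I expect the main obstacle to be the existence of the ``flat frequency cutoff'' $\varphi_0$, i.e.\ of a function in $\mathcal{S}^\dagger_\ast(0,\infty)$ all of whose moments of positive order vanish while $\mu_0\neq0$ --- equivalently, of a function holomorphic on $\HH$, continuous and rapidly decreasing on $\overline{\HH}$, of class $\mathcal{S}_\dagger^\ast$ on $\R$, and differing from a nonzero constant only by a term flat at $0$. This is exactly where the assumptions $(\operatorname{lc})$, $(\operatorname{dc})$ on $(M_p)$ and $(\operatorname{lc})$, $(\gamma)$ on $(A_p)$ are genuinely used; one either constructs $\Psi_0$ explicitly as in \cite[Sect.\ 3]{D-J-S}, or, in the degenerate situation where no such $\varphi_0$ exists, one verifies separately that $(i)$, $(ii)$ and $(iii)$ all fail (so the equivalences hold vacuously), a check that can also be read off from the already proved Theorem \ref{main-theorem}. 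All the remaining steps are routine bookkeeping with the Fourier transform and the isomorphism $D$; no completeness or further functional-analytic input is needed here, since $\Phi$, $\Theta$ and $D^{\pm1}$ are continuous and linear.
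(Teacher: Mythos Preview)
Your implications $(i)\Rightarrow(ii)$ and $(ii)\Rightarrow(iii)$ coincide with the paper's argument. The gap is in $(iii)\Rightarrow(i)$: the existence of your ``flat frequency cutoff'' $\varphi_0\in\mathcal{S}^\dagger_\ast(0,\infty)$ with $\mu_0(\varphi_0)\neq0$ and $\mu_p(\varphi_0)=0$ for $p\ge1$ is precisely the statement that the sequence $(1,0,0,\ldots)$ lies in the image of $\mathcal{M}:\mathcal{S}^\dagger_\ast(0,\infty)\to\Lambda_\ast$, i.e.\ a special case of $(i)$. You cannot assume it. Your reference to \cite[Sect.~3]{D-J-S} does not help: what is constructed there (and what the paper uses) is a function $G$ holomorphic and \emph{non-vanishing} on a half-plane with $|G(z)|e^{A(h|z|)}$ bounded and $|G^{(p)}(x)|\le C\,2^pp!\,e^{-A(h|x|)}$; there is no flatness of $G$ at $0$, and none can be expected, since a function holomorphic near $0$ with all derivatives of positive order vanishing would be constant. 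Your fallback (``if no $\varphi_0$ exists then $(i)$, $(ii)$, $(iii)$ all fail, by Theorem~\ref{main-theorem}'') also does not close: Theorem~\ref{main-theorem} characterises $(ii)$ by $(\gamma_2)$, but the non-existence of $\varphi_0$ in the \emph{smaller} space $\mathcal{S}^\dagger_\ast(0,\infty)$ says nothing about $(\gamma_2)$ or about the surjectivity of $\mathcal{B}$ on $\mathcal{A}^\ast(\HH)$.

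The paper's fix is exactly to give up on flatness and compensate on the sequence side. One takes the non-flat multiplier $G$ above (its existence uses only $(\operatorname{lc})$ and $(\gamma)$ for $(A_p)$, after a harmless reduction to $(A_p)$ satisfying $(\operatorname{dc})$), and defines $T:\mathcal{A}^\ast(\HH)\to\mathcal{S}^\dagger_\ast(0,\infty)$ by $T(f)=\mathcal{F}^{-1}((fG)_{|\R})$; your Leibniz/Lemma~\ref{Fourier-char-supp} argument applies verbatim to show $T$ is continuous. Since $(fG)^{(p)}(0)=\sum_{n\le p}\binom{p}{n}f^{(n)}(0)G^{(p-n)}(0)$, one pre-inverts this triangular convolution by the continuous map $S:\Lambda_\ast\to\Lambda_\ast$, $S(a)_p=\sum_{n\le p}\binom{p}{n}i^na_n(1/G)^{(p-n)}(0)$; here one first deduces $(\gamma)$ for $(M_p)$ from $(iii)$ (via Denjoy--Carleman, as in the paper), so that $p!/M_p\to0$ and the Cauchy estimates $|(1/G)^{(p)}(0)|\le C\,2^pp!$ place $S$ in $L(\Lambda_\ast,\Lambda_\ast)$. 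Then $T\circ R\circ S$ is the desired right inverse. In short, replace your hypothetical flat $\Psi_0$ by the explicitly available $G$, and move the ``flatness'' to a sequence-level inversion.
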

\begin{proof}
We only show the equivalences about the existence of a continuous linear right inverse stated in $(a)$; all other cases follow from a similar argument. 

$(i) \Rightarrow (ii)$ This follows from the continuous inclusion $\mathcal{S}^{(A_p)}_{(M_p)}(0,\infty) \subset \mathcal{S}_{(M_p)}(0,\infty)$.

$(ii) \Rightarrow (iii)$ We define the Laplace transform of an element $\varphi \in  \mathcal{S}_{(M_p)}(0,\infty)$ as
$$
\mathcal{L}(\varphi)(z) := \int_{0}^\infty \varphi(t) e^{itz} \dt, \qquad z \in \HH.
$$
$\mathcal{L}:  \mathcal{S}_{(M_p)}(0,\infty) \rightarrow \mathcal{A}^{(M_p)}(\HH)$ is well-defined and continuous, and $\mathcal{L}(\varphi)^{(p)}(0) = i^p \mu_p(\varphi)$ for all $p \in \N$. Let $R: \Lambda_{(M_p)} \rightarrow  \mathcal{S}_{(M_p)}(0,\infty)$ be a continuous linear right inverse of $\mathcal{M} : \mathcal{S}_{(M_p)}(0,\infty) \rightarrow \Lambda_{(M_p)}$. Consider the continuous mapping 
$$
T: \Lambda_{(M_p)}  \rightarrow \Lambda_{(M_p)}, \quad T((a_p)_{p \in \N}) = ((-i)^pa_p)_{p \in \N}.
$$ 
Then, $\mathcal{L} \circ R \circ T: \Lambda_{(M_p)} \rightarrow \mathcal{A}^{(M_p)}(\HH)$ is a continuous linear right inverse of $\mathcal{B} : \mathcal{A}^{(M_p)}(\HH) \rightarrow \Lambda_{(M_p)}$.

$(iii) \Rightarrow (i)$ We start by making some preliminary observations about the weight sequences $(M_p)_{p \in \N}$ and $(A_p)_{p \in \N}$. By  \cite[Lemma 2.2]{D-J-S}, we may assume without loss of generality that $(A_p)_{p \in \N}$ satisfies $(\operatorname{dc})$. Next,  choose $f \in \mathcal{A}^{(M_p)}(\HH)$ such that $f^{(p)}(0) = \delta_{1,p}$ for all $p \in \N$. Set $\varphi(x) = f(x) - x$ for $x \in \R$. Then,
$$
\sup_{p \in \N} \sup_{x \in [-R,R]} \frac{h^p|\varphi^{(p)}(x)|}{M_p} < \infty
$$ 
for all $h > 0$ and $R > 0$, and $\varphi^{(p)}(0) = 0$ for all $p \in \N$. Since $f$ is bounded, $\varphi$ is not identically zero. Hence, the Denjoy-Carleman theorem implies that $(M_p)_{p \in \N}$ satisfies $(\gamma)$. Consequently, we have that  $\lim_{p \to \infty} (p!/M_p)^{1/p} = 0$ \cite[Lemma 4.1]{Komatsu}. We now turn to the actual proof. It is based on the following observation \cite[Lemma 3.6]{D-J-S}: Let $(a_p)_{p \in \N} \in \C^\N$ and let $G \in C^\infty((-\delta,\delta))$, $\delta > 0$, with $G(0) \neq 0$. Set
$$
b_p = \sum_{n = 0}^p \binom{p}{n} a_n \left ( \frac{1}{G} \right)^{(p-n)} (0), \qquad p \in \N.
$$
Then,
$$
\sum_{n=0}^p \binom{p}{n} b_n G^{(p-n)}(0) = a_p, \qquad p\in \N.
$$
Set $V= \{ z \in \C \, | \, \Im m z > -1 \}$. By  \cite[Lemma 3.1]{D-J-S} and \cite[Lemma 4.3]{Komatsu}, there is $G \in \mathcal{O}(V)$ satisfying the following properties: 
\begin{itemize}
\item[$(i)$] $G$ does not vanish on $V$.
\item[$(ii)$] $\displaystyle \sup_{z \in V} |G(z)|e^{{A}(h|z|)} < \infty$ for all $h > 0$.
\item[$(iii)$] $\displaystyle \sup_{p \in \N}\sup_{x \in \R} \frac{|G^{(p)}(x)|e^{A(h|x|)}}{2^pp!} < \infty$ for all $h > 0$.
\end{itemize}
Lemma \ref{Fourier-char-supp} implies that the mapping $T: \mathcal{A}^{(M_p)}(\HH) \rightarrow \mathcal{S}^{(A_p)}_{(M_p)}(0,\infty): f \rightarrow  \mathcal{F}^{-1}((fG)_{|\R})$ is well-defined. Since $\mathcal{F}:  \mathcal{S}^{(A_p)}_{(M_p)}(\R) \rightarrow  \mathcal{S}_{(A_p)}^{(M_p)}(\R)$ is a topological isomorphism and \eqref{DFS} holds topologically, $T$ is also continuous.  Next, the Cauchy estimates yield that 
$$
\sup_{p \in \N} \frac{|(1/G)^{(p)}(0)|}{2^p p!}< \infty.
$$ 
Hence, the mapping
$$
S: \Lambda_{(M_p)} \rightarrow \Lambda_{(M_p)}, \quad S(a) = \left( \sum_{n = 0}^p \binom{p}{n} i^na_n \left ( \frac{1}{G} \right)^{(p-n)} (0) \right)_{p \in \N},
$$
is well-defined and continuous.  Let $R: \Lambda_{(M_p)} \rightarrow  \mathcal{A}^{(M_p)}(\HH)$ be a continuous linear right inverse of $\mathcal{B} : \mathcal{A}^{(M_p)}(\HH) \rightarrow \Lambda_{(M_p)}$. Then, $T \circ R \circ S : \Lambda_{(M_p)} \rightarrow \mathcal{S}^{(A_p)}_{(M_p)}(0,\infty)$ is a continuous linear right inverse of $\mathcal{M} : \mathcal{S}^{(A_p)}_{(M_p)}(0,\infty) \rightarrow \Lambda_{(M_p)}$.
\end{proof}

\end{document}